\newtheorem{thm}{Theorem}[section]
\newtheorem{lem}[thm]{Lemma}
\newtheorem{cor}[thm]{Corollary}
\newtheorem{prop}[thm]{Proposition}
\newtheorem{prob}[thm]{Problem}
\newtheorem*{clm}{Claim}
\newtheorem{thme}[]{Theorem}
\theoremstyle{definition}
\newtheorem{defn}[thm]{Definition}
\newtheorem{rem}[thm]{Remark}
\newtheorem{exam}[thm]{Example}
\numberwithin{equation}{section}
\DeclareMathOperator{\Ad}{Ad}
\DeclareMathOperator{\Aut}{Aut}
\DeclareMathOperator{\ch}{ch}
\DeclareMathOperator{\diag}{diag}
\DeclareMathOperator{\End}{End}
\DeclareMathOperator{\ev}{ev}
\DeclareMathOperator{\Hom}{Hom}
\DeclareMathOperator{\id}{id}
\DeclareMathOperator{\Ima}{Im}
\DeclareMathOperator{\Ind}{Ind}
\DeclareMathOperator{\Int}{Int}
\DeclareMathOperator{\oInt}{\overline{\Int}}
\DeclareMathOperator{\Irr}{Irr}
\DeclareMathOperator{\mo}{mod}
\DeclareMathOperator{\Rep}{Rep}
\DeclareMathOperator{\Sect}{Sect}
\DeclareMathOperator{\Sp}{Sp}
\DeclareMathOperator{\spa}{span}
\DeclareMathOperator{\tr}{tr}
\DeclareMathOperator{\Tr}{Tr}
\DeclareMathOperator{\Wt}{Wt}
\def\C{\mathbb{C}}
\def\N{\mathbb{N}}
\def\Q{\mathbb{Q}}
\def\R{\mathbb{R}}
\def\Z{\mathbb{Z}}
\def\bG{\mathbb{G}}
\def\bH{\mathbb{H}}
\def\bK{\mathbb{K}}
\def\cA{\mathcal{A}}
\def\cL{\mathcal{L}}
\def\cM{\mathcal{M}}
\def\cN{\mathcal{N}}
\def\cP{\mathcal{P}}
\def\cQ{\mathcal{Q}}
\def\cR{\mathcal{R}}
\def\sG{\mathscr{G}}
\def\sH{\mathscr{H}}
\def\al{\alpha}
\def\bal{{\overline{\al}}}
\def\be{\beta}
\def\ga{\gamma}
\def\de{\delta}
\def\ka{\kappa}
\def\la{\lambda}
\def\vep{\varepsilon}
\def\ph{{\phi}}
\def\ps{{\psi}}
\def\vph{\varphi}
\def\ovph{{\overline{\vph}}}
\def\om{\omega}
\def\si{\sigma}
\def\ta{\tau}
\def\th{\theta}
\def\De{\Delta}
\def\Ga{\Gamma}
\def\La{\Lambda}
\def\Om{\Omega}
\def\col{\colon}
\def\nin{\notin}
\def\ra{\rightarrow}
\def\subs{\subset}
\def\ovl{\overline}
\def\oti{\otimes}
\def\rti{\rtimes}
\def\CG{C(G_q)}
\def\lG{L^\infty(G_q)}
\def\CT{C(T)}
\def\CTG{C(T\backslash G_q)}
\def\lTG{L^\infty(T\backslash G_q)}
\def\RG{R(\bG)}
\def\Uqg{U_q(\mathfrak{g})}
\title{Product type actions of $G_q$}
\author[R. Tomatsu]{Reiji Tomatsu}
\address{
Department of Mathematics, Hokkaido University,
Hokkaido\\ \indent \mbox{060-0810},
JAPAN}
\email{tomatsu@math.sci.hokudai.ac.jp}
\subjclass[2000]{46L55, 20G42, 17B37}
\begin{document}
\maketitle

\begin{abstract}
We will study a faithful product type action
of $G_q$, the $q$-deformation
of a connected semisimple compact Lie group $G$,
and prove that such an action
is induced from a minimal action
of the maximal torus $T$ of $G_q$.
This enables us to classify product type actions
of $SU_q(2)$ up to conjugacy.
We also compute
the intrinsic group of $G_{q,\Omega}$,
the 2-cocycle deformation of $G_q$
that is naturally associated with
the quantum flag manifold $L^\infty(T\backslash G_q)$.
\end{abstract}

\section{Introduction}
In this paper,
we will study a product type action
of a $q$-deformed compact quantum group.
Theory of a quantum group was initiated by
Drinfel'd and Jimbo \cite{D,J}.
They have introduced the quantum group
$U_q(\mathfrak{g})$,
the $q$-deformation of
the enveloping algebra of a Kac--Moody Lie algebra
$\mathfrak{g}$.
In the operator algebraic approach,
Woronowicz has defined $SU_q(N)$
and introduced the concept of
a compact quantum group \cite{W,W-pseudo,W-SU(N),W-cpct}
by deforming a function algebra.
We can construct
the $q$-deformed compact quantum group $G_q$
for a connected semisimple compact Lie group $G$
using $U_q(\mathfrak{g})$
for a finite dimensional simple $\mathfrak{g}$
(see \cite{KS,FRT}).

Let us consider a product type action of $G_q$
on a uniformly hyperfinite C$^*$-algebra.
Such an action
has been studied by Konishi--Nagisa--Watatani \cite{KNW}.
They have shown that
the fixed point algebra of the product type action
of $SU_q(2)$ with respect to the spin-$1/2$ irreducible
representation
is generated by certain Jones projections.
In particular, if we take its weak closure,
then the product type action is never minimal.

In \cite{Iz-Poisson},
Izumi has elucidated this interesting phenomenon
by introducing the concept of a (non-commutative) Poisson boundary.
Namely,
he has constructed a von Neumann algebra from
a random walk on the dual discrete quantum group
of a compact quantum group,
and shown that is isomorphic to the relative commutant
of a fixed point algebra inside an infinite tensor product
factor of matrix algebras.
Since this pioneering work,
the program of realization
of Poisson boundaries has been carried out
in several papers \cite{Iz-Poisson,INT,T-Poisson,VV0,VV}.
In particular, it is known that
the Poisson boundary of $G_q$ is isomorphic to
the quantum flag manifold $T\backslash G_q$
\cite{Iz-Poisson,INT,T-Poisson}.

On the center of a Poisson boundary
as a von Neumann algebra,
it has been conjectured in \cite{T-Banach}
that
the center could coincide with
the classical part of the Poisson boundary,
that is,
the center could come exactly from the random walk
on the von Neumann algebraic
center of the dual discrete quantum group.
Indeed, it is well-known for experts
that this is the case
to $SU_q(2)$.
Also for universal quantum groups
$A_o(F)$ and $A_u(F)$,
the conjecture has been affirmatively solved
\cite{TV,VV0,VV}.
We will show the following result
which states that the conjecture also holds
for every $G_q$ (Theorem \ref{thm:factor}).

\begin{thme}
\label{thm:intro-factor}
The von Neumann algebra
$L^\infty(T\backslash G_q)$
is a factor of type I.
\end{thme}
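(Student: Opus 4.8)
The plan is to prove the two parts of the statement --- type I and factoriality --- by rather different arguments, the second being the substantive one. The type I part is C$^*$-algebraic: $\CTG$ is a type I C$^*$-algebra (for $SU_q(2)$ it is the extension of $C(\mathbb{T})$ by the compact operators; in general this is part of the structure theory of quantum flag manifolds, following from the stratification of $T\backslash G_q$ into finitely many Schubert-type cells), and a type I C$^*$-algebra generates a type I von Neumann algebra in every representation, in particular in the GNS representation of the restriction of the Haar state to $\CTG$. Hence $\lTG$ is a type I von Neumann algebra, and it remains only to prove that it is a factor; in view of the remarks in the introduction this is exactly the conjecture of \cite{T-Banach} in the case of $G_q$.

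For factoriality I would use the identification of $\lTG$ with the noncommutative Poisson boundary $H^\infty(\hG_q,\psi)$ of a generating random walk $\psi$ on the dual discrete quantum group $\hG_q$ \cite{Iz-Poisson,INT,T-Poisson}, together with the Poisson integral $\Theta\col H^\infty(\hG_q,\psi)\hookrightarrow\ell^\infty(\hG_q)=\prod_\lambda B(H_\lambda)$, a normal unital complete order embedding that is equivariant for the right translation action of $G_q$. Write $Z=\mathcal{Z}(\lTG)$; since the translation action on $\lTG$ is ergodic, it restricts to an ergodic action on the $G_q$-invariant abelian algebra $Z$. The argument then splits into two steps. \emph{Step 1:} show $\Theta(Z)\subs\mathcal{Z}(\ell^\infty(\hG_q))=\ell^\infty(\Irr(G_q))$, i.e.\ that every central harmonic element is a scalar on each block $B(H_\lambda)$; equivalently, $Z$ is contained in the classical part of the boundary. \emph{Step 2:} the transition operator restricts on $\ell^\infty(\Irr(G_q))\cong\ell^\infty(P^+)$ to an honest Markov operator whose chain, for $q\neq1$, is transient with a nonzero drift pointing into the interior of the dominant Weyl chamber (because the quantum dimensions weighting the transition probabilities grow exponentially in $\lambda$); away from the walls this chain is essentially a translation-invariant random walk on the weight lattice, which is Liouville by Choquet--Deny, and a coupling argument across the walls takes care of the rest, so the chain admits no nonconstant bounded harmonic function. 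Combining the two steps, $\Theta(Z)$ consists of harmonic elements lying in $\ell^\infty(P^+)$, hence of scalars, so $Z=\C$ and $\lTG$ is a factor.

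The main obstacle is Step 1. The difficulty is that the product of $H^\infty(\hG_q,\psi)$ is the Choi--Effros product, which is \emph{not} the restriction of the product of $\ell^\infty(\hG_q)$, so centrality in $H^\infty(\hG_q,\psi)$ does not translate directly into a commutation relation inside $\ell^\infty(\hG_q)$. I expect to circumvent this using $G_q$-equivariance together with the explicit form of $\Theta$ for a $q$-deformation, which is built from the Drinfel'd (ribbon) element and the universal $R$-matrix of $\Uqg$: equivariance confines $\Theta(Z)$ to the part of $\ell^\infty(\hG_q)$ on which the translation action is as trivial as possible, and pairing against the spectral subspaces of the flag-manifold action then reduces matters, block by block, to the statement that a $T$-invariant harmonic element supported on a single $B(H_\lambda)$ must be scalar --- which is again the classical input of Step 2. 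If the equivariance route proves unwieldy, an alternative is to work instead with Izumi's realisation of $\lTG$ as the relative commutant $P'\cap N$ of the fixed point algebra $P=N^{G_q}$ inside an infinite tensor product factor $N$ carrying a product type $G_q$-action \cite{Iz-Poisson}, and to analyse the center of that relative commutant directly, using that the fusion graph of $G$ --- connected because the representation ring of $G$ is generated by the fundamental representations --- together with the $q$-deformation forces the relevant random walk to be Liouville.
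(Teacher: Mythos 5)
Your argument for factoriality has a genuine gap at exactly the point you flag as ``the main obstacle.'' Step 1 --- that $\Theta$ maps $Z(\lTG)$ into the classical part $\ell^\infty(\Irr(G_q))$ of $\ell^\infty(\widehat{G}_q)$ --- is precisely the conjecture of \cite{T-Banach} that this theorem is meant to settle (for $G_q$), and neither of your two proposed routes around the Choi--Effros product issue is an argument: ``equivariance confines $\Theta(Z)$ to the part on which the translation action is as trivial as possible'' does not identify that part, and the relative-commutant alternative just restates the problem. Step 2 (the Liouville property of the classical walk on $P_+$) is true but is itself a nontrivial piece of work that you only sketch; more importantly, the paper needs neither step. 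The actual proof sidesteps the conjecture entirely by an algebraic observation: from the Korogodski--Soibel'man commutation relations between the matrix coefficients $C^\la_{\xi_\mu,\xi_\nu}$ and $a_\La=C^\La_{\La,w_0\La}$, the polar parts $v_\La$ of the $a_\La$ are \emph{central} unitaries of $\lG$ on which the left torus action $\ga$ acts by the character $\langle\,\cdot\,,\La\rangle$; hence $\lG=Z(\lG)\vee\lTG$, so $Z(\lTG)\subseteq Z(\lG)\cap\lTG$. The intersection $Z(\lG)\cap\lTG$ is then killed in one line by the Poisson integral $\Theta(x)=(\id\oti h)(V^*(1\oti x)V)$: for $x\in Z(\lG)$ the element $1\oti x$ commutes with $V\in B(L^2(G_q))\oti\lG$, so $\Theta(x)$ is scalar, while $\Theta$ is injective on $\lTG$. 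If you want to salvage your plan you must either supply a proof of Step 1 (which would be a stronger result than the theorem) or find the decomposition $\lG=Z(\lG)\vee\lTG$; as written, the factoriality is not proved.

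The type I half of your proposal is essentially sound and is, if anything, more self-contained than the paper's (which defers to \cite[Theorem 4.7]{T-Banach}): $\CTG$ is a sub-C$^*$-algebra of $\CG$, which is type I by Soibel'man's classification, so every representation of $\CTG$ generates a type I von Neumann algebra. Two cautions, though. First, your description of the $SU_q(2)$ case is wrong: $C(T\backslash SU_q(2))$ is the standard Podle\'{s} sphere, i.e.\ the minimal unitization of $K(\ell^2)$ (its irreducible representations are indexed by $W=\{e,w_0\}$, so there is one character and one infinite-dimensional representation); the extension of $C(\mathbb{T})$ by the compacts you describe is rather the building block of $C(SU_q(2))$ itself. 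Second, if you justify type I-ness via the cell stratification of the flag manifold you are invoking Dijkhuizen--Stokman's classification, which the paper later \emph{derives from} this theorem; citing \cite{DS} as external input is legitimate, but the route through $\CG$ being type I avoids even the appearance of circularity.
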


Let us again
consider a product type action of $G_q$
on a factor $\cM$.
From Izumi's result and Theorem \ref{thm:intro-factor},
it turns out that the relative commutant
$\cQ:=(\cM^{G_q})'\cap\cM$ is the infinite dimensional
type I factor, where $\cM^{G_q}$ denotes
the fixed point algebra.
Thus we obtain the tensor product splitting
$\cM\cong\cR\oti \cQ$, where $\cR:=\cQ'\cap\cM$.
It is then shown that
the inclusion $\cM^{G_q}\subs \cR$ is irreducible
and of depth 2 (Lemma \ref{lem:depth2}).
Hence it arises from a minimal action
of a unique compact quantum group on $\cR$.
Actually, we will show that the compact quantum group
is nothing but the maximal torus $T$
(Theorem \ref{thm:depth2-torus}).
Hence we have a $T$-equivariant copy of $L^\infty(T)$
inside $\cR$.
Then it is natural to ask whether
this copy and $\cQ$ generate a von Neumann algebra
that is $G_q$-isomorphic to $\lG$.

To solve this question,
we need to show the triviality of
the $G_q$-equivariant automorphism on $\lTG$.
In \cite{S-repn},
Soibel'man has classified irreducible representations
of $\CG$ as a C$^*$-algebra.
Namely, it has become clear that
irreducible representations of $\CG$
are parametrized by
the maximal torus $T$
and
the Weyl group $W$ of $\mathfrak{g}$
as $\{\pi_{t,w}\}_{t\in T,w\in W}$.
Then Dijkhuizen--Stokman's result \cite[Theorem 5.9]{DS}
(Theorem \ref{thm:class-irr})
states that
any irreducible representation of $\CTG$ actually comes from
that of $\CG$.
This, in particular, implies that
the counit gives a unique character on $\CTG$,
and we obtain the triviality of the $G_q$-equivariant
automorphism group of $\CTG$
(Corollary \ref{cor:equiv}).

By using these results,
we study an $\lTG$-valued invariant cocycle.
Actually, it is shown that any such cocycle is a coboundary
with a unique solution in $Z(\lG)$
up to a scalar multiple
(Theorem \ref{thm:descript-coc}).
As an application,
we can show the following main result of this paper
(Theorem \ref{thm:induction}).

\begin{thme}
A faithful product type action
of $G_q$ is induced from a minimal
action of $T$ on a type III factor.
Moreover, such a minimal action is unique.
\end{thme}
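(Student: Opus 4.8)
\emph{Proof strategy.} Let $\al$ be a faithful product type action of $G_q$ on the factor $\cM$, an infinite tensor product of matrix algebras. Since $q\neq 1$ the Haar state of $G_q$ is non-tracial, so faithfulness of $\al$ forces $\cM$ to be of type III. The first step is the reduction sketched in the introduction. By Izumi's identification of the relative commutant $\cQ:=(\cM^{G_q})'\cap\cM$ with the Poisson boundary of $G_q$, together with the realisation of that boundary as $\lTG$ and with Theorem~\ref{thm:intro-factor}, the algebra $\cQ$ is a type $\mathrm{I}_\infty$ factor that is $G_q$-equivariantly isomorphic to $\lTG$; by Corollary~\ref{cor:equiv} there is no nontrivial $G_q$-equivariant automorphism of $\lTG$, so this isomorphism, hence the resulting embedding $\lTG\hookrightarrow\cM$, is canonical. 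Since $\cQ$ is a type I factor, $\cM$ splits $G_q$-equivariantly as $\cM\cong\cR\oti\cQ$ with $\cR:=\cQ'\cap\cM$; here $\cR$ and $\cQ$ are globally $\al$-invariant, $\cR$ is of type III, and $\cR^{G_q}=\cM^{G_q}$. By Lemma~\ref{lem:depth2} the inclusion $\cM^{G_q}\subs\cR$ is irreducible of depth $2$, hence the fixed-point inclusion for a minimal action of a unique compact quantum group, which by Theorem~\ref{thm:depth2-torus} is the maximal torus $T$; write $\be$ for this minimal action of $T$ on the type III factor $\cR$, noting that it provides a $T$-equivariant copy of $L^\infty(T)$ inside $\cR$.

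The heart of the proof is to promote the splitting $\cM\cong\cR\oti\cQ$ to a $G_q$-equivariant identification $(\cM,\al)\cong\Ind_T^{G_q}(\cR,\be)$. The plan is to view $\cM$, through the canonical copy of $\lTG$, as a $G_q$-system lying over the quantum flag manifold and to reconstruct it from its fibre $\cR=\cQ'\cap\cM$ by induction along $T\subs G_q$. Concretely this amounts to building inside $\cM$ a $G_q$-equivariant copy of $\lG$: the copy of $\lTG$ is at hand as $\cQ$, the $T$-equivariant copy of $L^\infty(T)$ lives in $\cR$, and one must check that together they generate $\lG$ with the correct relations, thereby answering the question raised in the introduction. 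The obstruction to doing this $G_q$-equivariantly is an $\lTG$-valued invariant $2$-cocycle; Theorem~\ref{thm:descript-coc} says every such cocycle is a coboundary, with the correcting unitary unique in $Z(\lG)$ up to a scalar, and absorbing this coboundary produces the copy of $\lG$ and with it the presentation of $\al$ as the induced action $\Ind_T^{G_q}(\cR,\be)$. The rigidity of $\CTG$ coming from Theorem~\ref{thm:class-irr} and Corollary~\ref{cor:equiv} is what forces the identifications; in particular the $T$-action recovered on the fibre is $\be$ itself, by the uniqueness clause of the depth-$2$ picture.

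For the uniqueness statement, observe that the construction is canonical in $(\cM,\al)$: one forms $\cM^{G_q}$, then $\cQ=(\cM^{G_q})'\cap\cM$, then $\cR=\cQ'\cap\cM$, and finally recovers $\be$ as the unique minimal $T$-action realising the irreducible depth-$2$ inclusion $\cM^{G_q}\subs\cR$ (Lemma~\ref{lem:depth2}, Theorem~\ref{thm:depth2-torus}), the copy of $\lTG$ used en route being pinned down by Corollary~\ref{cor:equiv}. The same procedure applied to $\Ind_T^{G_q}(\cR',\be')$, for $\be'$ any minimal action of $T$ on a type III factor $\cR'$, returns $(\cR',\be')$ up to conjugacy --- a formal consequence of the minimality of $\be'$. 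Hence any two presentations of $\al$ as an induced action have conjugate underlying $T$-systems, which is the asserted uniqueness.

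I expect the second step to be the main obstacle. Identifying precisely which cocycle governs the assembly of $\lTG$ and $L^\infty(T)$ into $\lG$ inside $\cM$, and checking that after the legitimate reductions it really is an $\lTG$-valued \emph{invariant} cocycle so that Theorem~\ref{thm:descript-coc} applies, is the delicate part; one must moreover ensure the resulting coboundary respects the $T$-structure on the fibre and not only the $G_q$-structure, as it is this compatibility that pins the minimal $T$-action down to $\be$. Here the classification of the irreducible representations of $\CTG$ in Theorem~\ref{thm:class-irr}, and the consequent description of $\CTG$-valued cocycles, are indispensable.
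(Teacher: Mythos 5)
Your outline follows the same skeleton as the paper (splitting $\cM\cong\cR\oti\cQ$ with $\cQ\cong\lTG$, the depth-$2$ inclusion $\cM^\al\subs\cR$, identification of the hidden quantum group as $T$, then reassembly as an induced action), but the central step is not actually carried out, and two of your working assumptions are off in ways that hide the real difficulty. First, $\cR=\cQ'\cap\cM$ is \emph{not} globally $\al$-invariant: if it were, $\al$ would restrict to a $G_q$-action on $\cR$ with fixed points $\cM^\al$, and the theorem would be nearly immediate. The whole point is that $\al$ mixes $\cR$ and $\cQ$; only a twisted ($G_{q,\Om}$-) action survives on $\cR$. Second, Theorem~\ref{thm:descript-coc} does not assert that some obstruction ``$2$-cocycle'' is a coboundary; it computes the group $Z^1_{\rm inv}(\de,\lTG)$ of invariant $1$-cocycles and shows it equals $\{w_\la\}_{\la\in P}\cong P$, where $w_\la=(v_\la^*\oti1)\de(v_\la)$ comes from the central unitaries $v_\la\in Z(\lG)$. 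Its role in the proof is to supply the complete list of cocycles, not to kill an obstruction.

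The missing heart of the argument is the following chain, which your proposal defers to ``the delicate part'': transport each $w_\la$ to $w_\la^o=(\pi\oti\id)(w_\la)\in Z^1_{\rm inv}(\bal,\ovl{\cQ})$ via the canonical isomorphism $\pi\col\lTG\to\cQ$; solve $\bal(u_\la)=(u_\la\oti1)w_\la^o$ by the $2\times2$ matrix trick (after stabilizing to $\ovl{\cM}=B(\ell^2)\oti\cM$, which requires the factoriality and infiniteness of the relevant fixed-point algebras); prove $u_\la\in\ovl{\cR}$ by showing $\Ad u_\la^*$ is a $G_q$-equivariant automorphism of $\ovl{\cQ}\cong\lTG$ and invoking Corollary~\ref{cor:equiv}; untwist the resulting $2$-cocycle $c_{\la,\mu}=u_\la u_\mu u_{\la+\mu}^*\in\ovl{\cM}^{\bal}$ using Sutherland's vanishing theorem so that $\la\mapsto u_\la$ becomes a homomorphism of $P$; and finally show by a sector computation ($[\ga_{\cN}^{\ovl{\cR}}|_{\cN}]=\bigoplus_{\mu\in P}[\th_\mu]$, via the decomposition of $L^2(\ovl{\cM})$ over the Hilbert spaces $\sH_\la$) that $\ovl{\cM}^{\bal}$ and the $u_\la$ actually generate $\ovl{\cR}$. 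Only then does $W^*(u_\mu\mid\mu\in P)\vee\cQ$ give the equivariant copy of $\lG$ and the explicit isomorphism $\Xi$ onto $\ovl{\cR}\oti_T\lG$. None of these steps is a formality, and your proposal neither identifies them nor explains how the cocycle you gesture at would be produced from the data $(\cM,\al)$. The uniqueness paragraph is closer to the paper's argument (it reduces to the fact that any automorphism of $\cR$ fixing $\cM^\al$ pointwise is some $\be_t$, because $u_\la^*\ps(u_\la)$ is a scalar), but as stated it presupposes the generation result above.
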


Another application of Theorem \ref{thm:intro-factor}
concerns theory of 2-cocycle deformation
of locally compact quantum groups.
In \cite{DC},
De Commer has shown that
a 2-cocycle twisted von Neumann bi-algebra
of a locally compact quantum group
again has a locally compact quantum group structure.
In our setting,
we will encounter with a 2-cocycle $\Om$
that is canonically
associated with an irreducible projective unitary
representation of $G_q$ coming from $\lTG$.
We can determine the intrinsic group of $G_{q,\Om}$,
the deformation of $G_q$ by $\Om$
(Theorem \ref{thm:intrinsic}) as follows.

\begin{thme}
The intrinsic group of $G_{q,\Om}$
is isomorphic to $\widehat{T}$.
\end{thme}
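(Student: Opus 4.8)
The plan is to realize the intrinsic group of $G_{q,\Om}$ through the maximal torus and then to pin it down using the quantum flag manifold. Recall that $\Om$ is the obstruction $2$-cocycle of the irreducible projective unitary representation $V$ of $G_q$ on the Hilbert space $H$ with $B(H)=\lTG$, a type I factor by Theorem~\ref{thm:intro-factor}; thus $(\iota\oti\De)(V)=V_{12}V_{13}(1\oti\Om)$ and $\de(x)=V(x\oti 1)V^{\ast}$ is the ergodic translation coaction of $G_q$ on $\lTG$. By De Commer's reflection construction \cite{DC}, $G_{q,\Om}$ is carried by $\lG$ with comultiplication $\De_{\Om}=\Om\De(\cdot)\Om^{\ast}$, so that a group-like unitary of $G_{q,\Om}$ is exactly a unitary $u$ with $\Om\De(u)\Om^{\ast}=u\oti u$. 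Now the restriction $\pi_T\colon\CG\ra\CT$ to the maximal torus is a Hopf $\ast$-homomorphism, and since $\CT$ is commutative the twist is invisible there, so $\pi_T$ is also a quantum group morphism $G_{q,\Om}\ra T$; applying it to group-like unitaries produces a homomorphism $\Phi\colon\Int(G_{q,\Om})\ra\Int(T)=\widehat T$, and the assertion to prove is that $\Phi$ is an isomorphism.

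For injectivity, suppose $\Om\De(u)\Om^{\ast}=u\oti u$ and $\pi_T(u)=1$. Unwinding this against the identity for $V$ produces an $\lTG$-valued invariant cocycle which, once $\pi_T(u)$ is trivial, degenerates to a $G_q$-equivariant self-equivalence of the factor $\lTG$; by Corollary~\ref{cor:equiv} the $G_q$-equivariant automorphism group of $\CTG$ — hence of its weak closure $\lTG$ — is trivial, and the uniqueness clause of Theorem~\ref{thm:descript-coc} then leaves no room for a nonscalar $u$, so $u\in\C 1$. For surjectivity, given $\chi\in\widehat T$ I would assemble a group-like unitary of $G_{q,\Om}$ from the matrix coefficients of $V$ twisted by $\chi$; the obstruction to gluing them into a single unitary is again an $\lTG$-valued invariant cocycle, which by Theorem~\ref{thm:descript-coc} is a coboundary with a primitive in $Z(\lG)$ that is unique up to a scalar, and that primitive is precisely the correction making the twisted matrix coefficients into a unitary $u$ with $\Phi(u)=\chi$. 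The homomorphism property is inherited from $\Phi$, so $\Phi$ is a group isomorphism and the intrinsic group of $G_{q,\Om}$ is $\widehat T$.

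I expect surjectivity to be the genuine difficulty: one has to exhibit, for \emph{every} character of $T$ and not only for a subgroup, an honest unitary solution of the twisted group-like equation, and this forces one to use the concrete description of $\Om$ as the cocycle of $T\backslash G_q$ — ultimately resting on Soibel'man's classification \cite{S-repn} of the irreducible representations of $\CG$ by $T\times W$ and on the Dijkhuizen--Stokman theorem (Theorem~\ref{thm:class-irr}) — rather than any soft property of it, with Theorem~\ref{thm:descript-coc} then converting this freedom into unitaries. Injectivity and the homomorphism property, by contrast, come out of Corollary~\ref{cor:equiv} almost formally once the morphism $G_{q,\Om}\ra T$ is available.
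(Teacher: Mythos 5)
You have correctly located the main ingredients --- the unitary implementing $\de$ on the type I factor $\lTG$, the invariant cocycles of Theorem \ref{thm:descript-coc}, and Corollary \ref{cor:equiv} --- but the device on which your whole argument hangs, the homomorphism $\Phi\col\sG(G_{q,\Om})\ra\widehat{T}$ obtained by applying the restriction map $r_T$ to group-like unitaries, is not well defined. Group-like elements of $G_{q,\Om}$ are unitaries in the von Neumann algebra $L^\infty(G_{q,\Om})=\lG$, and $r_T\col\CG\ra\CT$ admits no normal extension to $\lG$ (already classically there is no normal $*$-homomorphism $L^\infty(SU(2))\ra L^\infty(T)$ extending restriction, since $T$ is a Haar-null set); likewise $\Om$ is built from infinite sums of matrix units of $\lTG$ and lives only in $\lG\oti\lG$, so ``$(r_T\oti r_T)(\Om)$'' has no meaning either. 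Indeed, the group-likes that actually occur are manufactured from the central unitaries $v_\la\in Z(\lG)$, i.e.\ the phases of the polar decompositions of the $C_{\la,w_0\la}^\la$, which are genuinely measure-theoretic objects invisible to $\CT$-valued restrictions. So both the injectivity and the surjectivity of your $\Phi$ are assertions about a map that does not exist, and the proposal as written cannot be repaired by a routine modification of $\Phi$.

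The paper's route avoids any restriction to $T$: with $U\in\lTG\oti\lG$ implementing $\de$ on $\lTG$ (Lemma \ref{lem:U}), the assignment $u\mapsto w^u:=U(1\oti u)U^*$ is shown to be a group isomorphism of $\sG(G_{q,\Om})$ onto $Z_{\rm inv}^1(\de,\lTG)$ --- surjectivity here is a one-line consequence of the factoriality of $\lTG$, not the hard point you anticipate --- and then Theorem \ref{thm:descript-coc} identifies $Z_{\rm inv}^1(\de,\lTG)$ with $P=\widehat{T}$ via $\la\mapsto w_\la=(v_\la^*\oti1)\de(v_\la)$. The genuine difficulty is therefore concentrated in Theorem \ref{thm:descript-coc}(1), namely in showing that the $w_\la$ exhaust $Z_{\rm inv}^1(\de,\lTG)$ (this is where Corollary \ref{cor:equiv}, central ergodicity and the $2\times2$ matrix trick enter), while the existence of one group-like for each $\la\in P$ comes for free from Theorem \ref{thm:factor}(1). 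Your assessment of where the work lies is thus inverted, and the vague step ``assemble a group-like unitary from the matrix coefficients of $V$ twisted by $\chi$'' should be replaced by the concrete recipe: conjugate $w_\la$ by $U$ and read off the second leg.
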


When $G_q=SU_q(2)$,
it has been proved by De Commer
that
$G_{q,\Om}$ is isomorphic to $\widetilde{E}_q(2)$,
Woronowicz's quantum $E(2)$ group
\cite{DC-E2}.
The above theorem generalizes
a partial result of \cite{W-E2}.

This paper is organized as follows.

In Section 2,
we will give a brief summary of theory of a compact quantum group
and a $q$-deformed Lie group $G_q$.

In Section 3,
We will show the factoriality of $\lTG$,
and
present an alternative proof
of Dijkhuizen--Stokman's classification
result (Theorem \ref{thm:class-irr}).
We especially emphasize that
we use the von Neumann algebra $\lTG$
to classify all irreducible representations
of the C$^*$-algebra $\CTG$.
As an application,
we will compute a density operator of the Haar state
(Theorem \ref{thm:Haar-state-density})
and derive the well-known
quantum Weyl dimension formula
(Proposition \ref{prop:quantum-dim}).

In Section 4,
the notion of an invariant cocycle is introduced.
We will prove that all invariant cocycles evaluated in $\lTG$
come from the canonical generators of $Z(\lG)$.
As an application,
we compute the intrinsic group of a 2-cocycle deformation
$G_{q,\Om}$.

In Section 5,
we will discuss a product type action.
First, we will deduce from sector theory
that
the canonical inclusion of factors stated before
corresponds to a minimal action of the maximal torus $T$.
Next, by using invariant cocycles
and the triviality of $G_q$-equivariant automorphism
of $\lTG$,
we will show that a faithful product type action
is actually induced from a minimal action of $T$.
Then the classification of product type actions is studied.
Especially, we will present a complete classification of product type actions
of $SU_q(2)$.
Uncountably many non-product type and mutually non-cocycle conjugate
actions of $SU_q(2)$
on the injective type III$_1$ factors
are also constructed.

In the last section,
we will pose a problem concerning the main results
in a more general situation.

\vspace{10pt}
\noindent
{\bf Acknowledgements.}
The author is grateful
to Masaki Izumi for various advice.
He also would like to thank
Noriyuki Abe and Kenny De Commer
for valuable comments on this paper.
This work is supported in part by
JSPS KAKENHI Grant Number 24740095.

\section{Preliminary}
\subsection{Notations and terminology}
In this paper,
$\Z_+$ denotes the set of non-negative integers,
that is,
$\Z_+=\{0,1,\dots\}$.

The tensor symbol
$\otimes$ denotes the minimal tensor product
for C$^*$-algebras
and the von Neumann algebra tensor product
for von Neumann algebras.

We denote by $\spa S$ and $\ovl{\spa}^{\rm w}S$,
the linear span of a set $S$
and
the weak closure of $\spa S$,
respectively.

For a von Neumann algebra $\cM$,
we will denote by $Z(\cM)$ its center.
By $\End(\cM)$,
we will denote
the set of normal endomorphisms on $\cM$.
For $\rho,\si\in\End(\cM)$,
$(\rho,\si)$ denotes the set of
intertwiners.
Namely, an element $a\in (\rho,\si)$ satisfies
$a\rho(x)=\si(x)a$ for all $x\in\cM$.
If $(\rho,\rho)=\C$,
then we will say that $\rho$ is irreducible.
Two endomorphisms $\rho,\si$ on $\cM$
are said to be equivalent
if there exists a unitary
$u\in\cM$ such that $\rho=\Ad u\circ\si$.
By $\Sect(\cM)$,
we denote the quotient space of
$\End(\cM)$.
The equivalence class of $\rho$
is denoted by $[\rho]$
which is called a \emph{sector}.
For sector theory,
reader's are referred to
\cite{Iz-fusion,L-ind1,L-ind2}.

Recall the notion of a Hilbert space
in a von Neumann algebra \cite{Rob}.
A weakly closed
linear space $\sH$ in a von Neumann algebra $\cM$
is called a \emph{Hilbert space in} $\cM$
if $W^*V\in\C$
for all $V,W\in\sH$.
Then $\sH$ is a Hilbert space
with the inner product $\langle V,W\rangle:=W^*V$.
The support of $\sH$,
which we denote by $s(\sH)$,
is the infimum of projections
$p\in\cM$ such that $pV=V$ for all $V\in\sH$.
If $\{V_i\}_{i\in I}$ is an orthonormal base
of $\sH$,
then we have $s(\sH)=\sum_{i\in I}V_iV_i^*$.

If $\rho,\si\in\End(\cM)$ and $\rho$ is irreducible,
then $(\rho,\si)$ is a Hilbert space in $\cM$
by the inner product $\langle V,W\rangle:=W^*V$
for $V,W\in (\rho,\si)$.

Let $\cN\subs\cM$ be an inclusion of
properly infinite von Neumann algebras.
Then $L^2(\cM)$ also has
the structure of the standard form for $\cN$.
Let $J_\cM$ and $J_\cN$ be
the modular conjugations of $\cM$
and $\cN$, respectively.
Then
$\ga_\cN^\cM(x):=J_\cN J_\cM x J_\cM J_\cN$,
$x\in \cM$,
is called the \emph{canonical endomorphism}
from $\cM$ into $\cN$.
It is known that
the sector
$[\ga_\cN^\cM]$ in $\Sect(\cN)$
does not depend on the choice
of the structure of the standard forms of $\cN$
and $\cM$.

\subsection{Compact quantum group}
We will quickly review theory of compact quantum groups
introduced by Woronowicz.
Our references are \cite{T-Poisson,W-cpct}.

\begin{defn}[Woronowicz]
We will say
that a pair $(A,\de)$ of a separable unital C$^*$-algebra $A$
and a faithful unital $*$-homomorphism
$\de\col A\ra A\oti A$
is a \emph{compact quantum group}
when the following conditions hold:
\begin{itemize}
\item
$\de$ is a coproduct,
that is,
$(\de\oti\id)\circ\de=(\id\oti\de)\circ\de$;
\item
$\de(A)(\C\oti A)$
and
$\de(A)(A\oti\C)$
are norm dense subspaces in $A\oti A$.
\end{itemize}
\end{defn}

If $\bG:=(A,\de)$ is a compact quantum group,
we write $C(\bG):=A$.
It is known that there exists a unique state $h$
called the \emph{Haar state}
such that
\[
(\id\oti h)(\de(x))=h(x)1=(h\oti\id)(\de(x))
\quad
\mbox{for all }
x\in C(\bG).
\]
We always assume
that $h$ is faithful in what follows.
Let $\{L^2(\bG),1_h\}$ be the GNS representation
with respect to $h$.
We will regard $C(\bG)$ as a C$^*$-subalgebra
of $B(L^2(\bG))$ from now on.
By $L^\infty(\bG)$,
we denote the weak closure of $C(\bG)$.

The \emph{multiplicative unitary} $V$
is a unitary
on $L^2(\bG)\oti L^2(\bG)$
such that
\[
V(x1_h\oti\xi)=\de(x)(1_h\oti\xi)
\quad
\mbox{for }
x\in C(\bG),
\
\xi\in L^2(\bG).
\]
Then $V$ satisfies
the pentagon equation
$V_{12}V_{13}V_{23}=V_{23}V_{12}$.
The coproduct $\de$
extends to the normal coproduct
$\de\col L^\infty(\bG)\ra L^\infty(\bG)\oti L^\infty(\bG)$
by
\[
\de(x)=V(x\oti1)V^*
\quad
\mbox{for }
x\in L^\infty(\bG).
\]
Note $V$ belongs to $B(L^2(\bG))\oti L^\infty(\bG)$.
The Haar state $h$ also extends to
a faithful normal invariant state on
$L^\infty(\bG)$
by putting
$h(x):=\langle x1_h,1_h\rangle$
for $x\in L^\infty(\bG)$.

Let $H$ be a Hilbert space.
A unitary $v\in B(H)\oti L^\infty(\bG)$
is called a \emph{unitary representation} on $H$
when $(\id\oti\de)(v)=v_{12}v_{13}$.
If $v=(v_{ij})_{i,j}$
is the matrix representation
with $v_{ij}\in L^\infty(\bG)$,
then we have $\de(v_{ij})=\sum_k v_{ik}\oti v_{kj}$.

Let $v$ and $w$ be unitary representations
on Hilbert spaces $H$ and $K$, respectively.
The intertwiner space $(v,w)$
is the set of bounded linear operators
$a\col H\ra K$
such that $(a\oti1)v=w(a\oti1)$.
If $(v,v)=\C1_H$,
then $v$ is said to be \emph{irreducible}.
If this is the case,
$H$ must be finite dimensional.

By $\Rep_f(\bG)$,
we denote the set of finite dimensional
unitary representations.
We set
\[
A(\bG)
:=\spa\{(\om\oti\id)(v)\mid \om\in B(H)_*,
\
v\in\Rep_f(\bG)
\}.
\]
Then $A(\bG)$ is a dense unital $*$-subalgebra
of $C(\bG)$.
We can define an anti-multiplicative
linear map $\ka$ on $A(\bG)$,
which is called the \emph{antipode},
such that
$(\id\oti\ka)(v)=v^*$
for $v\in\Rep_f(\bG)$.
The \emph{counit}
is the character
$\vep\col A(\bG)\ra\C$
such that $(\id\oti\vep)(v)=1$
for $v\in\Rep_f(\bG)$.
We only treat a co-amenable $\bG$ in this paper,
and $\vep$ extends to the character on
$C(\bG)$.
See \cite{BCT,BMT1,BMT2,T-ame} for details of amenability.

We will introduce the Woronowicz characters $\{f_z\}_{z\in\C}$.
They are multiplicative linear functionals on $A(\bG)$
uniquely determined by the following properties:
\begin{enumerate}
\item
$f_0=\vep$;
\item
For any $a\in A(\bG)$,
the function $\C\ni z\mapsto f_z(a)\in\C$
is entirely holomorphic;

\item
$(f_{z_1}\oti f_{z_2})\circ\de=f_{z_1+z_2}$
for all $z_1,z_2\in\C$;

\item
$f_z(\ka(a))=f_{-z}(a)$,
$f_z(a^*)=\ovl{f_{-\bar{z}}(a)}$
for all $z\in\C$ and $a\in A(\bG)$;

\item
$\ka^2=(f_1\oti\id\oti f_{-1})\circ\de^{(2)}$;

\item
$h(ab)=h(b (f_1\oti\id\oti f_1)(\de^{(2)}(a)))$
for all $a,b\in A(\bG)$,
\end{enumerate}
where $\de^{(2)}:=(\de\oti\id)\circ\de$.
In general,
for $k\in\N$, we let
$\de^{(k)}:=(\de^{(k-1)}\oti\id)\circ\de$.
Then
$\de^{(k+\ell)}=(\de^{(k)}\oti\de^{(\ell-1)})\circ\de$
for $k,\ell\in\N$.

The modular automorphism group
$\si^h$ is given by
\[
\si_t^h(x)=(f_{it}\oti \id\oti f_{it})(\de^{(2)}(x))
\quad
\mbox{for all }
t\in\R,\ x\in A(\bG).
\]
Define the \emph{scaling automorphism group} $\ta$ by
\[
\ta_t(x)=(f_{it}\oti \id\oti f_{-it})(\de^{(2)}(x))
\quad
\mbox{for all }
t\in\R,\ x\in A(\bG).
\]

Let $v\in B(H)\oti L^\infty(\bG)$ be a finite dimensional
unitary representation.
Then it is known that
$v$ in fact belongs to $B(H)\oti A(\bG)$.
We let $F_v:=(\id\oti f_1)(v)$,
which is non-singular and positive.
Then $F_v^z=(\id\oti f_z)(v)$ for all $z\in\C$.
It is known that $\Tr(F_v)=\Tr(F_v^{-1})$,
which is called the \emph{quantum dimension} of $v$,
and denoted by $\dim_q(v)$ or $\dim_q H$.

Let $\Irr(\bG)$ be the complete set of
unitary equivalence classes
of irreducible unitary representations.
For $s\in \Irr(\bG)$,
we fix a section $v(s)=(v(s)_{ij})_{i,j\in I_s}$.
Then we have the following orthogonal equalities:
for all $s,t\in\Irr(\bG)$,
$i,j\in I_s$ and $k,\ell\in I_t$,
\begin{equation}
\label{eq:Haar-ortho}
h(v(s)_{ij}v(t)_{k\ell}^*)
=
\dim_q(v(s))^{-1}(F_{v_s})_{\ell,j}\de_{s,t}\de_{i,k},
\end{equation}
\[
h(v(s)_{ij}^*v(t)_{k\ell})
=
\dim_q(v(s))^{-1}(F_{v_s}^{-1})_{k,i}\de_{s,t}\de_{j,\ell}.
\]

\subsection{Action}
Let $A$ be a unital C$^*$-algebra.
We will say that
a faithful unital $*$-homomorphism $\al\col A\ra A\oti C(\bG)$
is a \emph{(right) action} of $\bG$ on $A$
if $(\al\oti\id)\circ\al=(\id\oti\de)\circ\al$,
and $\al(A)(\C\oti C(\bG))$ is a dense subspace
of $A\oti C(\bG)$.
Similarly, we can define a left action.

By $A^\al$, we denote the fixed point algebra
$\{x\in A\mid \al(x)=x\oti1\}$.
If $A^\al=\C$,
then $\al$ is said to be \emph{ergodic}.

For a von Neumann algebra $\cM$,
a (right) action means a faithful normal unital
$*$-homomorphism
$\al\col\cM\ra\cM\oti L^\infty(\bG)$
satisfying $(\al\oti\id)\circ\al=(\id\oti\de)\circ\al$.
It is known that
the condition of the density like the above automatically holds.
The fixed point algebra $\cM^\al$ is similarly defined.

We will say that a state $\vph\in \cM_*$
is \emph{invariant}
when
$(\vph\oti\id)(\al(x))=\vph(x)1$ for all $x\in\cM$.

The crossed product is defined by
\[
\cM\rti_\al\bG
:=
\ovl{\spa}^{\rm w}
\{\al(\cM)(\C\oti\RG)\}
\subs
\cM\oti B(L^2(\bG)),
\]
where $\RG$ denotes the right quantum group algebra,
that is,
\[
\RG:=\ovl{\spa}^{\rm w}\{(\id\oti\om)(V)\mid \om\in L^\infty(\bG)_*\}.
\]

Let $e_1:=(\id\oti h)(V)\in\RG$.
Then $e_1$ is a minimal projection of $B(L^2(\bG))$,
and $(1\oti e_1)(\cM\rti_\al\bG)(1\oti e_1)=\cM^\al\oti \C e_1$.
Thus $\cM^\al$ is a corner of $\cM\rti_\al\bG$.
In particular,
if $\cM\rti_\al\bG$ is a factor,
then so is $\cM^\al$.

\subsection{Quantum subgroup}
\label{subsect:quantum subgrp}
Let $\bH$ and $\bG$ be compact quantum groups.
We will say that $\bH$ is a \emph{quantum subgroup} of $\bG$
when there exists a unital surjective $*$-homomorphism
$r_\bH\col C(\bG)\to C(\bH)$,
which we will call a \emph{restriction map},
such that
$\de_\bH\circ r_\bH=(r_\bH\oti r_\bH)\circ\de_\bG$,
where $\de_\bH$ and $\de_\bG$ denote
the coproducts of $\bH$ and $\bG$, respectively.

Then $\bH$ acts on $C(\bG)$ from the both sides.
Namely,
let $\ga_\bH^\ell:=(r_\bH\oti\id)\circ\de_\bG$
and $\ga_\bH^r:=(\id \oti r_\bH)\circ\de_\bG$.
Then they are left and right actions of $\bH$.
Moreover, they are commuting,
that is,
$(\id\oti\ga_\bH^r)\circ\ga_\bH^\ell
=(\ga_\bH^\ell\oti\id)\circ\ga_\bH^r$.

Let us introduce the function algebras
on the homogeneous spaces as follows:
\[
C(\bH\backslash\bG)
:=
\{x\in C(\bG)\mid \ga_\bH^\ell(x)=1\oti x\},
\]
\[
C(\bG/\bH)
:=
\{x\in C(\bG)\mid \ga_\bH^r(x)=x\oti1\}.
\]
Then the restrictions of $\de_\bG$
on $C(\bH\backslash\bG)$
and $C(\bG/\bH)$ yield actions from the right
and left, respectively.
The weak closures of $C(\bH\backslash\bG)$
and $C(\bG/\bH)$ are denoted by
$L^\infty(\bH\backslash\bG)$
and $L^\infty(\bG/\bH)$, respectively.

Let $\vep_\bH$ and $\vep_\bG$ be
the counits of $\bH$ and $\bG$,
respectively.
Then $\vep_\bH\circ r_\bH=\vep_\bG$.
So we will denote simply by $\vep$ the counits
of $\bH$ and $\bG$.

\begin{lem}
Let $\al$
be an action of $\bG$ on a von Neumann algebra $\cM$.
Then there uniquely exists a unital ${\rm C}^*$-subalgebra
$A$ of $\cM$
such that
\begin{itemize}
\item
$\al(A)\subs A\oti C(\bG)$;

\item
If a ${\rm C}^*$-subalgebra $B\subs\cM$
satisfies $\al(B)\subs B\oti C(\bG)$,
then $B\subs A$;

\item
$A$ is weakly dense in $\cM$.
\end{itemize}
\end{lem}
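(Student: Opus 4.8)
The plan is to take for $A$ the \emph{spectral subalgebra} attached to $\al$ and to verify the three conditions by hand; only the maximality condition will genuinely use the standing co-amenability of $\bG$. For $s\in\Irr(\bG)$ let $c_s\in A(\bG)$ be the element built from the $v(s)_{ij}$'s and $F_{v_s}$ for which
\[
E_s(x):=\dim_q(v(s))\,(\id\oti h)\bigl(\al(x)(1\oti c_s)\bigr),\qquad x\in\cM,
\]
is, by the orthogonality relations \eqref{eq:Haar-ortho}, the normal projection of $\cM$ onto the $s$-isotypic component of $\al$ (equivalently, $\al(E_s(x))=\al(x)(1\oti z_s)$ for the central projection $z_s\in L^\infty(\bG)$ supporting the block $s$). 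Then $E_s\circ E_t=\de_{s,t}E_s$, $E_0=(\id\oti h)\circ\al$ is the canonical conditional expectation onto $\cM^\al$, and $E_s(\cM)^*=E_{\ovl{s}}(\cM)$. Put $\cA:=\spa\{E_s(\cM)\mid s\in\Irr(\bG)\}$ and let $A$ be the norm closure of $\cA$ in $\cM$; equivalently, $A=\{x\in\cM\mid\al(x)\in\cM\oti C(\bG)\}$. Since the trivial representation gives $1\in E_0(\cM)$, the identity $E_s(\cM)^*=E_{\ovl{s}}(\cM)$ holds, and a product of a matrix coefficient of $s$ with one of $t$ lies in the span of the matrix coefficients of the irreducibles $r\subs s\oti t$, the space $\cA$ is a unital $*$-subalgebra of $\cM$; hence $A$ is a unital C$^*$-subalgebra.

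The first bullet is then quick: for $x\in E_s(\cM)$, comparing the outer legs of $(\al\oti\id)(\al(x))=(\id\oti\de)(\al(x))$ shows $\al(x)=\sum_{i,j\in I_s}x_{ij}\oti v(s)_{ij}$ with each $x_{ij}\in E_s(\cM)\subs A$, so $\al(x)\in A\oti C(\bG)$; by linearity and norm-continuity of $\al$, $\al(A)\subs A\oti C(\bG)$. For the third bullet I would use the finitely supported Fej\'er functionals $\ph_n\in L^\infty(\bG)_*$ furnished by co-amenability of $\bG$ (see \cite{BMT1,BMT2}) and set $\Phi_n:=(\id\oti\ph_n)\circ\al\col\cM\to\cM$: each $\Phi_n$ is unital and completely positive, $\Phi_n(\cM)\subs\cA$, and $\Phi_n\to\id_\cM$ in the point-weak topology (standard Peter--Weyl/Fej\'er theory for quantum group actions, cf.\ \cite{T-Poisson}), so $\cA$, hence $A$, is weakly dense in $\cM$.

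The crux is the second (maximality) condition. Let $B\subs\cM$ be a C$^*$-subalgebra with $\al(B)\subs B\oti C(\bG)$. For $x\in B$ and $s\in\Irr(\bG)$ we have $\al(x)(1\oti c_s)\in B\oti C(\bG)$ (since $1\in B$ and $c_s\in A(\bG)\subs C(\bG)$), so applying the state $h$ to the second leg gives $E_s(x)\in B$; consequently $\Phi_n(x)\in\spa\{E_s(x)\mid s\in\Irr(\bG)\}\subs B\cap\cA$. On the other hand, coassociativity gives the identity $(\Phi_n\oti\id)\circ\al=(\id\oti m_{\ph_n})\circ\al$, where $m_{\ph_n}:=(\ph_n\oti\id)\circ\de\col C(\bG)\to C(\bG)$ is the associated Fej\'er multiplier; since $\al(x)\in\cM\oti C(\bG)$ and, by co-amenability, $\|m_{\ph_n}(a)-a\|\to0$ for every $a\in C(\bG)$, applying $\id\oti\vep$ to this identity and using $(\id\oti\vep)\circ\al=\id_\cM$ yields
\[
\Phi_n(x)=(\Phi_n\oti\vep)(\al(x))\to(\id\oti\vep)(\al(x))=x\quad\text{in norm.}
\]
As $A$ is norm-closed and each $\Phi_n(x)\in\cA\subs A$, this forces $x\in A$, i.e.\ $B\subs A$. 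Uniqueness is then formal: any $A'$ satisfying the three conditions is a C$^*$-subalgebra with $\al(A')\subs A'\oti C(\bG)$, so $A'\subs A$ by the maximality just proved, and symmetrically $A\subs A'$.

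I expect the main obstacle to be precisely the \emph{norm} convergence $\Phi_n(x)\to x$ in the last display: dropping co-amenability one recovers only weak convergence, which cannot pull a norm-closed subalgebra $B$ into the norm-closed algebra $A$. Everything else is routine bookkeeping once the spectral projections $E_s$ and the coassociativity identity $(\Phi_n\oti\id)\circ\al=(\id\oti m_{\ph_n})\circ\al$ are in hand.
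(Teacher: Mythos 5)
Your proof is correct in substance but takes a genuinely different route from the paper's. You build $A$ concretely as the norm closure of the Podle\'s spectral subalgebra $\cA=\spa\{E_s(\cM)\mid s\in\Irr(\bG)\}$ and must then work to establish the maximality property, which is where the Fej\'er argument and the standing co-amenability of $\bG$ enter: for $x\in B$ the slices $E_s(x)$ lie in $B\cap\cA$, and the norm convergence $\Phi_n(x)\to x$ (valid because $\vep$ is bounded and the unital completely positive multipliers $\id\oti m_{\ph_n}$ converge pointwise in norm on $\cM\oti C(\bG)$) pulls $x$ into the norm-closed algebra $A$. The paper instead defines $A$ as the C$^*$-algebra generated by \emph{all} subalgebras $B$ with $\al(B)\subs B\oti C(\bG)$, so that the first two conditions are immediate, and only weak density needs an argument, supplied by the observation that $\cM$ is weakly spanned by the images $T(H_s)$ of $\bG$-equivariant maps, each of which generates an invariant C$^*$-subalgebra. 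The paper's route is shorter and, notably, does not use co-amenability at all, whereas your maximality step genuinely needs it (as you correctly diagnose); under the paper's standing assumptions the two constructions produce the same $A$. Two small points to repair: the parenthetical ``equivalently, $\al(E_s(x))=\al(x)(1\oti z_s)$ for a central projection $z_s\in L^\infty(\bG)$'' is false --- the block projections live in $\ell^\infty(\widehat{\bG})$, not in $L^\infty(\bG)$ (already for the circle group there is no such $z_s$) --- although nothing downstream uses it; and the existence of \emph{finitely supported} states $\ph_n$ with $\|(\ph_n\oti\id)(\de(a))-a\|\to0$ for all $a\in C(\bG)$ should be justified via amenability of $\widehat{\bG}$ (equivalent to co-amenability of $\bG$), since it is not completely immediate from \cite{BMT1,BMT2} as stated.
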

\begin{proof}
Let $A$ be a C$^*$-algebra generated by
all C$^*$-subalgebras
$B\subs\cM$
which satisfy $\al(B)\subs B\oti C(\bG)$.
Then it is clear that (1) and (2) hold.
We will check (3) as follows.

Let $s\in\Irr(\bG)$
and $H_s$ the corresponding
irreducible module.
Let $\Hom_\bG(H_s,\cM)$ be the set of
$\bG$-equivariant linear maps.
Then $\cM$ is weakly spanned by
$T(H_s)$ for $T\in \Hom_\bG(H_s,\cM)$
and $s\in\Irr(\bG)$.
It is clear that $T(H_s)\subs A$,
and we are done.
Note that $A$ is in fact
generated by such $T(H_s)$'s.
\end{proof}

Let $\al$ and $A$ be as above
and $\bH$ a quantum subgroup of $\bG$
with a restriction map $r_\bH$.
Then we can restrict $\al$ on $\bH$,
that is,
$\al_\bH:=(\id\oti r_\bH)\circ\al$
gives an action of $\bH$ on $A$.
It is clear that $\al_\bH$ preserves
any $\al$-invariant normal state on $\cM$.
Thus $\al_\bH$ extends to $\cM$
as the action of $\bH$.
We will call $\al_\bH$ the restriction of $\al$
by $\bH$.

\subsection{$U_q(\mathfrak{g})$}
We will review the definition of $U_q(\mathfrak{g})$
introduced by Drinfel'd and Jimbo \cite{D,J},
and the highest weight theory.
Our references are \cite{CP,Jos,Kac,Kl-Sch,KS}.
Let $A=(a_{ij})_{i,j\in I}$ be
an irreducible Cartan matrix of finite type
($I:=\{1,\dots,n\}$),
and $(\mathfrak{h},\{h_i\}_{i\in I},\{\al_i\}_{i\in I})$
the root data (the realization of $A$),
that is,
\begin{itemize}
\item
$\mathfrak{h}$ is an $n$-dimensional vector space over $\C$,
and $\{h_i\}_i$ is a base of $\mathfrak{h}$;

\item
$\{\al_i\}_i$ is a base of $\mathfrak{h}^*$,
the space of linear functionals on $\mathfrak{h}$;

\item
$\al_j(h_i)=a_{ij}$ for all $i,j\in I$.
\end{itemize}

Each $\al_i$ is called a simple root.
The simple reflection
$s_i\col \mathfrak{h}^*\ra\mathfrak{h}^*$
is defined by
$s_i(\la):=\la-\la(h_i)\al_i$.
Note that $s_i^2=1$.
The Weyl group $W$
is the finite group generated by $s_i$'s.
The word length of $w\in W$ with respect to
$\{s_1,\dots,s_n\}$ is denoted by $\ell(w)$.
We denote by $w_0$ an element of maximal length.
It is known that
any $w\in W$ is contained in $w_0$,
that is,
$\ell(w_0)=\ell(w)+\ell(w^{-1}w_0)$.
It follows from this equality
that $w_0$ is unique and $w_0^2=1$.

Take positive integers $\{d_i\}_{i\in I}$
such that
$d_ia_{ij}=d_ja_{ji}$ for all $i,j\in I$.
A standard form is an inner product
$(\cdot,\cdot)$ on $\mathfrak{h}^*$
such that
$(\al_i,\al_j)=d_ia_{ij}$ for $i,j\in I$.
It is known that $(\al_i,\al_i)=2d_i$
can attain at most two values.
We normalize $\{d_i\}_{i\in I}$ so that
the smallest value of
$(\al_i,\al_i)$ is equal to 2.
Then
the normalized standard form
satisfies the following properties:
\begin{itemize}
\item
$W$-invariance,
that is,
$(w\la,w\mu)=(\la,\mu)$
for $w\in W$, $\la,\mu\in \mathfrak{h}^*$;

\item
$(\al_i,\al_i)/2\in \Z_+$ for $i\in I$;

\item
$\la(h_i)=2(\la,\al_i)/(\al_i,\al_i)$
for $i\in I$.
\end{itemize}

We associate $A$ with a finite dimensional
simple Lie algebra $\mathfrak{g}$ over $\C$
whose Cartan subalgebra is $\mathfrak{h}$.

\begin{defn}[Drinfel'd, Jimbo]
Let $0<q<1$.
The \emph{quantum universal enveloping algebra}
$U_q(\mathfrak{g})$
is the unital $\C$-algebra
generated by $\{K_i,X_i^+,X_i^-\}_{i\in I}$
such that
$K_i$ is invertible for all $i\in I$,
and the following relations hold
for all $i,j\in I$:
\begin{equation}
\label{eq:KX}
K_iK_j=K_jK_i,
\quad
K_iX_j^{\pm}=q^{\pm(\al_i,\al_j)/2}X_j^{\pm}K_i;
\end{equation}
\begin{equation}
X_i^+X_j^- -X_j^-X_i^+
=\de_{ij}
\frac{K_i^2-K_i^{-2}}{q_i-q_i^{-1}};
\end{equation}
\begin{equation}
\sum_{k=0}^{1-a_{ij}}
(-1)^k
\binom{1-a_{ij}}{k}_{q_i^2}q_i^{-k(1-a_{ij}-k)}
(X_i^{\pm})^kX_j^{\pm}(X_i^{\pm})^{1-a_{ij}-k}=0
\quad
\mbox{if }i\neq j,
\end{equation}
where $q_i:=q^{(\al_i,\al_i)/2}$,
and the $t$-binomial is defined by
\[
\binom{m}{n}_{t}
:=
\frac{(t;t)_m}{(t;t)_n(t;t)_{m-n}},
\quad
(a;t)_m=(1-a)(1-at)\cdots (1-at^{m-1})
\]
for $m,n\in\Z_+$ and $a\in\C$.
\end{defn}

Then $U_q(\mathfrak{g})$ has the Hopf$*$-algebra structure
defined as follows:
For $i\in I$,
\begin{itemize}
\item
(Coproduct)
\[
\De(K_i^{\pm})=K_i^{\pm}\oti K_i^{\pm},
\quad
\De(X_i^{\pm})=X_i^{\pm}\oti K_i+K_i^{-1}\oti X_i^{\pm},
\]

\item
(Antipode)
\[
S(K_i^{\pm})=K_i^{\mp},
\quad
S(X_i^{\pm})=-q_i^{\pm1}X_i^{\pm},
\]

\item
(Counit)
\[
\vep(K_i)=1,
\quad
\vep(X_i^{\pm})=0,
\]

\item
(Involution)
\begin{equation}
\label{eq:inv}
K_i^*=K_i,
\quad
(X_i^{\pm})^*=X_i^\mp.
\end{equation}
\end{itemize}

\subsection{Representation theory of $U_q(\mathfrak{g})$}
For a finite dimensional Hilbert space $H$,
we call a $*$-homomorphism $\pi$
from $U_q(\mathfrak{g})$ into $B(H)$
a \emph{representation}.
If the commutant of $\Ima \pi$ is trivial,
$\pi$ is said to be \emph{irreducible}.
In general,
$\Ima\pi$ is a finite dimensional C$^*$-algebra,
and $\pi$ is the direct sum of irreducibles.

Let $\pi\col U_q(\mathfrak{g})\to B(H)$
be an irreducible representation.
From (\ref{eq:inv}),
we obtain non-singular self-adjoint operators $\pi(K_i)$.
Let $p_i^{\pm}$ be the projections onto
the positive and negative spectral subspaces
of $\pi(K_i)$, respectively.
Then the relations (\ref{eq:KX}) and (\ref{eq:inv})
imply $p_i^\pm H$ are $U_q(\mathfrak{g})$-invariant.
Hence each $K_i$ has either only positive eigenvalues
or only negative ones.
We will say that
$\pi$ is \emph{admissible} when
$\pi(K_i)$ is a positive operator for all $i\in I$.

For $g=(g_k)_{k}\in \Z_2^n=\prod_{k=1}^n\{1,-1\}$,
we will define an automorphism $\si_g$ on $U_q(\mathfrak{g})$
as a $*$-algebra (not as a Hopf$*$-algebra)
by
\[
\si_g(K_i^\pm)=g_i K_i^\pm,
\quad
\si_g(X_i^\pm)=X_i^\pm
\quad
\mbox{for }
i\in I.
\]
Then for any irreducible $\pi$,
we can find $g\in \Z_2^n$
so that $\pi\circ\si_g$ is admissible.
Hence the classification of irreducible admissible representations
is essential.

Let $\pi\col U_q(\mathfrak{g})\to B(H)$
be a finite dimensional admissible representation.
Then $\pi(K_i)$'s are generating a commutative C$^*$-subalgebra.
Hence we obtain the decomposition of $H$ into the common eigenspaces.
For $\la\in\mathfrak{h}^*$,
we define
\[
H_\la:=\{\xi\in H\mid
\pi(K_i)\xi=q^{(\la,\al_i)/2}\xi
\}.
\]
Note that $q^{(\la,\al_i)/2}=q_i^{\la(h_i)/2}$.
Then the positivity of each $\pi(K_i)$
implies the direct sum decomposition of $H$:
\[
H=\bigoplus_{\la\in\mathfrak{h}^*}H_\la.
\]
If $H_\la\neq\{0\}$,
then $\la$ and $H_\la$ are called a \emph{weight}
and a \emph{weight space}
of $\pi$ (or of the $U_q(\mathfrak{g})$-module $H$),
respectively.
By $\Wt(H)$,
we denote the collection of the weights of $\pi$.
It is known that
$\dim H_\la=\dim H_{w\la}$
for all $\la\in \Wt(H)$ and $w\in W$.

If a non-zero vector $\xi\in H_\la$
is cyclic and $\pi(X_i^+)\xi=0$ for all $i\in I$,
then $\la$ and $\xi$ are called a \emph{highest weight}
and a \emph{highest weight vector} of $H$, respectively.
A lowest weight and a lowest vector are similarly introduced
by using $X_i^-$ instead of $X_i^+$.
It is known that
if $\la$ is a highest weight,
then $w_0\la$ is a lowest weight.

By $\Irr^+(U_q(\mathfrak{g}))$,
we denote the set of the unitary equivalence classes
of irreducible admissible representations
of finite dimension.

Let us introduce
the root lattice $Q:=\sum_{i\in I}\Z\al_i\subs\mathfrak{h}^*$,
and set $Q_+:=\sum_{i\in I}\Z_+\al_i$.
We will equip $\mathfrak{h}^*$
with the partial order $\leq$ such that
$\la\leq\mu$ if and only if $\mu-\la\in Q_+$.
Then it turns out that
a finite dimensional irreducible admissible representation has
a unique maximal weight
that is in fact the highest weight.
Moreover,
the weight space of the highest weight is one-dimensional.

Let
\[
P:=\{\la\in\mathfrak{h}^*\mid \la(h_i)\in\Z\mbox{ for all }i\in I\},
\]
and
\[
P_+:=\{\la\in\mathfrak{h}^*\mid \la(h_i)\in\Z_+\mbox{ for all }i\in I\}.
\]
We will call an element of $P$ and $P_+$
an \emph{integral weight} and
a \emph{dominant integral weight}, respectively.
When $\la\in P_+$ satisfies $\la(h_i)>0$ for all $i\in I$,
$\la$ is said to be \emph{regular}.
We will denote by $P_{++}$ the set of regular dominant weights.
Note that
$Q\subs P\subs \sum_{i\in I}\Q\al_i$
since $A$ is an invertible matrix.

From $\la\in P_+$,
we can construct an irreducible module $L(\la)$
called the \emph{Verma module}.
The highest weight theory says
that there exists a one-to-one correspondence
$P_+\ni \la \longleftrightarrow L(\la)
\in\Irr^+(U_q(\mathfrak{g}))$.
Let $\pi_\la\col U_q(\mathfrak{g})\to B(L(\la))$
be the corresponding representation.

Let $\om_i\in P_+$ be the \emph{fundamental weight}
that is defined by
$\om_i(h_j)=\de_{i,j}$ for $i,j\in I$.
Then $P=\sum_{i\in I}\Z\om_i$ and $P_+=\sum_{i\in I}\Z_+\om_i$.

The Weyl vector is defined by
$\rho:=(1/2)\sum_{\al\in \De_+}\al$,
where $\De:=\{w\al_i\mid w\in W,\ i\in I\}$
and $\De_+:=\De\cap Q_+$.
It is known that $\rho(h_i)=1$ for all $i\in I$,
that is, $\rho=\sum_{i\in I}\om_i$.
In particular, $\rho$ is a dominant integral weight.

\subsection{$C(G_q)$}
Let
$\pi\col U_q(\mathfrak{g})\ra B(H_\pi)$
be a finite dimensional representation.
For vectors $\xi,\eta\in H_\pi$,
we will define a linear functional
$C_{\xi,\eta}^\pi$ on $U_q(\mathfrak{g})$
by
\[
C_{\xi,\eta}^\pi(x):=\langle \pi(x)\eta,\xi\rangle
\quad
\mbox{for }
x\in U_q(\mathfrak{g}).
\]

For $\la\in P_+$ and $\mu\in \Wt(L(\la))$,
we will fix an orthonormal base
$\{\xi_\mu^i\mid i\in I_\mu^\la\}$
of $L(\la)_\mu$,
where $I_\mu^\la:=\{1,\dots,\dim L(\la)_\mu\}$.
We often write
$C_{\xi_\mu^i,\xi_\nu^j}^\la$
or, more simply,
$C_{\mu_i,\nu_j}^\la$
for $C_{\xi_\mu^i,\xi_\nu^j}^{\pi_\la}$.

Since $\dim L(\la)_{w\la}=\dim L(\la)_\la=1$
for $w\in W$,
we simply denote by $C_{w\la,w'\la}^\la$
for $C_{\xi_{w\la},\xi_{w'\la}}^\la$,
where $\xi_{w\la}\in L(\la)_{w\la}$
and $\xi_{w'\la}\in L(\la)_{w'\la}$
are fixed unit vectors for $w,w'\in W$.
Note that there exists an ambiguity
of a constant factor of modulus one
about this expression.

Then we will introduce
the following subspace of $U_q(\mathfrak{g})^*$:
\begin{align*}
A(G_q)
&:=
\spa
\{
C_{\xi,\eta}^\pi\mid
\xi,\eta\in H_\pi,
\
\pi\mbox{ is an admissible representation}
\}
\\
&\ =
\spa\{
C_{\mu_i,\nu_j}^\la
\mid
\mu,\nu\in \Wt(L(\la)),
\
i\in I_\mu^\la,
\
j\in I_\nu^\la,
\
\la\in P_+
\}.
\end{align*}

We put the Hopf$*$-algebra structure
on $A(G_q)$ as follows:
for $\ph,\ps\in A(G_q)$ and $x,y\in U_q(\mathfrak{g})$,
\begin{itemize}
\item
(Product)
\[
(\ph\ps)(x):=(\ph\oti\ps)(\De(x)),
\]
\item
(Coproduct)
\[
\de(\ph)(x\oti y):=\ph(xy),
\]
\item
(Antipode)
\[
\ka(\ph)(x):=\ph(S(x)),
\]
\item
(Counit)
\[
\vep(\ph):=\ph(1),
\]
\item
(Involution)
\[
\ph^*(x)=\ovl{\ph(S(x)^*)}.
\]
\end{itemize}
The following equality is frequently used:
\[
\de(C_{\mu_i,\nu_j}^\la)
=
\sum_{\zeta\in\Wt(\la),\,k\in I_\zeta^\la}
C_{\mu_i,\zeta_k}^\la\oti C_{\zeta_k,\nu_j}^\la,
\]
where $\Wt(\la)$ denotes $\Wt(L(\la))$.
The C$^*$-completion of $A(G_q)$ is denoted by $\CG$.
It is known that $G_q$ is co-amenable.
Hence the Haar state $h$ is faithful,
and the counit $\vep$ is norm bounded on $\CG$
(see, for example, \cite[Corollary 5.1]{Ban}).
We will describe the Woronowicz character of $A(G_q)$,
which is well-known for experts
(see \cite[Example 9, p.425]{Kl-Sch}).

\begin{lem}
\label{lem:Woronowicz}
For $z\in\C$,
the Woronowicz character
$f_z\col A(G_q)\ra\C$
is given by
\[
f_z(C_{\xi_\mu,\xi_\nu}^\la)
=
\langle\xi_\nu,\xi_\mu\rangle
q^{2(\mu,\rho)z}
\quad
\mbox{for all }
\xi_\mu\in L(\la)_\mu,
\
\xi_\nu\in L(\la)_\nu.
\]
\end{lem}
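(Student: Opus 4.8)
The plan is to verify the defining properties (1)--(6) of the Woronowicz characters directly for the functional $f_z$ given by the formula, invoking uniqueness to conclude. Since the $C^\la_{\xi_\mu,\xi_\nu}$ span $A(G_q)$, it suffices to check the identities on matrix coefficients. First I would record that $f_z$ is well-defined: although the notation $C^\la_{\xi_\mu,\xi_\nu}$ involves a choice of bases, the expression $\langle\xi_\nu,\xi_\mu\rangle q^{2(\mu,\rho)z}$ is bilinear in $(\xi_\nu,\xi_\mu)$ and vanishes unless $\mu=\nu$ (weights being orthogonal), so it patches together to a linear functional on $A(G_q)$; holomorphicity in $z$ (property (2)) is then immediate since $z\mapsto q^{2(\mu,\rho)z}$ is entire, and property (1), $f_0=\vep$, follows because $f_0(C^\la_{\xi_\mu,\xi_\nu})=\langle\xi_\nu,\xi_\mu\rangle=\langle\pi_\la(1)\xi_\nu,\xi_\mu\rangle=C^\la_{\xi_\mu,\xi_\nu}(1)=\vep(C^\la_{\xi_\mu,\xi_\nu})$.

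\textbf{Multiplicativity and the coproduct identity.} Next I would check that $f_z$ is an algebra homomorphism and that $(f_{z_1}\oti f_{z_2})\circ\de=f_{z_1+z_2}$ (property (3)). For the latter, using the comultiplication formula $\de(C^\la_{\mu_i,\nu_j})=\sum_{\zeta,k}C^\la_{\mu_i,\zeta_k}\oti C^\la_{\zeta_k,\nu_j}$, only the term $\zeta=\mu=\nu$ survives after applying $f_{z_1}\oti f_{z_2}$, giving $\langle\xi_\nu,\xi_\mu\rangle q^{2(\mu,\rho)z_1}q^{2(\mu,\rho)z_2}=f_{z_1+z_2}(C^\la_{\mu_i,\nu_j})$. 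For multiplicativity I would use that the product in $A(G_q)$ is dual to $\De$ on $U_q(\mathfrak{g})$, so that $f_z$ multiplicative amounts to $(f_z\oti f_z)\circ\de=f_z$, which is the $z_1=z_2=z$ case just proved. It is then natural to guess that $f_z$ is implemented by a group-like element of (a completion of) $U_q(\mathfrak{g})$: concretely, $f_z(\ph)=\ph(K^{2z\rho})$ where $K^{2z\rho}$ acts on $L(\la)_\mu$ as the scalar $q^{(\mu,2z\rho)}=q^{2(\mu,\rho)z}$ (this is the element often written $K_{2\rho}^{z}$, with $2\rho=\sum_i(\text{something})\al_i$ expressed via the $h_i$). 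Identifying $f_z$ this way makes properties (4)--(6) essentially bookkeeping with the Hopf structure.

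\textbf{Properties (4), (5), (6).} Property (4): $f_z(\ka(\ph))=\ka(\ph)(K^{2z\rho})=\ph(S(K^{2z\rho}))=\ph(K^{-2z\rho})=f_{-z}(\ph)$, using $S(K_i^{\pm})=K_i^{\mp}$; the $*$-identity $f_z(\ph^*)=\ovl{f_{-\bar z}(\ph)}$ follows from $\ph^*(x)=\ovl{\ph(S(x)^*)}$, $S(K^{2z\rho})^*=K^{-2\bar z\rho}$ after noting $K_i^*=K_i$. Property (5), $\ka^2=(f_1\oti\id\oti f_{-1})\circ\de^{(2)}$, is the dual statement of the well-known identity $S^2(x)=K_{2\rho}\,x\,K_{2\rho}^{-1}$ on $U_q(\mathfrak{g})$ (conjugation by the group-like element $K^{2\rho}$ implements the square of the antipode); I would either cite this standard fact or derive it on generators $X_i^\pm,K_i$ from the relations (\ref{eq:KX}). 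Property (6), $h(ab)=h(b\,(f_1\oti\id\oti f_1)(\de^{(2)}(a)))$, is the modular property of the Haar state and follows from the orthogonality relations (\ref{eq:Haar-ortho}) once one knows $F_{v_\la}=(\id\oti f_1)(v_\la)$ acts on $L(\la)_\mu$ as $q^{2(\mu,\rho)}$ — but that is precisely the $z=1$ case of the formula being proved, so (6) is self-consistent and can be checked against (\ref{eq:Haar-ortho}) directly. By the uniqueness clause in the definition of the Woronowicz characters, establishing (1)--(6) for this $f_z$ completes the proof.

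\textbf{Main obstacle.} The genuinely substantive point is property (5) (equivalently, the identity $S^2=\Ad K_{2\rho}$), since this is where the specific value $2\rho$ of the exponent is forced; everything else is either formal Hopf-algebra duality or a direct weight-space computation. I would verify $S^2(X_i^\pm)=K_i^{\pm2}X_i^\pm K_i^{\mp2}$ times the correct scalar and match it with conjugation by $K^{2\rho}$ using $\al_i(2\rho)=2d_i=(\al_i,\al_i)$ together with the commutation relation $K_j X_i^\pm=q^{\pm(\al_i,\al_j)/2}X_i^\pm K_j$; the bookkeeping here is the only place where care is needed. Alternatively, one may simply cite \cite[Example 9, p.425]{Kl-Sch} as the excerpt suggests and limit the proof to a consistency check of the normalization.
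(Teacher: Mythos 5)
Your strategy — define $f_z$ by the formula, verify axioms (1)--(6), and invoke the uniqueness clause — is a legitimate route, but it runs in the opposite direction from the paper. The paper takes the Woronowicz characters as given and \emph{derives} the formula: it computes $S^2$ on PBW-type monomials to get $\ta_t(C^\la_{\xi_\mu,\xi_\nu})=q^{2(\mu-\nu,\rho)it}C^\la_{\xi_\mu,\xi_\nu}$, deduces from irreducibility of $C^\la$ that $f_z(C^\la_{\xi_\mu,\xi_\nu})=\langle\xi_\nu,\xi_\mu\rangle q^{2(\mu,\rho)z}g^\la(z)$ for some entire multiplicative $g^\la$, and then kills $g^\la$ using $\Tr(F_\la)=\Tr(F_\la^{-1})$ together with $w_0\rho=-\rho$ and $\dim L(\la)_\mu=\dim L(\la)_{w_0\mu}$. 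Note that this shows property (5) alone does \emph{not} force the normalization, contrary to your closing remark: the $S^2$ computation only sees the difference $\mu-\nu$, so it determines $f_z$ on each irreducible block only up to the one-parameter ambiguity $g^\la$. In your verification scheme the corresponding burden lands on axiom (6), and that is exactly where your proposal is soft.

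Two concrete points to repair. First, your claim that multiplicativity of $f_z$ ``amounts to $(f_z\oti f_z)\circ\de=f_z$'' is wrong: that composition equals $f_{2z}$, and in any case multiplicativity of a functional on $A(G_q)$ is dual to the \emph{coproduct} $\De$ of $\Uqg$, not to $\de$; it follows from group-likeness of the implementing element $K_{2z\rho}$ (i.e.\ $\De(K_{2z\rho})=K_{2z\rho}\oti K_{2z\rho}$), which you should state and use instead. Second, your verification of (6) is circular as written: the orthogonality relations (\ref{eq:Haar-ortho}) are stated in terms of the true $F_{v_s}=(\id\oti f_1)(v(s))$, so you cannot ``check against them directly'' with your candidate until you have identified $\diag(q^{2(\mu,\rho)})$ with the canonical positive intertwiner $Q_{v_\la}$ normalized by $\Tr Q=\Tr Q^{-1}$. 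The intertwining property is your $S^2=\Ad K_{2\rho}$ computation, but the balanced-trace normalization requires precisely the symmetry
\[
\sum_{\mu}q^{2(\mu,\rho)}\dim L(\la)_\mu
=\sum_{\mu}q^{-2(\mu,\rho)}\dim L(\la)_\mu,
\]
proved via $w_0\rho=-\rho$ and $\dim L(\la)_\mu=\dim L(\la)_{w_0\mu}$. Once you add that computation, your verification of (6) closes and the uniqueness argument goes through; without it the proof is incomplete.
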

\begin{proof}
Let $\ph\in A(G_q)$.
Then
$\ta_{-i}(\ph)(x)=\ka^2(\ph)(x)=\ph(S^2(x))$
for $x\in \Uqg$.
If $x=X_{i_1}^-\cdots X_{i_k}^- K X_{j_1}^+\cdots X_{j_\ell}^+$
for some $i_1,\dots,i_k,j_1,\dots,j_\ell\in I$
and $K\in U_q(\mathfrak{h})$,
then
$S^2(x)=q_{i_1}^{-2}\cdots q_{i_k}^{-2}q_{j_1}^2\cdots q_{j_\ell}^2 x$,
where $U_q(\mathfrak{h})$ denotes the Hopf$*$-subalgebra
generated by $K_i$'s.
Since
$C_{\xi_\mu,\xi_\nu}^\la(x)$
can be non-zero
only if
$\mu=\nu+\al_{j_1}+\cdots+\al_{j_\ell}-\al_{i_1}-\cdots-\al_{i_k}$,
we obtain
$C_{\xi_\mu,\xi_\nu}^\la(S^2(x))=
q^{2(\rho,\mu-\nu)}
C_{\xi_\mu,\xi_\nu}^\la(x)$,
where we have used the equality $2(\rho,\al_i)=(\al_i,\al_i)$.
Thus
$
\ta_{-i}(C_{\xi_\mu,\xi_\nu}^\la)(x)
=q^{2(\rho,\mu-\nu)}
C_{\xi_\mu,\xi_\nu}^\la(x)$.
Since the map $\C\ni z\ra \ta_{iz}(a)\in A(G_q)$ is of exponential type
(see, for example, \cite[Proposition 11.11]{St}),
we obtain the following equality:
\[
\ta_t(C_{\xi_\mu,\xi_\nu}^\la)
=
q^{2(\mu-\nu,\rho)it}C_{\xi_\mu,\xi_\nu}^\la
\quad
\mbox{for all }t\in\R.
\]
We let $F_\la:=(\id\oti f_1)(C^\la)$.
Since
$(\id\oti \ta_t)(C^\la)=(F_\la^{it}\oti 1)C^\la (F_\la^{-it}\oti 1)$
and $C^\la$ is irreducible,
we have
$f_{it}(C_{\xi_\mu,\xi_\nu}^\la)
=\langle\xi_\nu,\xi_\mu\rangle q^{2(\mu,\rho)it}g^\la(it)$
for some entire function $g^\la$.
Thus $f_z\col A(G_q)\ra\C$ is given by
\[
f_z(C_{\xi_\mu,\xi_\nu}^\la)
=
\langle\xi_\nu,\xi_\mu\rangle
q^{2(\mu,\rho)z}g^\la(z)
\quad
\mbox{for all }
\xi_\mu\in L(\la)_\mu,
\
\xi_\nu\in L(\la)_\nu.
\]
Since $f_{z_1+z_2}=(f_{z_1}\oti f_{z_2})\circ\de$
for $z_1,z_2\in\C$,
we obtain $g^\la(z_1+z_2)=g^\la(z_1)g^\la(z_2)$.
Hence there exists $\th_\la\in\C$
such that $g^\la(z)=e^{\th_\la z}$
for all $z\in\C$.
The positivity of $F_\la^t=(\id\oti f_t)(C^\la)$
for $t\in\R$ implies that $\th_\la$ belongs to $\R$.
By definition of $f_z$,
we must have $\Tr(F_\la)=\Tr(F_\la^{-1})$.
Hence
\[
g^\la(1)
\sum_{\mu\in\Wt(\la)}q^{2(\mu,\rho)}\dim L(\la)_\mu
=
g^\la(-1)
\sum_{\mu\in\Wt(\la)}q^{-2(\mu,\rho)}\dim L(\la)_\mu.
\]
However, by $w_0^2=1$, $w_0\rho=-\rho$
and $\dim L(\la)_{\mu}=\dim L(\la)_{w_0\mu}$,
\begin{align*}
\sum_{\mu}q^{-2(\mu,\rho)}\dim L(\la)_\mu
&=
\sum_{\mu}q^{2(\mu,w_0\rho)}\dim L(\la)_\mu
=
\sum_{\mu}q^{2(w_0\mu,\rho)}\dim L(\la)_{\mu}
\\
&=
\sum_{\mu}q^{2(\mu,\rho)}\dim L(\la)_{w_0\mu}
=
\sum_{\mu}q^{2(\mu,\rho)}\dim L(\la)_\mu.
\end{align*}
Thus $g^\la(1)=g^\la(-1)$.
Hence $g^\la(1)^2=g^\la(1)g^\la(-1)=g^\la(0)=1$.
Since $g^\la(1)>0$,
$e^\th=g^\la(1)=1$.
This shows $\th_\la=0$ and $g^\la(z)=1$ for all $z\in\C$.
\end{proof}

Hence
for
$t\in\R$, $\la\in P_+$
and
$\mu,\nu\in\Wt(\la)$,
we have
\begin{equation}
\label{eq:modular}
\si_t^h(C_{\xi_\mu,\xi_\nu}^\la)
=
q^{2(\mu+\nu,\rho)it}
C_{\xi_\mu,\xi_\nu}^\la
\quad
\mbox{for all }
\xi_\mu\in L(\la)_\mu,
\
\xi_\nu\in L(\la)_\nu,
\end{equation}
and
the quantum dimension $\dim_q L(\la)$ is given by
\begin{equation}
\label{eq:quantum-dim}
\dim_q L(\la)
=
\sum_{\mu\in\Wt(\la)}q^{2(\mu,\rho)}\dim L(\la)_\mu.
\end{equation}

\subsection{Quantum subgroups of $G_q$}
Let us denote by $T$ the $n$-torus,
that is, $T=\{(t_1,\dots,t_n)\in\C^n\mid |t_i|=1,\ i=1,\dots,n\}$.
Recall that $n=|I|$.
By the coupling
$\langle\cdot,\cdot\rangle\col T\times P\ra\C$
defined by
\[
\langle(t_1,\dots,t_n),\mu\rangle
:=
t_1^{\mu(h_1)}\cdots t_n^{\mu(h_n)}
\quad
\mbox{for }
(t_1,\dots,t_n)\in T,
\
\mu\in P,
\]
we will regard $P$ as the dual group of $T$.
Let $\ev_t\col C(T)\ra\C$ be the evaluation map
at $t\in T$.
Then the restriction map $r_T\col \CG\ra\CT$ is defined
as follows:
\begin{equation}
\label{eq:rest}
\ev_t\circ\, r_T(C_{\xi_\mu,\xi_\nu}^\la)
=
\langle t,\mu\rangle
\langle\xi_\nu,\xi_\mu\rangle
\quad
\mbox{for all }t\in T,
\
\xi_\mu\in L(\la)_\mu,
\
\xi_\nu\in L(\la)_\nu.
\end{equation}
With this map, $T$ is a quantum subgroup of $G_q$
and called the \emph{maximal torus}.
Note that $r_T$ actually comes from the inclusion map
$U_q(\mathfrak{h})$ into $\Uqg$.

Let $\ga_t:=(\ev_t\circ\,r_T\oti\id)\circ\de$ be the left action
of $T$ on $\CG$.
Then we have
\begin{equation}
\label{eq:torusact}
\ga_t(C_{\xi_\mu,\xi_\nu}^\la)
=
\langle t,\mu\rangle
C_{\xi_\mu,\xi_\nu}^\la
\quad
\mbox{for all }t\in T,
\
\xi_\mu\in L(\la)_\mu,
\
\xi_\nu\in L(\la)_\nu.
\end{equation}

Let $U_{q_i}(su(2))$ be the Hopf$*$-subalgebra
of $\Uqg$ generated by
$\{1, K_i^{\pm1}, X_i^\pm\}$.
Then the canonical embedding
of $U_{q_i}(su(2))$ into $\Uqg$
gives a surjective $*$-homomorphism
$r_i\col \CG\to C(SU_{q_i}(2))$.
It turns out that $r_i$ is a restriction map.
So, $SU_{q_i}(2)$ is a quantum subgroup of $G_q$.

\subsection{Classification of irreducible representations of $C(G_q)$}
\label{subsect:class-irr-Soibelman}
In this subsection,
we recall theory of irreducible representations
of $C(G_q)$ as a C$^*$-algebra.

For $i\in I$,
set $H_{s_i}:=\ell^2(\Z_+)$
with $\{\vep_k\}_{k\in\Z_+}$, the standard orthonormal base.
Let us introduce an irreducible representation
$\pi_i\col C(SU_{q_i}(2))\ra B(H_{s_i})$ as follows:
\begin{equation}
\label{eq:pi-x-u}
\pi_i(x_i)\vep_k=\sqrt{1-q_i^{2k+2}}\vep_{k+1},
\quad
\pi_i(u_i)\vep_k=q_i^k\vep_k,
\end{equation}
\[
\pi_i(v_i)\vep_k=-q_i^{k+1}\vep_k,
\quad
\pi_i(y_i)\vep_k=\sqrt{1-q_i^{2k}}\vep_{k-1}
\quad
\mbox{for all }
k\geq0,
\]
where
\[
\begin{pmatrix}
x_i&u_i\\
v_i&y_i
\end{pmatrix}
:=
\begin{pmatrix}
r_i(C_{\om_i,\,\om_i}^{\om_i})
&
r_i(C_{\om_i,\,s_i\om_i}^{\om_i})
\\
r_i(C_{s_i\om_i,\,\om_i}^{\om_i})
&
r_i(C_{s_i\om_i,\,s_i\om_i}^{\om_i})
\end{pmatrix}
.
\]
Since $\pi_i(u_i)$ generates an atomic maximal abelian
subalgebra of $B(H_{s_i})$
and $\pi_i(x_i)$ is acting on $H_{s_i}$
as a weighted unilateral shift,
it turns out that $\pi_i$ is irreducible.

Note that $\pi(u_i)$ and $\pi(v_i)$ are compact operators.
Let $p_i$ be the quotient map from $B(H_{s_i})$
onto the Calkin algebra $B(H_{s_i})/K(H_{s_i})$.
Then we obtain $p_i(\pi_i(x_i))=p_i(S)$ and $p_i(\pi_i(u_i))=0$,
where $S$ denotes the unilateral shift on $H_{s_i}$.
This shows that $\Ima p_i\circ\pi_i$
coincides with the commutative C$^*$-algebra
${\rm C}^*(p_i(S))\cong C(S^1)$.

Let $\om_i\col \Ima p_i\circ\pi_i\ra\C$ be the character
defined by $\om_i(p_i(S))=1$.
Then the composition $\om_i\circ p_i\circ\pi_i$ is nothing but
the counit of $C(SU_{q_i}(2))$.
Let us denote by $\eta_i$ the map $\om_i\circ p_i$.
Then we obtain $\vep=\eta_i\circ\pi_i$.

Since $\pi_i$ is irreducible,
$\pi_{s_i}:=\pi_i\circ r_i$ is an irreducible representation of $\CG$.
The following theorem due to Soibel'man says that
those $\pi_{s_i}$ play a role of building blocks of
any irreducible representation of $\CG$.
See \cite[Theorem 3.4, 5.7]{S-repn}
or \cite[Theorem 5.3.3, 6.2.7, Chapter 3]{KS} for its proof.
Also see \cite[Theorem 4]{S-irred} and \cite[Theorem 3.1]{VS}
for the statement in the case of $SU_q(N)$.

For $w\in W$,
$I_w$ denotes the norm-closed ideal
in $\CG$ that is
generated by $C_{\xi,\xi_\la}^\la$
for $\la\in P_+$
such that $\langle U_q(\mathfrak{b}_+)L(\la)_{w\la},\xi\rangle=0$,
where $U_q(\mathfrak{b}_+)$ is the subalgebra of $U_q(\mathfrak{g})$
generated by $\{1,K_i^{\pm1},X_i^+\}_{i\in I}$.

\begin{thm}[Soibel'man]
\label{thm:irr-repn-G}
Let $G_q$ be as before.
\begin{enumerate}
\item
For any irreducible representation $\pi$ of $\CG$,
there exists a unique $w\in W$
such that $I_w\subs \ker\pi$;

\item
Let $\pi$ be an irreducible representation of $\CG$
corresponding to $w\in W$.
Then there exists $t\in T$
such that
$\pi$ is unitarily equivalent to the following representation:
\[
\pi_{t,w}:=(\pi_t\oti \pi_{s_{i_1}}\oti\cdots\oti \pi_{s_{i_k}})\circ\de^{(k)},
\]
where $\pi_t:=\ev_t\circ\, r_T$,
and $w=s_{i_1}\cdots s_{i_k}$ is a minimal expression of $w$.
The representation space is
$H_w:=H_{s_{i_1}}\oti\cdots \oti H_{s_{i_k}}$.
\end{enumerate}
\end{thm}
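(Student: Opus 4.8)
The plan is to reduce everything to the rank-one quantum groups $SU_{q_i}(2)$, whose representation theory is elementary, and then to induct on the word length $\ell(w)$, realizing every irreducible as an iterated tensor product along a reduced expression. I would begin with $C(SU_{q_i}(2))$ itself. By the explicit formulas \eqref{eq:pi-x-u}, $\pi_i(u_i)$ and $\pi_i(v_i)$ are compact while $\pi_i(x_i)$ is a weighted unilateral shift, so $\pi_i$ is faithful, its image contains $K(H_{s_i})$, and the quotient by the compacts is the commutative algebra $C^*(p_i(S))$, which via the torus restriction is identified with $C(T)$ for the rank-one torus; thus one gets the exact sequence $0\ra K(H_{s_i})\ra C(SU_{q_i}(2))\ra C(T)\ra 0$. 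Hence the irreducible representations of $C(SU_{q_i}(2))$ are exactly the characters $\pi_t$ (attached to $w=e$) and the infinite-dimensional $\pi_t\oti\pi_i$, i.e.\ $\pi_i$ twisted by a torus character through $\de$ (attached to $w=s_i$). This is the base case of both assertions.

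Next I would show that the representation $\pi_{t,w}:=(\pi_t\oti\pi_{s_{i_1}}\oti\cdots\oti\pi_{s_{i_k}})\circ\de^{(k)}$ attached to a reduced word $w=s_{i_1}\cdots s_{i_k}$ depends, up to unitary equivalence, only on $w$ and $t$. Since any two reduced words for $w$ are connected by braid moves, this reduces to invariance under a single braid relation, hence to a rank-two computation; there the two representations attached to the two reduced words of the longest element of the rank-two Weyl group are seen to be unitarily equivalent, either by a direct computation with the weighted shifts or, more conceptually, via the unitaries implementing the quantum Weyl group.

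Irreducibility of $\pi_{t,w}$ I would obtain by induction on $\ell(w)$: writing $w=s_{i_1}v$ with $\ell(v)=\ell(w)-1$ and using coassociativity to present $\pi_{t,w}$ as a twisted tensor product $(\pi_{s_{i_1}}\oti\pi_{t',v})\circ\de$, one combines the inductive hypothesis with $\pi_{s_{i_1}}(\CG)\supseteq K(H_{s_{i_1}})$ and with the fact that appropriate matrix coefficients of $L(\om_{i_1})$ act on the first tensor factor as a weighted shift, so that a short commutant argument — the relevant commuting number operators have simple joint spectrum on the cyclic vector $\vep_0\oti\cdots\oti\vep_0$ — forces $(\pi_{t,w},\pi_{t,w})=\C$. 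For inequivalence of the $\pi_{t,w}$ and for assertion (1), I would compute $\ker\pi_{t,w}$: on a generator $C^\la_{\xi,\xi_\la}$ of $\CG$, applying $\de^{(k)}$ and then the $\pi_{s_{i_j}}$'s moves the highest weight vector $\xi_\la$ into $U_q(\mathfrak{b}_+)L(\la)_{w\la}$, so the coefficient vanishes precisely when $\xi\perp U_q(\mathfrak{b}_+)L(\la)_{w\la}$; this yields $I_w\subs\ker\pi_{t,w}$, with $w$ then recovered from $\ker\pi_{t,w}$ and $t$ from the central character of the $\pi_t$-factor.

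The remaining step — that every irreducible representation $\pi$ of $\CG$ is some $\pi_{t,w}$ — is the hardest. I would prove that $\CG$ is postliminal by exhibiting a composition series indexed by the Bruhat order on $W$ whose subquotient at $w$ is isomorphic to $C(T)\oti K(H_w)$: the closed ideals built from the matrix coefficients surviving modulo the various $I_w$ furnish such a series, and one identifies the $w$-th subquotient by transporting the rank-one picture of the first paragraph through $\de^{(k)}$ using the restriction maps $r_i$, inducting on $\ell(w)$. Since a separable type I C$^*$-algebra is recovered spectrally from its composition series and $C(T)\oti K(H_w)$ has spectrum $T$ with a single irreducible class, any irreducible $\pi$ factors through exactly one subquotient, at a unique $w$, where it must equal $\ev_t\oti\id$, that is $\pi_{t,w}$; this gives existence and uniqueness in (1) and the form in (2). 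I expect the real obstacle to be precisely the construction and identification of this composition series — controlling how the ideals $I_w$ are nested and checking the subquotient isomorphisms — which is where the fine structure of the $\Uqg$-modules $L(\la)$ (weight-space dimensions, and the action of $U_q(\mathfrak{b}_+)$ on extremal weight spaces) genuinely enters.
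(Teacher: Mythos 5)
First, a point of comparison: the paper gives no proof of Theorem \ref{thm:irr-repn-G} at all — it is quoted as Soibel'man's theorem, with the proof delegated to \cite{S-repn} and \cite[Chapter 3]{KS}. So there is no in-paper argument to measure yours against; what you have written is, in substance, the standard argument from those references (independence of the reduced expression via braid moves and a rank-two verification, irreducibility and kernel computations by induction on $\ell(w)$, and exhaustion via a composition series indexed by the Bruhat order with subquotients $C(T)\oti K(H_w)$). As a reconstruction of the literature proof, the architecture is the right one, and you correctly identify the nesting of the ideals $I_w$ and the identification of the subquotients as the technical core.

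There is, however, a genuine error in your base case. The representation $\pi_i$ is \emph{not} faithful on $C(SU_{q_i}(2))$, and the exact sequence $0\ra K(H_{s_i})\ra C(SU_{q_i}(2))\ra C(S^1)\ra 0$ is false. Indeed, by (\ref{eq:pi-x-u}) the operator $\pi_i(u_i)$ is diagonal with real entries, hence self-adjoint, so $u_i-u_i^*\in\ker\pi_i$; but $u_i\neq u_i^*$, since by (\ref{eq:torusact}) the left translation $\ga_t$ scales $u_i=r_i(C^{\om_i}_{\om_i,s_i\om_i})$ by $\langle t,\om_i\rangle$ and $u_i^*$ by $\overline{\langle t,\om_i\rangle}$, and these differ for generic $t$. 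Equivalently: if $K(H_{s_i})$ were an ideal of $C(SU_{q_i}(2))$ with commutative quotient, the algebra would have exactly one infinite-dimensional irreducible representation up to equivalence, contradicting the conclusion you correctly state two lines later, namely that the $\pi_{t,s_i}$, $t\in S^1$, form a circle of pairwise inequivalent infinite-dimensional irreducibles. The correct rank-one statement is that the kernel of the restriction map onto the diagonal torus is isomorphic to $C(S^1)\oti K(H_{s_i})$, realized by the field $\{\pi_{t,s_i}\}_{t}$, giving $0\ra C(S^1)\oti K(H_{s_i})\ra C(SU_{q_i}(2))\ra C(S^1)\ra 0$. This is exactly the $\ell(w)=1$ instance of the composition series you invoke in your final paragraph, so the repair is local; but as written your first paragraph asserts a false faithfulness claim and is internally inconsistent with the classification it is meant to establish, and the faithfulness of $\pi_i$ is then silently reused in the induction (``$\pi_{s_{i_1}}(\CG)\supseteq K(H_{s_{i_1}})$'' is fine, but any step that identifies $C(SU_{q_i}(2))$ with $\pi_i(C(SU_{q_i}(2)))$ is not).
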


If $t=e$ (the neutral element of $T$),
then we will denote by $\pi_w$ for $\pi_{e,w}$.
By definition, we obtain $\pi_{t,w}=\pi_w\circ\ga_t$.

For a minimal expression $w=s_{i_1}\cdots s_{i_k}$,
we let $\eta_w:=\eta_{i_1}\oti\cdots \oti\eta_{i_k}$.
Then $\eta_w\circ\pi_w=\vep$.
By definition of $\de^{(n)}$ for $n\in\N$,
we have $\pi_w=(\pi_{w_1}\oti \pi_{w_2})\circ\de$
for $w=w_1w_2$ with $\ell(w)=\ell(w_1)+\ell(w_2)$.

\section{Quantum flag manifolds}
In this section,
we will study the quantum flag manifold $\CTG$ and
its measure theoretic analogue $\lTG$.
In particular,
we will prove the factoriality of $\lTG$.
Then we will present an alternative proof of
Dijkhuizen--Stokman's result for $T\subset G_q$.
Our proof involves knowledge of Poisson boundary,
but it is relatively short.
On recent development of this subject in a more general setting,
readers are referred to \cite{NeTu} and references therein.
As applications,
we will derive the quantum Weyl dimension formula of $L(\la)$
and also determine the intrinsic group of $G_{q,\Om}$,
the 2-cocycle deformation associated with $\lTG$.

\subsection{Classification of irreducible representations of $\CTG$}
\label{subsect:irreducible}

Let $\bG$ be a compact quantum group.
In \cite{T-Banach},
it is conjectured that
the classical Poisson boundary
$H_{\rm class}^\infty(\widehat{\bG},\mu)$
could coincide with
the center of the Poisson boundary
$Z(H^\infty(\widehat{\bG},\mu))$.
(See \cite{Iz-Poisson,INT,T-Poisson} for a detail
of theory of a Poisson boundary.)
In particular,
if the fusion rule of $\bG$ is commutative,
the conjecture asks
the factoriality of the Poisson boundary.
It has been verified
that the conjecture is true for $SU_q(2)$,
$A_o(F)$ \cite{TV,VV0}
and $A_u(F)$ \cite{VV}.
To these examples,
we can show that
a stronger property that
the canonical $\bG$-action on a Poisson boundary
is ``approximately inner.''
We will show the factoriality of $\lTG$ as follows.

For $\la\in P_+$,
we let $a_\la:=C_{\la,w_0\la}^\la$.
Then they generate a commutative C$^*$-subalgebra
of $\CG$.
See \cite[Corollary 2.1.5, Proposition 2.2.4, 2.3.2, Chapter 3]{KS}
for its proof.
Readers should note that
several typographical errors concerning signs are found
in \cite[Proposition 2.1.4, Corollary 2.1.5, Proposition 2.3.2, Chapter 3]{KS}.

\begin{thm}
\label{thm:factor}
The following statements hold:
\begin{enumerate}
\item 
$\ga$ is a faithful action of $T$ on the center
$Z(\lG)$;

\item
$L^\infty(T\backslash G_q)$
is the type I$_\infty$ factor.
\end{enumerate}
\end{thm}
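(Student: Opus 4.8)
The plan is to read both statements off a description of $\lG$ obtained by combining Soibel'man's classification (Theorem~\ref{thm:irr-repn-G}) with the elements $a_\la=C_{\la,w_0\la}^\la$. Since $w_0$ has maximal length, the lowest weight vector of $L(\la)$ is cyclic for $U_q(\mathfrak{b}_+)$, so $U_q(\mathfrak{b}_+)L(\la)_{w_0\la}=L(\la)$ for every $\la\in P_+$ and hence Soibel'man's ideal $I_{w_0}$ is $\{0\}$; by Theorem~\ref{thm:irr-repn-G} the assignment $a\mapsto(t\mapsto\pi_{t,w_0}(a))$ is then a faithful $*$-homomorphism of $\CG$ into the constant field $\int_T^\oplus B(H_{w_0})\,dt=L^\infty(T)\otimes B(H_{w_0})$, and using the KMS relation \eqref{eq:modular} together with faithfulness of $h$ one identifies the GNS representation of $h$ with $\int_T^\oplus\pi_{t,w_0}\,dt$. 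Because $\pi_{t,w_0}=\pi_{w_0}\circ\ga_t$ and $\langle t,\om_i\rangle=t_i$, the element $a_{\om_i}$ corresponds to $\langle\,\cdot\,,\om_i\rangle\otimes\pi_{w_0}(a_{\om_i})$, where $\pi_{w_0}(a_{\om_i})$ is a positive diagonal operator on $H_{w_0}=H_{s_{i_1}}\otimes\cdots\otimes H_{s_{i_k}}$ (this uses \eqref{eq:pi-x-u} together with the normality of $a_{\om_i}$, which holds since the commutative C$^*$-algebra generated by the $a_\la$ is $*$-closed). Passing to polar parts, the unitary part of $a_{\om_i}$ in $\lG$ is $\langle\,\cdot\,,\om_i\rangle\otimes1$; as $\{\om_i\}_{i\in I}$ is a $\Z$-basis of the character group $P$ of $T$, these unitaries generate a copy of $L^\infty(T)$ on which $\ga$ acts by translation, while irreducibility of each $\pi_{t,w_0}$ forces the von Neumann algebra generated by $\CG$ fibrewise to be all of $B(H_{w_0})$. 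It follows that $\lG=L^\infty(T)\otimes B(H_{w_0})$, a constant field of type $\mathrm{I}_\infty$ factors over $T$ with $Z(\lG)=L^\infty(T)\otimes1$, and that $\ga$ restricts to $Z(\lG)$ as the translation action of $T$ on $L^\infty(T)$ — a faithful, indeed free and ergodic, action. This is (1).

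For (2), recall that $\lTG=(\lG)^{\ga}$ by \eqref{eq:torusact}. By part (1) each $\ga_t$ covers the translation $s\mapsto ts$ of the base $T$, hence carries the fibre over $s$ onto the fibre over $ts$; since every fibre is $B(H_{w_0})$ and every automorphism of $B(H_{w_0})$ is inner, we may write $\ga_t=\int_T^\oplus\Ad(v_t(s))\,ds$ for unitaries $v_t(s)\in B(H_{w_0})$. Then $x=\int_T^\oplus x(s)\,ds$ is $\ga$-fixed precisely when $x(ts)=\Ad(v_t(s))(x(s))$ for a.e.\ $s,t$, so evaluation at $s=e$ is a $*$-isomorphism of $(\lG)^{\ga}=\lTG$ onto $B(H_{w_0})$. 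Hence $\lTG$ is the type $\mathrm{I}_\infty$ factor.

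The genuinely nontrivial step is the structural claim $\lG=L^\infty(T)\otimes B(H_{w_0})$ — that is, the fact that the Haar state ``sees'' only Soibel'man's principal series $\{\pi_{t,w_0}\}_{t\in T}$ and not the shorter series $\{\pi_{t,w}\}_{w\neq w_0}$. Together with the positivity and diagonality of $\pi_{w_0}(a_{\om_i})$, this is where the explicit description \eqref{eq:pi-x-u} of the building blocks is used, and where knowledge of the Poisson boundary — concretely, the realization $\lTG\cong H^\infty(\widehat{G_q},\mu)$ — enters to handle the measure-theoretic bookkeeping. Granting that structural input, (1) and (2) follow as above.
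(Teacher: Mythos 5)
Your outline is built entirely on the structural claim that the GNS representation of the Haar state is quasi-equivalent to $\int_T^\oplus\pi_{t,w_0}\,dt$, so that $\lG\cong L^\infty(T)\oti B(H_{w_0})$ with $Z(\lG)=L^\infty(T)\oti 1$ and $\ga$ acting by translation on the base. But that claim is not proved anywhere in your argument --- you explicitly write ``granting that structural input'' --- and it is strictly stronger than the theorem you are trying to prove: once you have it, both (1) and (2) are immediate, so deferring it is circular. Faithfulness of the field $t\mapsto\pi_{t,w_0}$ on $\CG$ (via $I_{w_0}=0$ and Soibel'man's theorem) only gives you a faithful representation; it says nothing about which quasi-equivalence class the Haar state selects, i.e.\ why $h$ ``sees'' only the longest Weyl cell and no mass on the shorter series $\pi_{t,w}$, $w\neq w_0$. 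The KMS relation (\ref{eq:modular}) alone does not settle this, and your appeal to the Poisson boundary for ``measure-theoretic bookkeeping'' is not an argument --- in the paper the Poisson integral $\Theta\col\lTG\to H^\infty(\widehat{G}_q)$ is used for a quite different and much more limited purpose, namely to show that any element of $Z(\lG)\cap\lTG$ is fixed by the coproduct of $R(G_q)$ and hence scalar (central ergodicity of $\ga$). The decomposition you want to start from does appear in the paper (Theorem \ref{thm:Haar-state-density}), but it is \emph{derived} there as a consequence of Theorem \ref{thm:factor}, not the other way around.

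For comparison, the actual proof is far more economical and avoids the direct-integral machinery entirely. For (1), the commutation relations of the matrix coefficients with $a_\La=C^\La_{\La,w_0\La}$ show that the partial isometry $v_\La$ in the polar decomposition $a_\La=v_\La|a_\La|$ commutes with every $C^\la_{\xi_\mu,\xi_\nu}$, hence lies in $Z(\lG)$ and satisfies $\ga_t(v_\La)=\langle t,\La\rangle v_\La$; since $P_+$ generates $\widehat{T}=P$, this already gives faithfulness. For (2), central ergodicity $Z(\lG)^\ga=Z(\lG)\cap\lTG=\C$ (from the Poisson integral) combines with (1) and the weight decomposition (\ref{eq:torusact}) to give $\lG=Z(\lG)\vee\lTG$, whence $Z(\lTG)\subs Z(\lG)\cap\lTG=\C$; the type I$_\infty$ statement then comes from the identification of $\lTG$ with the (atomic) Poisson boundary. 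If you want to salvage your route, you would have to prove the Plancherel-type decomposition of $h$ honestly, e.g.\ by exhibiting an explicit bi-invariant state on $\int_T^\oplus B(H_{w_0})\,dt$ and invoking uniqueness of the Haar state; that is genuine work and is precisely the content you have omitted.
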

\begin{proof}
(1).
Let $\la,\La\in P_+$, $\mu,\nu\in \Wt(\la)$,
$\xi_\mu\in L(\la)_\mu$ and $\xi_\nu\in L(\la)_\nu$.
Then by \cite[Corollary 2.1.5, Proposition 2.3.2, Chapter 3]{KS},
we have
\[
C_{\xi_\mu,\xi_\nu}^\la a_\La
=
q^{(\La,-\mu+w_0\nu)}a_\La C_{\xi_\mu,\xi_\nu}^\la,
\quad
C_{\xi_\mu,\xi_\nu}^\la a_\La^*
=
q^{(\La,-\mu+w_0\nu)}
a_\La^* C_{\xi_\mu,\xi_\nu}^\la.
\]
Thus
\begin{equation}
\label{eq:CaLa}
C_{\xi_\mu,\xi_\nu}^\la |a_\La|
=
q^{(\La,-\mu+w_0\nu)}
|a_\La| C_{\xi_\mu,\xi_\nu}^\la.
\end{equation}
Let $a_\La=v_\La|a_\La|$ be the polar decomposition
in $\lG$.
Then
$v_\La C_{\xi_\mu,\xi_\nu}^\la=C_{\xi_\mu,\xi_\nu}^\la v_\La$.
Hence $v_\La$ belongs to the center of $\lG$.
Then $\ga_t(v_\La)=\langle t, \La\rangle v_\La$
for $t\in T$.
Thus $\ga$ is faithful on $Z(\lG)$.

(2).
This is a direct consequence
of \cite[Theorem 4.7]{T-Banach}.
We will sketch the proof for readers' convenience.
Let $H^\infty(\widehat{G}_q)$ be the Poisson boundary
of $\widehat{G}_q$
and
$\Theta\col\lG\ra R(G_q)$
the Poisson integral
defined by
$\Theta(x):=(\id\oti h)(V^*(1\oti x)V)$
for $x\in\lG$.
Then $\Theta$
is a $\widehat{G}_q$-$G_q$-equivariant
isomorphism from $\lTG$ onto $H^\infty(\widehat{G}_q)$.
See \cite[Theorem 5.10]{Iz-Poisson},
\cite[Theorem A,B]{INT}
and
\cite[Corollary 4.11]{T-Poisson}
for the proof.

Let $x\in Z(\lG)\cap\lTG$.
Then $\Theta(x)$ is fixed by the coproduct of $R(G_q)$,
and $\Theta(x)$ is a scalar.
Hence $Z(\lG)^\ga=Z(\lG)\cap\lTG=\C$
which shows the central ergodicity of $\ga$.

By (1), $Z(\lG)$ is generated by unitaries $v_\la$,
$\la\in P_+$,
such that $\ga_t(v_\la)=\langle t,\la\rangle v_\la$
for $t\in T$.
From (\ref{eq:torusact}),
it turns out that $\lG=Z(\lG)\vee \lTG$.
Hence $Z(\lTG)\subs Z(\lG)\cap \lTG=\C$.
\end{proof}

Note that by ergodicity,
$v_\La$ in the above proof
must be a unitary.
As a corollary, we have the following.

\begin{cor}
The Poisson boundary $H^\infty(\widehat{G}_q)$
is the type I$_\infty$ factor.
\end{cor}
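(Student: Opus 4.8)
The plan is to deduce this immediately from Theorem \ref{thm:factor}(2) together with the realization of the Poisson boundary recalled in its proof, so the argument is essentially a transport of structure. First I would invoke the Poisson integral $\Theta\col\lG\ra R(G_q)$, $\Theta(x)=(\id\oti h)(V^*(1\oti x)V)$, and the known fact \cite{Iz-Poisson,INT,T-Poisson} that $\Theta$ restricts to a $\widehat{G}_q$-$G_q$-equivariant isomorphism of von Neumann algebras from $\lTG$ onto the Poisson boundary $H^\infty(\widehat{G}_q)$. Since any isomorphism of von Neumann algebras preserves factoriality as well as the type classification, it suffices to establish the corresponding assertions for $\lTG$.

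Those assertions are precisely the content of Theorem \ref{thm:factor}(2): $\lTG$ is the type I$_\infty$ factor. Applying $\Theta$ then shows that $H^\infty(\widehat{G}_q)$ is the type I$_\infty$ factor, which is the claim. There is no genuine obstacle at this stage; the substantive work—the central ergodicity of $\ga$ via the Poisson integral being scalar on $\widehat{G}_q$-fixed elements, and the decomposition $\lG=Z(\lG)\vee\lTG$ forcing $Z(\lTG)=\C$—has already been carried out in the proof of Theorem \ref{thm:factor}, so this corollary is merely a translation of that result into the language of Poisson boundaries.
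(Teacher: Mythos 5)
Your argument is exactly the paper's: the corollary follows from Theorem \ref{thm:factor}(2) by transporting the type I$_\infty$ factor property through the Poisson integral $\Theta\col\lTG\ra H^\infty(\widehat{G}_q)$, which is the $\widehat{G}_q$-$G_q$-equivariant isomorphism already invoked in the proof of that theorem. This is correct and there is nothing to add.
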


Now let us present our approach to
Dijkhuizen--Stokman's result.

\begin{lem}
\label{lem:faithful}
The representation $\pi_{w_0}$ is faithful on $C(T\backslash G_q)$.
\end{lem}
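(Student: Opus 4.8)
The plan is to show that $\ker\bigl(\pi_{w_0}|_{C(T\backslash G_q)}\bigr)=\{0\}$ by proving that any $x\in C(T\backslash G_q)$ with $\pi_{w_0}(x)=0$ is annihilated by \emph{every} irreducible representation of $\CG$; then $x=0$, because the irreducible representations of a C$^*$-algebra separate points and, by Soibel'man's classification (Theorem \ref{thm:irr-repn-G}), all of them are of the form $\pi_{t,w}$.

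The first step, which is valid on all of $\CG$, is the inclusion $\ker\pi_{w_0}\subseteq\ker\pi_w$ for every $w\in W$. Since $w$ is contained in $w_0$, that is, $\ell(w_0)=\ell(w)+\ell(w^{-1}w_0)$, I would fix a reduced expression of $w_0$ extending a reduced expression of $w$; for this choice one has $\pi_{w_0}=(\pi_w\oti\pi_{w^{-1}w_0})\circ\de$. The functional $\eta_{w^{-1}w_0}$ is a state on the C$^*$-algebra $\pi_{w^{-1}w_0}(\CG)$ with $\eta_{w^{-1}w_0}\circ\pi_{w^{-1}w_0}=\vep$, so the slice map $\id\oti\eta_{w^{-1}w_0}\col B(H_w)\oti\pi_{w^{-1}w_0}(\CG)\ra B(H_w)$ is a well-defined completely positive contraction. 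I then claim $\pi_w=(\id\oti\eta_{w^{-1}w_0})\circ(\pi_w\oti\pi_{w^{-1}w_0})\circ\de$ as maps on $\CG$: both sides are bounded, and on the dense $*$-subalgebra $A(G_q)$ the identity follows from a Sweedler-type computation using $\eta_{w^{-1}w_0}\circ\pi_{w^{-1}w_0}=\vep$ together with the counit identity $(\id\oti\vep)\circ\de=\id$, hence it extends to $\CG$ by continuity. Consequently $\pi_{w_0}(x)=0$ implies $(\pi_w\oti\pi_{w^{-1}w_0})(\de(x))=0$ and therefore $\pi_w(x)=0$. Since all irreducible representations of $\CG$ over a fixed $w$ are unitarily equivalent, $\ker\pi_w$ does not depend on the reduced expression, so this conclusion is unambiguous.

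For the conclusion, let $x\in C(T\backslash G_q)$ with $\pi_{w_0}(x)=0$. Because $C(T\backslash G_q)$ consists of left $T$-invariant elements, we have $\ga_t(x)=x$ for every $t\in T$ (indeed $\ga_t=(\ev_t\oti\id)\circ(r_T\oti\id)\circ\de$ and $(r_T\oti\id)(\de(x))=1\oti x$); using $\pi_{t,w}=\pi_w\circ\ga_t$, this gives $\pi_{t,w}(x)=\pi_w(x)$, which is $0$ for all $w\in W$ by the previous step. Thus $\pi_{t,w}(x)=0$ for all $t\in T$ and $w\in W$, and by Theorem \ref{thm:irr-repn-G} every irreducible representation of $\CG$ is unitarily equivalent to some $\pi_{t,w}$. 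Hence $x$ lies in the kernel of every irreducible representation of $\CG$, so $x=0$, and $\pi_{w_0}$ is faithful on $C(T\backslash G_q)$.

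The point that I expect to require the most care is the C$^*$-level identity $\pi_w=(\id\oti\eta_{w^{-1}w_0})\circ(\pi_w\oti\pi_{w^{-1}w_0})\circ\de$: one has to set up $\id\oti\eta_{w^{-1}w_0}$ as an honest slice map (using that $\eta_{w^{-1}w_0}$ is a state on the C$^*$-algebra $\pi_{w^{-1}w_0}(\CG)$, not merely a functional on a dense subalgebra) and then promote the purely algebraic computation on $A(G_q)$ to all of $\CG$ by boundedness; everything else is formal. An alternative, avoiding Theorem \ref{thm:irr-repn-G}, would be to use an integral formula for the Haar state in terms of the $\pi_{t,w_0}$ and the faithfulness of $h$, but the route above keeps the argument self-contained at this stage.
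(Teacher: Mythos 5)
Your proposal is correct and follows essentially the same route as the paper: factor $\pi_{w_0}=(\pi_w\oti\pi_{w'})\circ\de$ with $w'=w^{-1}w_0$, apply the character $\eta_{w'}$ satisfying $\eta_{w'}\circ\pi_{w'}=\vep$ to get $\pi_w=(\id\oti\eta_{w'})\circ\pi_{w_0}$ and hence $\pi_w(x)=0$ for all $w$, then use that the family $\{\pi_{t,w}\}$ separates points of $\CG$ and restricts to $\pi_w$ on $C(T\backslash G_q)$. The extra care you take in setting up the slice map $\id\oti\eta_{w'}$ at the C$^*$-level is a reasonable elaboration of a step the paper states without comment.
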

\begin{proof}
Let $x\in C(T\backslash G_q)$ with $\pi_{w_0}(x)=0$.
Let $w\in W\setminus\{w_0\}$.
Then $w':=w^{-1}w_0$ satisfies $\ell(w_0)=\ell(w)+\ell(w')$.
Hence we may assume that
$\pi_{w_0}=(\pi_{w}\oti\pi_{w'})\circ\de$.
Using the character $\eta_{w'}\col \Ima\pi_{w'}\ra\C$
with $\eta_{w'}\circ\pi_{w'}=\vep$,
we have
$(\id\oti\eta_{w'})\circ\pi_{w_0}=\pi_{w}$.
Thus $\pi_{w}(x)=0$ for any $w\in W$.
Since the family $\{\pi_{t,w}\}_{t\in T,w\in W}$ separates
the elements of $C(G_q)$
and $\pi_{t,w}=\pi_{w}$ on $C(T\backslash G_q)$,
we have $x=0$.
\end{proof}

\begin{lem}
For any $\la\in P_{++}$,
$|a_\la|^n$ uniformly converges
to the same minimal projection $p_0$ in $\CTG$
as $n\to\infty$.
\end{lem}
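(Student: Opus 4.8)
The plan is to pass to the faithful representation $\pi_{w_0}$ of $\CTG$ (Lemma~\ref{lem:faithful}), which acts on $H_{w_0}=H_{s_{i_1}}\oti\cdots\oti H_{s_{i_N}}$ for a reduced expression $w_0=s_{i_1}\cdots s_{i_N}$, $N=\ell(w_0)$, and to analyse the operators $\pi_{w_0}(|a_\la|^n)$ there; since a faithful $*$-homomorphism of C$^*$-algebras is isometric, norm convergence and minimality can be read off from the image. Note first that $|a_\la|=(a_\la^*a_\la)^{1/2}$ does lie in $\CTG$, since $a_\la^*a_\la$ is fixed by the $T$-action $\ga$ (because $|\langle t,\la\rangle|=1$) and $\CTG$ is closed under functional calculus. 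Two further preliminaries: because the $a_\la$ generate a commutative C$^*$-subalgebra, each $a_\la$ is normal and the $|a_\la|$ ($\la\in P_+$) generate a commutative C$^*$-algebra; and, writing $a_\la a_\mu$ as a matrix coefficient of $\pi_\la\oti\pi_\mu$ and observing that $\xi_\la\oti\xi_\mu$ and $\xi_{w_0\la}\oti\xi_{w_0\mu}$ span the one-dimensional highest- and lowest-weight spaces of the Cartan component $L(\la+\mu)\subs L(\la)\oti L(\mu)$, one gets $a_\la a_\mu=\om\,a_{\la+\mu}$ for a scalar $\om$ with $|\om|=1$; hence $|a_\la||a_\mu|=|a_{\la+\mu}|$ (moduli of commuting normals), and in particular $|a_\la|^n=|a_{n\la}|$.

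The main step is to compute $\pi_{w_0}(|a_\la|)$. Using $\pi_{w_0}=(\pi_{s_{i_1}}\oti\cdots\oti\pi_{s_{i_N}})\circ\de^{(N-1)}$ and expanding $\de^{(N-1)}(C_{\la,w_0\la}^\la)$ as a sum over weight chains $\la=\mu_0,\mu_1,\dots,\mu_N=w_0\la$, one inserts the explicit formulas~(\ref{eq:pi-x-u}) for the building blocks and tracks which chains act nontrivially on the vacuum vector $\vep_0\oti\cdots\oti\vep_0$ -- this is precisely the bookkeeping behind Soibel'man's description of the $\pi_w$, cf.\ the cited results of \cite[Chapter~3]{KS}. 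The outcome is that, in the standard orthonormal basis $\{\vep_{k_1}\oti\cdots\oti\vep_{k_N}\}$ of $H_{w_0}$, the (normal) operator $\pi_{w_0}(|a_\la|)$ is diagonal, with eigenvalue $q^{c_1k_1+\cdots+c_Nk_N}$ at $\vep_{k_1}\oti\cdots\oti\vep_{k_N}$, where the $c_\ell>0$ depend on $\la$; in particular this eigenvalue equals $1$ exactly at the vacuum $(k_1,\dots,k_N)=(0,\dots,0)$. Consequently $\pi_{w_0}(|a_\la|^n)$ converges in operator norm, at the geometric rate $q^{n\min_\ell c_\ell}$, to the rank-one orthogonal projection $P_0$ onto $\C(\vep_0\oti\cdots\oti\vep_0)$.

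Pulling this back: since $\pi_{w_0}(\CTG)$ is norm-closed we have $P_0\in\pi_{w_0}(\CTG)$, and by isometry of $\pi_{w_0}$ the element $p_0^{(\la)}:=\pi_{w_0}^{-1}(P_0)\in\CTG$ satisfies $|a_\la|^n\to p_0^{(\la)}$ in norm; moreover $P_0$ has rank one, so $p_0^{(\la)}\,\CTG\,p_0^{(\la)}=\C p_0^{(\la)}$, i.e.\ $p_0^{(\la)}$ is a minimal projection. To see that $p_0^{(\la)}$ is independent of $\la\in P_{++}$, take $\la,\la'\in P_{++}$; then $\la+\la'\in P_{++}$ and $|a_{\la+\la'}|^n=|a_\la|^n|a_{\la'}|^n$, so letting $n\to\infty$ (all three limits exist by the previous step) gives $p_0^{(\la+\la')}=p_0^{(\la)}p_0^{(\la')}$. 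The projections $p_0^{(\la)},p_0^{(\la')}$ lie in the commutative C$^*$-algebra generated by the $|a_\mu|$, hence commute, so $p_0^{(\la)}p_0^{(\la')}$ is a nonzero projection dominated by the minimal projections $p_0^{(\la)}$ and $p_0^{(\la')}$, and therefore equals each of them. Thus $p_0:=p_0^{(\la)}$ is the common minimal projection, which proves the lemma.

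The only genuinely hard point is the second paragraph: identifying $\pi_{w_0}(|a_\la|)$ as a diagonal operator whose eigenvalue $1$ occurs with multiplicity one. Concretely this requires determining which terms of the iterated coproduct $\de^{(N-1)}(C_{\la,w_0\la}^\la)$ survive on the vacuum and reading off the resulting powers of $q$ -- the representation-theoretic combinatorics underlying Soibel'man's theorem. The remaining ingredients (isometry of $\pi_{w_0}$, normality of $a_\la$, the relation $|a_\la||a_\mu|=|a_{\la+\mu}|$, and the final intersection argument) are routine.
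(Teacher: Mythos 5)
Your proposal is correct and follows essentially the same route as the paper: pass to the faithful representation $\pi_{w_0}$, use the explicit tensor-product formula for $\pi_{w_0}(a_\la)$ (the paper cites \cite[p.120, (6.2.4)]{KS} for exactly the diagonal form you assert), observe that regularity of $\la$ makes all exponents strictly positive so that the eigenvalue $1$ occurs only at the vacuum vector, and pull the rank-one limit projection back through the isometric embedding. Your extra preliminaries ($|a_\la|^n=|a_{n\la}|$ and the multiplicative argument for independence of $\la$) are correct but redundant, since the common limit $P_0$ already does not depend on $\la$.
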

\begin{proof}
With a minimal expression $w_0=s_{i_1}\cdots s_{i_k}$,
we have the following (up to a factor of modulus one):
\begin{equation*}
\label{eq:alam}
\pi_{w_0}(a_\la)
=
\pi_{i_1}(u_{i_1})^{s_{i_2}\cdots s_{i_k}\la(h_{i_1})}
\oti
\cdots
\oti
\pi_{i_{k-1}}(u_{i_{k-1}})^{s_{i_k}\la(h_{i_{k-1}})}
\oti
\pi_{i_k}(u_{i_k})^{\la(h_{i_k})}.
\end{equation*}
(See \cite[p.120, (6.2.4)]{KS}.)
Since $\la$ is regular,
$s_{i_{\ell+1}}\cdots s_{i_k}\la(h_{i_\ell})$ is strictly positive
for each $\ell=1,\dots,k$.
Thus $\pi_{w_0}(|a_\la|)$
is a non-singular compact operator
with spectrum $\{0\}\cup \{q^m\}_{m\in S}\cup\{1\}$
for some $S\subs \N$.
Note that the eigenspace
with respect to the eigenvalue $1$
is the one-dimensional space $\C\vep_0^{\oti \ell(w_0)}$.
Hence we obtain
\[
\|\pi_{w_0}(|a_\la|^m)-\pi_{w_0}(|a_\la|^n)\|
\leq
q^m
\quad
\mbox{if }m\leq n.
\]
It follows from Lemma \ref{lem:faithful}
that $\||a_\la|^m-|a_\la|^n\|\leq q^m$.
In particular, 
$|a_\la|^n$ uniformly converges to an element $p_\la$
in $\CTG$.
Then $\pi_{w_0}(p_\la)$ is nothing but
the one rank projection onto $\C \vep_0^{\oti\ell(w_0)}$.
Since $\pi_{w_0}$ is faithful,
$p_\la$ does not depend on $\la$,
and we denote it by $p_0$.

Moreover, the following
equality shows the minimality of $p_0$ in $\CTG$.
\[
\pi_{w_0}(p_0 \CTG p_0)
=
\pi_{w_0}(p_0)\pi_{w_0}(\CTG)\pi_{w_0}(p_0)
=
\C \pi_{w_0}(p_0).
\]
\end{proof}

Now we consider a commutative C$^*$-algebra
$A:=\pi_{w_0}(C^*(|a_\la|\mid \la\in P_{++}))$
that is contained in $K(H_{w_0})$.
Since $A$ is non-degenerately acting on $H_{w_0}$,
there exists a sequence of projections
$q_\ell\in A$ for $\ell$ in some index set $J$
such that $q_0=\pi_{w_0}(p_0)$,
$\sum_{\ell\in J}q_\ell=1$ (strongly),
$\dim q_\ell H_{w_0}<\infty$
and
$\pi_{w_0}(|a_\la|)q_\ell
=\nu_{\la,\ell}q_\ell$ for some $\nu_{\la,\ell}>0$.
Using the faithfulness of $\pi_{w_0}$ on $\CTG$,
we take a family of orthogonal projections
$\{r_\ell\}_{\ell\in J}$
from $\CTG$ such that $\pi_{w_0}(r_\ell)=q_\ell$.
Also we must have
$|a_\la|r_\ell=\nu_{\la,\ell}r_\ell$.

Since $\pi_{w_0}$ is faithful,
$r_\ell \CTG r_\ell$
embeds into $q_\ell B(H_{w_0})q_\ell$,
which shows the finite dimensionality
of $r_\ell \CTG r_\ell$ for all $\ell\in J$.

\begin{lem}
The restriction of $\pi_{w_0}$ on $\CTG$
is irreducible.
\end{lem}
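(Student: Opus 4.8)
The plan is to show that $\pi_{w_0}$ restricted to $\CTG$ is irreducible by proving that its image is weakly dense in all of $B(H_{w_0})$, then using that $\CTG$ carries the factor $\lTG$ in its weak closure. First I would note that we already have a wealth of minimal projections at hand: the $r_\ell$ with $\pi_{w_0}(r_\ell) = q_\ell$ and $\sum_{\ell\in J} q_\ell = 1$ strongly, together with the fact that each $r_\ell\,\CTG\,r_\ell$ is finite dimensional and embeds into $q_\ell B(H_{w_0}) q_\ell$. The key structural input is Theorem~\ref{thm:factor}(2): $\lTG$ is the type I$_\infty$ factor. So if I can identify the weak closure of $\pi_{w_0}(\CTG)$ with $\pi_{w_0}(\lTG)''$ acting on $H_{w_0}$, factoriality will force this von Neumann algebra to be $B(H_{w_0})$ as soon as I know the representation has no nontrivial invariant subspace coming from the center --- but the center is trivial, so the representation of the type~I$_\infty$ factor $\lTG$ on $H_{w_0}$ is a multiple of the unique irreducible representation, and the claim reduces to showing this multiplicity is one.

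The heart of the argument, then, is a multiplicity-one statement, and the natural way to get it is through the minimal projection $p_0$. We know $\pi_{w_0}(p_0)$ is the rank-one projection onto $\C\,\vep_0^{\otimes\ell(w_0)}$, and $p_0\,\CTG\,p_0 = \C p_0$. In a von Neumann algebra sense, if $N := \pi_{w_0}(\CTG)'' \subseteq B(H_{w_0})$, then $\pi_{w_0}(p_0)$ is a minimal projection of $N$ with $\pi_{w_0}(p_0) N \pi_{w_0}(p_0) = \C\,\pi_{w_0}(p_0)$ and $\pi_{w_0}(p_0)$ already minimal in $B(H_{w_0})$; I want to conclude $\pi_{w_0}(p_0)$ is an abelian projection in $N$ whose central support is $1$. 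Central support $1$ follows because the central support of $p_0$ in $\lTG$ is $1$ (the factor is type~I, so every nonzero projection has full central support), combined with faithfulness of $\pi_{w_0}$ on $\CTG$ hence normality/faithfulness of its extension to $\lTG$ --- here one should check that $\pi_{w_0}$ extends to a \emph{normal} faithful representation of $\lTG$, which is where a little care is needed. Once $\pi_{w_0}(p_0)$ is an abelian projection with central support $1$ in the type~I factor $N' \cong$ (commutant side), a standard fact about type~I factors gives that $N$ acts irreducibly: a type~I factor with a minimal projection that is also minimal in $B(H)$ must be all of $B(H)$.

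Concretely, the cleanest path avoids von Neumann algebra subtleties and argues directly at the C$^*$-level: take any nonzero $\pi_{w_0}(\CTG)$-invariant closed subspace $\cK \subseteq H_{w_0}$; since $\pi_{w_0}$ is nondegenerate and the $q_\ell$ sum to $1$, some $q_\ell \cK \neq 0$; push down to $q_0 = \pi_{w_0}(p_0)$ using elements of $\pi_{w_0}(\CTG)$ that connect $q_\ell$ to $q_0$ --- such elements exist because in $\lTG$ (a factor) any two nonzero projections are comparable, and one can approximate the relevant partial isometries by elements of the C$^*$-algebra $\CTG$ using faithfulness of $\pi_{w_0}$; conclude $\vep_0^{\otimes\ell(w_0)} \in \cK$; then, since $\pi_{w_0}(\CTG)\,\vep_0^{\otimes\ell(w_0)}$ is dense in $H_{w_0}$ (this needs to be verified from the explicit weighted-shift formulas for $\pi_i$, exactly as in Soibel'man's cyclicity argument), we get $\cK = H_{w_0}$. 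I expect the main obstacle to be precisely this last density/cyclicity claim --- showing that $\vep_0^{\otimes\ell(w_0)}$ is a cyclic vector for $\pi_{w_0}(\CTG)$, not merely for $\pi_{w_0}(\CG)$ --- since one must produce enough $T$-invariant matrix coefficients to move $\vep_0^{\otimes\ell(w_0)}$ around all of $H_{w_0}$; the link $\lG = Z(\lG) \vee \lTG$ from the proof of Theorem~\ref{thm:factor}(2), which lets one trade a general coefficient for a $T$-invariant one times a central unitary, should be the device that closes this gap.
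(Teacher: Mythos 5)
Your overall strategy is the right one and matches the paper's: reduce irreducibility to the statement that every nonzero invariant subspace contains $\xi_0=\vep_0^{\oti\ell(w_0)}$ and that $\xi_0$ is cyclic, using the finite--dimensional corners $r_\ell\CTG r_\ell$, the minimal projection $p_0$, and partial isometries in $\CTG$ connecting $r_\ell$ to $p_0$. But the two steps you leave open are exactly the ones that carry the content, and neither closes the way you suggest. First, the existence of the partial isometries \emph{inside the C$^*$-algebra} $\CTG$: comparability of projections in the factor $\lTG$ produces partial isometries in $\lTG$, and ``approximating'' them by elements of $\CTG$ (weak or strong$^*$ density is all you have) does not yield partial isometries in $\CTG$; faithfulness of $\pi_{w_0}$ plays no role in such an approximation. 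The paper's device, which you are missing, is to view the inclusion $\CTG\subs\lTG\cong B(\ell^2)$ as a faithful \emph{irreducible} representation $\ps$; since each $r_\ell\CTG r_\ell$ is finite dimensional, $\Ima\ps$ meets $K(\ell^2)$ nontrivially, hence $K(\ell^2)\subs\Ima\ps$, so the finite-rank partial isometries between $\ps(r_\ell)$ and $\ps(p_0)$ are compact, lie in $\ps(\CTG)$, and pull back \emph{exactly} to elements $v_{i,\ell}\in\CTG$ with $v_{i,\ell}=r_\ell v_{i,\ell}p_0$ and $r_\ell=\sum_i v_{i,\ell}p_0v_{i,\ell}^*$.

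Second, you defer the cyclicity of $\xi_0$ to an unexecuted Soibel'man-style weighted-shift computation and call it the main obstacle, but once the $v_{i,\ell}$ above are in hand no such computation is needed: $q_\ell H_{w_0}=\sum_i\pi_{w_0}(v_{i,\ell})q_0H_{w_0}=\sum_i\C\,\pi_{w_0}(v_{i,\ell})\xi_0\subs\pi_{w_0}(\CTG)\xi_0$, and $\sum_\ell q_\ell=1$ strongly, so $\xi_0$ is cyclic. (Equivalently, a vector orthogonal to $\pi_{w_0}(\CTG)\xi_0$ is killed by every $\pi_{w_0}(p_0x)$, hence by every $q_\ell$, hence vanishes.) The patch you propose for cyclicity --- trading coefficients via $\lG=Z(\lG)\vee\lTG$ --- does not work at this stage, because the central unitaries $v_\la$ are polar parts taken in the von Neumann algebra and are not in $\CG$; using them requires a normal extension of $\pi_{w_0}$ to $\lG$, which is the same quasi-equivalence issue you already flag as unresolved in your first, von Neumann algebraic route. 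That first route is indeed circular as you suspect: asserting that $\pi_{w_0}|_{\CTG}$ extends normally and faithfully to $\lTG$ is essentially equivalent to the quasi-equivalence of $\pi_{w_0}|_{\CTG}$ with the defining representation, which is what the irreducibility claim amounts to up to multiplicity. So the proposal as written has a genuine gap; the compact-operator argument above is the idea needed to close it.
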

\begin{proof}
Since we know the minimality of $p_0$,
it suffices to show that
$\xi_0:=e_0^{\oti\ell(w_0)}$
is cyclic for $\pi_{w_0}(\CTG)$.
Take a vector $\xi\in H_{w_0}$
that is orthogonal to $\pi_{w_0}(\CTG)\xi_0$.
Thus we have
$\langle \xi,\pi_{w_0}(x)\xi_0\rangle=0$
for any $x\in \CTG$.
This implies that
$\pi_{w_0}(p_0 x)\xi=0$ for any $x\in \CTG$.

Now recall that $\lTG$ is a type I factor.
So, we have a faithful irreducible representation
$\ps\col \CTG\ra B(\ell^2)$.
Since each $r_\ell \CTG r_\ell$ is finite dimensional,
we know that $\Ima \ps$ has a non-zero intersection
with $K(\ell^2)$.
Hence $K(\ell^2)\subs \Ima \ps$.
This enables us to
take partial isometries $v_{i,\ell}$,
$i\in I_\ell$ in $\CTG$ such that
$v_{i,\ell}=r_\ell v_{i,\ell}p_0$
and
$r_\ell=\sum_{i\in I_\ell}v_{i,\ell}p_0v_{i,\ell}^*$
(finite sum).
Thus we have
$q_\ell\xi
=\sum_i \pi_{w_0}(v_{i,\ell}p_0v_{i,\ell}^*)\xi
=0$.
This shows $\xi=0$
because $\sum_{\ell\in J} q_\ell=1$.
\end{proof}

We will state Dijkhuizen--Stokman's result
for $T\subset G_q$ \cite[Theorem 5.9]{DS}.

\begin{thm}[Dijkhuizen--Stokman]
\label{thm:class-irr}
The following statements hold:
\begin{enumerate}
\item
For any $w\in W$,
the restriction of $\pi_w$ on $C(T\backslash G_q)$
gives an irreducible representation;

\item
Any irreducible representation
of $C(T\backslash G_q)$ is unitarily equivalent
to the restriction of $\pi_w$ with unique $w\in W$.
\end{enumerate}
\end{thm}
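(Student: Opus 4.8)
The plan is to establish Theorem~\ref{thm:class-irr} by combining the three lemmas just proved with Soibel'man's classification (Theorem~\ref{thm:irr-repn-G}) and the faithfulness of $\pi_{w_0}$ on $\CTG$ (Lemma~\ref{lem:faithful}). Part~(1) is nearly done: for a general $w\in W$, pick $w'$ with $\ell(w_0)=\ell(w)+\ell(w')$, so that $\pi_{w_0}=(\pi_w\oti\pi_{w'})\circ\de$ on all of $\CG$. Since the restriction of $\pi_{w_0}$ to $\CTG$ is irreducible (the last unnumbered lemma), and since applying the character $\eta_{w'}$ in the second leg recovers $\pi_w$, I want to deduce irreducibility of $\pi_w$ on $\CTG$ from that of $\pi_{w_0}$. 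The cleanest route is to observe that $\pi_{w_0}(x)=0$ forces $\pi_w(x)=0$ for $x\in\CTG$ (already in the proof of Lemma~\ref{lem:faithful}), and conversely, because $\pi_{w_0}$ factors through $\pi_w\oti\pi_{w'}$, any operator in the commutant of $\pi_w(\CTG)$ would produce one in the commutant of $\pi_{w_0}(\CTG)$; so irreducibility descends. An alternative, perhaps smoother, is to run the argument of the preceding lemma with $w_0$ replaced by an arbitrary $w$: $\pi_w(|a_\la|)$ is again a product of powers of the $\pi_{i_j}(u_{i_j})$, hence a non-singular compact operator whose top eigenspace is spanned by $e_0^{\oti\ell(w)}$, so $|a_\la|^n$ converges in $\CG$ to a projection whose image under $\pi_w$ is the rank-one projection onto $\C e_0^{\oti\ell(w)}$; then cyclicity of that vector for $\pi_w(\CTG)$, proved exactly as in the last lemma using the type~I factoriality of $\lTG$, gives irreducibility of $\pi_w$ on $\CTG$.

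For part~(2), let $\sigma$ be an irreducible representation of $\CTG$. Since $G_q$ is co-amenable, $\CTG$ is a quotient of $\CG$; more precisely, $\CTG\subs\CG$, and I want to extend (a representation equivalent to) $\sigma$ to $\CG$, or rather to exhibit $\sigma$ as the restriction of some $\pi_{t,w}$. The natural strategy: first show every irreducible representation of $\CTG$ is spatially equivalent to a subrepresentation of $\pi_{w_0}$ restricted to $\CTG$. This follows from the factoriality of $\lTG$ as a type~I$_\infty$ factor: $\lTG$ has (up to equivalence) a unique irreducible representation, which on $\CTG$ contains $K(\ell^2)$ in its image, and $\pi_{w_0}|_{\CTG}$ is faithful and irreducible, hence is that unique representation. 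So $\sigma\cong\pi_{w_0}|_{\CTG}$ for any irreducible $\sigma$? That cannot be right, because for $w\neq w_0$ the representations $\pi_w|_{\CTG}$ ought to be genuinely different. The resolution is that $\pi_w|_{\CTG}$ for $w\neq w_0$ is \emph{not} faithful: one shows $\ker(\pi_w|_{\CTG})\supsetneq 0$, and in fact these kernels, as $w$ ranges over $W$, are exactly the distinct primitive ideals of $\CTG$. Thus the correct plan for part~(2) is: (i) show $\CTG$ has at most $|W|$ irreducible representations by exhibiting a composition series of $\CG$ indexed by $W$ (Soibel'man's ideals $I_w$) and intersecting with $\CTG$; (ii) show the $\pi_w|_{\CTG}$, $w\in W$, are pairwise inequivalent by distinguishing them on the generators $a_\la$ via the eigenvalue pattern of $\pi_w(|a_\la|)$ — specifically $\pi_w(a_\la)$ involves $\la(h_{i_j})$ through the reduced word of $w$, and the minimal nonzero spectral value or the whole set of characters of the commutative algebra generated by $\{|a_\la|\}$ depends on $w$; (iii) conclude by a counting argument that every irreducible representation of $\CTG$ is equivalent to exactly one $\pi_w|_{\CTG}$.

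The cleanest way to make step~(i) rigorous is to use Theorem~\ref{thm:irr-repn-G}: every irreducible representation $\pi$ of $\CG$ has a unique $w$ with $I_w\subs\ker\pi$, and $\pi\cong\pi_{t,w}$. Given an irreducible $\sigma$ of $\CTG$, extend it to an irreducible representation $\tilde\sigma$ of $\CG$ (possible since $\CTG\subs\CG$ is a subalgebra of a type~I C$^*$-algebra — or more directly, since $\sigma(\CTG)\supseteq K(\ell^2)$, one builds the extension via the double commutant / using that $\CG$ acts on the same Hilbert space through a representation restricting to $\sigma$ after passing to a cyclic subspace; this requires care). Then $\tilde\sigma\cong\pi_{t,w}$, and since $\pi_{t,w}=\pi_w\circ\ga_t$ and $\ga_t$ acts trivially on $\CTG$ (as $\CTG$ is the fixed-point algebra of the \emph{left} $T$-action, while $\ga_t$ here is that very left action — check the conventions), we get $\sigma=\tilde\sigma|_{\CTG}=\pi_{t,w}|_{\CTG}=\pi_w|_{\CTG}$. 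Uniqueness of $w$ for $\sigma$ then follows from uniqueness of $w$ for $\tilde\sigma$ together with step~(ii).

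The main obstacle I anticipate is step~(ii)--(iii): proving the $\pi_w|_{\CTG}$ are pairwise \emph{inequivalent} and that there are no others. Inequivalence should come from computing, for each $w$, the spectrum of $\pi_w(|a_\la|)$ as an explicit product $\prod_j \pi_{i_j}(u_{i_j})^{s_{i_{j+1}}\cdots s_{i_k}\la(h_{i_j})}$ and noting that for suitable $\la$ the resulting set of eigenvalues (equivalently, the Gelfand spectrum of the commutative algebra $C^*(|a_\la|:\la\in P_+)$ in the representation) records the reduced word, hence $w$ itself; this is essentially the content of \cite[Chapter 3]{KS} computations and should be citable. The harder half is ruling out extra irreducibles: here one really does need that $\CTG$ is type~I with a known ideal structure — and the factoriality result (Theorem~\ref{thm:factor}(2)) only says the \emph{von Neumann algebra} $\lTG$ is a single type~I factor, i.e. has one normal irreducible representation; lifting this to ``$\CTG$ has exactly $|W|$ irreducible C$^*$-representations'' requires combining it with the composition series coming from Soibel'man. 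So the crux is organizing the interplay: factoriality of $\lTG$ $\Rightarrow$ the ``generic'' (faithful) representation $\pi_{w_0}|_{\CTG}$ is unique of its kind; Soibel'man's filtration $\Rightarrow$ the non-faithful ones are governed by the $I_w$; and an explicit spectral invariant $\Rightarrow$ these are all distinct and exhaust everything. I would present part~(1) in full (it is short) and for part~(2) give the extension-to-$\CG$ argument plus the spectral-invariant sketch, citing \cite{KS} for the detailed eigenvalue computation and \cite{DS} for the original statement.
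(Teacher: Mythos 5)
Your part~(1) (first route) is exactly the paper's argument: since $\pi_{w_0}(\CTG)\subs\pi_w(\CTG)\oti\pi_{w'}(\CG)$ for $w'=w^{-1}w_0$, irreducibility of $\pi_{w_0}|_{\CTG}$ descends to $\pi_w|_{\CTG}$ (the paper phrases it with invariant subspaces $L\oti H_{w'}$ rather than commutants, but it is the same). For the existence half of~(2) the paper also does what you outline, but it sidesteps the delicate point you flag: instead of extending the \emph{representation} $\sigma$ to $\CG$, it extends a pure state of $\CTG$ to a pure state of $\CG$ (always possible for a unital C$^*$-subalgebra), realizes the extension as a vector state of some $\pi_{t,w}$ via Soibel'man, and then uses part~(1) to see that the GNS vector is already cyclic for $\pi_w(\CTG)=\pi_{t,w}(\CTG)$ — this last step is also what legitimizes your identification of $\sigma$ with $\tilde\sigma|_{\CTG}$, which as written conflates a cyclic subrepresentation with the full restriction. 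So for existence your plan is essentially the paper's, modulo replacing "extend the representation" by "extend a pure state".

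Where you genuinely diverge, and where the gap lies, is the uniqueness in~(2). You propose to separate the $\pi_w|_{\CTG}$ by a spectral invariant (the eigenvalue pattern of $\pi_w(|a_\la|)$ recording the inversion set of $w$) together with a counting/composition-series argument; this is plausible but is left entirely as a sketch, and making "the spectrum determines $w$" precise is real work. The paper's argument is much shorter and needs no computation: if $\pi_w|_{\CTG}\cong\pi_{w'}|_{\CTG}$ then $\ker\pi_w\cap\CTG=\ker\pi_{w'}\cap\CTG$; the ideal $I_w$ is generated by elements $c=C^\la_{\xi_\mu,\xi_\la}$ whose absolute values $|c|$ lie in $\CTG$, and $\pi_w(c)=0$ iff $\pi_w(|c|)=0$, so $I_w\subs\ker\pi_w$ forces $I_w\subs\ker\pi_{w'}$, and Soibel'man's uniqueness of the ideal $I_w$ attached to an irreducible representation of $\CG$ gives $w=w'$. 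The observation that the generators of $I_w$ become visible inside $\CTG$ after taking absolute values is the one idea your proposal is missing; with it, neither the spectral computation nor the counting argument is needed. Your secondary remarks (that the type~I$_\infty$ factoriality of $\lTG$ only controls \emph{normal} irreducible representations, and that $\pi_w|_{\CTG}$ is non-faithful for $w\neq w_0$) are correct and are also noted in the paper.
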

\begin{proof}
(1).
By the previous lemma,
it suffices to show the statement when $w\neq w_0$.
Let $w'=w^{-1}w_0$.
Then $\ell(w_0)=\ell(w)+\ell(w')$.
Let $L$ be a non-zero $\pi_w(\CTG)$-invariant
closed subspace.
Since $\pi_{w_0}(\CTG)
\subs \pi_w(\CTG)\oti \pi_{w'}(\CG)$,
$L\oti H_{w'}$
is invariant for $\pi_{w_0}(\CTG)$.
The irreducibility of $\pi_{w_0}(\CTG)$ shows that
$L\oti H_{w'}=H_{w_0}=H_w\oti H_{w'}$,
and $L=H_w$.

(2).
Let $\vph$ be a pure state on $\CTG$
and $\ps$ a pure state extension on $\CG$.
Then by Theorem \ref{thm:irr-repn-G} (2) due to Soibel'man,
there exists $t\in T$, $w\in W$ and
a unit vector $\xi\in H_{w}$
such that
$\ps(x)=\langle \pi_{t,w}(x)\xi,\xi\rangle$
for all $x\in\CG$.
By (1),
the restriction of $\pi_{t,w}$ on $\CTG$,
which coincides with $\pi_w$ on $\CTG$,
is irreducible.
Hence
$\xi$ is cyclic for $\pi_w(\CTG)$,
that is, $\vph$ comes from $\pi_w$.

Suppose that the restrictions of
$\pi_w$ and $\pi_{w'}$ on $\CTG$ are unitarily equivalent.
Then $\ker \pi_w \cap\CTG=\ker\pi_{w'}\cap\CTG$.
Thanks to Theorem \ref{thm:irr-repn-G} (1),
it suffices to show that $I_w\subs \ker \pi_{w'}$.
Take any $C_{\xi_\mu,\xi_\la}^\la$ in $I_w$
with $\xi_\mu\in L(\la)_\mu$ and $\mu\in\Wt(\la)$.
Then $\pi_w(|C_{\xi_\mu,\xi_\la}^\la|)=0$.
Since $|C_{\xi_\mu,\xi_\la}^\la|\in\CTG$,
we have $\pi_{w'}(|C_{\xi_\mu,\xi_\la}^\la|)=0$,
that is, $C_{\xi_\mu,\xi_\la}^\la\in \ker\pi_{w'}$.
Hence $I_w\subs \ker \pi_{w'}$,
and we are done.
\end{proof}

\begin{rem}
In the above proof,
we in fact have shown that
if $\pi_w$ and $\pi_{w'}$ are unitarily equivalent
as representations of $C(T\backslash G_q/T)$,
then $w=w'$.
\end{rem}

\begin{rem}
We can show $\pi_w$ with $w\neq w_0$
has the non-trivial kernel in $\CTG$
as follows.
Recall that for any $\la\in P_+$,
the lowest weight of $L(\la)$ is given by $w_0\la$.
In particular,
if $\mu\in \Wt(\la)$,
then $\mu\geq w_0\la$.
Take $\la\in P_+$
such that $w\la\neq w_0\la$.
Then $\pi_{w}(|C_{w_0\la,\la}^\la|)=0$,
which shows the non-triviality of
$I_w\cap \CTG$.
In particular,
the restriction of $\pi_{w_0}$ is conjugate
to the embedding $\iota\col \CTG\ra \lTG\cong B(\ell^2)$.
\end{rem}

The previous result immediately implies the following.

\begin{cor}
\label{cor:counit-unique}
The counit is the unique
character of $C(T\backslash G_q)$.
\end{cor}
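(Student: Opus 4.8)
The plan is to deduce this immediately from the classification in Theorem \ref{thm:class-irr}. A character of the unital C$^*$-algebra $\CTG$ is nothing but a one-dimensional, hence irreducible, $*$-representation. So by Theorem \ref{thm:class-irr}(2) it is unitarily equivalent to the restriction of $\pi_w$ to $\CTG$ for a unique $w\in W$; and since a unitary between one-dimensional Hilbert spaces is multiplication by a scalar of modulus one, this equivalence is in fact an equality. Thus every character of $\CTG$ is of the form $\pi_w|_{\CTG}$ for some $w\in W$.

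Next I would rule out all $w\neq e$ on dimension grounds. The representation space of $\pi_w$ is $H_w=H_{s_{i_1}}\oti\cdots\oti H_{s_{i_k}}$ with $k=\ell(w)$ and each factor $H_{s_i}=\ell^2(\Z_+)$ infinite dimensional, so $\dim H_w=1$ forces $k=0$, i.e.\ $w=e$ (the empty tensor product being $\C$). For $w=e$ we have $\pi_e=\ev_e\circ r_T$ by definition, and since $\ev_e$ is the counit of $C(T)$ and $\vep_T\circ r_T=\vep$ (see Section \ref{subsect:quantum subgrp}), this is exactly the counit $\vep$. Hence any character of $\CTG$ equals $\vep|_{\CTG}$.

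Finally I would check the converse, namely that $\vep|_{\CTG}$ really is a character: by co-amenability $\vep$ extends to a character of $\CG$, and restricting it to the unital C$^*$-subalgebra $\CTG$ yields a unital $*$-homomorphism into $\C$. Combining the two directions gives the claim. There is essentially no obstacle here: Theorem \ref{thm:class-irr} does all the work, and the only points to verify by hand are the trivial identification $\pi_e=\ev_e\circ r_T=\vep$ and the remark that unitarily equivalent one-dimensional representations coincide, both of which are immediate. (One could alternatively argue directly via Lemma \ref{lem:faithful} and the explicit form of $\pi_{w_0}$, but routing through Theorem \ref{thm:class-irr} is cleanest.)
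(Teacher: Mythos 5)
Your proof is correct and is essentially the argument the paper intends: the paper derives Corollary \ref{cor:counit-unique} directly from Theorem \ref{thm:class-irr} (a character is a one-dimensional irreducible representation, unitary equivalence preserves dimension, only $w=e$ gives $\dim H_w=1$, and $\pi_e=\ev_e\circ r_T=\vep$). The only cosmetic point is that the dimension count should precede the remark that the unitary equivalence is an equality, but this does not affect the argument.
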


For an action $\al$ of $G_q$
on a unital C$^*$-algebra (or von Neumann algebra)
$A$,
we denote by
$\Aut_{G_q}(A)$ the set
of $G_q$-equivariant automorphisms on $A$,
that is,
\[
\Aut_{G_q}(A)
:=
\{\th\in\Aut(A)\mid
\al\circ\th=(\th\oti\id)\circ\al
\}.
\]

\begin{thm}
The embedding of $\CTG$ into $\CG$ is unique.
\end{thm}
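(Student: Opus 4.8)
The plan is to reduce the uniqueness of the $G_q$-equivariant embedding $\CTG\hookrightarrow\CG$ to the uniqueness of the character on $\CTG$ recorded in Corollary \ref{cor:counit-unique}. Here $\CTG$ carries the right action of $G_q$ obtained by restricting $\de$, while $\CG$ carries the right regular action $\de$, and by an ``embedding'' I mean a $G_q$-equivariant unital $*$-homomorphism $\Phi\col\CTG\ra\CG$, i.e. one with $\de\circ\Phi=(\Phi\oti\id)\circ\de$. (If one only assumes $\Phi$ is a nonzero $*$-homomorphism, then $\de(\Phi(1))=\Phi(1)\oti 1$ forces $\Phi(1)$ into the fixed point algebra of $\de$ on $\CG$; since $(\vep\oti\id)\circ\de=\id$ on $\CG$, that algebra is $\C 1$, so $\Phi$ is automatically unital.)

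First I would note that $\vep\circ\Phi$ is a character of $\CTG$, hence by Corollary \ref{cor:counit-unique} it coincides with the counit of $\CTG$, that is, with the restriction of $\vep$ from $\CG$ to $\CTG$. Next, for $x\in\CTG$ I would apply $\vep\oti\id$ to the equivariance identity $\de(\Phi(x))=(\Phi\oti\id)(\de(x))$. The left-hand side equals $\Phi(x)$ by the counit property $(\vep\oti\id)\circ\de=\id$ on $\CG$, while the right-hand side equals $((\vep\circ\Phi)\oti\id)(\de(x))=(\vep\oti\id)(\de(x))=x$, where one uses that $\de(x)$ is the same element whether $x$ is regarded in $\CTG$ or in $\CG$, together with $\vep\circ\Phi=\vep|_{\CTG}$ from the previous step. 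Hence $\Phi(x)=x$ for every $x\in\CTG$, so $\Phi$ is the canonical inclusion.

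The only genuine ingredient is Corollary \ref{cor:counit-unique} — and so, ultimately, Dijkhuizen--Stokman's classification (Theorem \ref{thm:class-irr}) together with the faithfulness of $\pi_{w_0}$ on $\CTG$; everything after that is a formal computation with the counit and co-amenability of $G_q$ (which makes $\vep$ norm-continuous on $\CG$). I therefore do not expect any serious obstacle here: once the uniqueness of the character is available, the argument is a two-line manipulation of the coproduct, in the same spirit as the proof that $\Aut_{G_q}(\CTG)$ is trivial.
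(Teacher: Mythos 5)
Your argument is correct and is essentially identical to the paper's proof: both reduce to Corollary \ref{cor:counit-unique} via the observation that $\vep\circ\Phi$ is a character of $\CTG$, and then recover $\Phi(x)=x$ by applying $\vep\oti\id$ to the equivariance identity $\de(\Phi(x))=(\Phi\oti\id)(\de(x))$. The extra remarks on unitality and norm-continuity of $\vep$ are fine but not needed beyond what the paper already assumes.
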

\begin{proof}
Suppose that $\ps\col \CTG\to\CG$ is a $G_q$-equivariant
embedding.
By Corollary \ref{cor:counit-unique},
we have $\vep\circ\ps=\vep$ on $\CTG$.
Then for $x\in\CTG$,
\[
x=(\vep\oti\id)(\de(x))
=(\vep\circ\ps\oti\id)(\de(x))
=(\vep\oti\id)(\de(\ps(x)))
=\ps(x).
\]
\end{proof}

From the previous result,
we have the following.

\begin{cor}\label{cor:equiv}
One has $\Aut_{G_q}(C(T\backslash G_q))=\{\id\}$.
\end{cor}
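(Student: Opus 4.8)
The statement to prove is Corollary \ref{cor:equiv}: $\Aut_{G_q}(C(T\backslash G_q)) = \{\id\}$.

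The plan is to deduce this directly from the preceding theorem (uniqueness of the embedding $\CTG\hookrightarrow\CG$), which in turn rests on Corollary \ref{cor:counit-unique} (the counit is the unique character of $\CTG$). The key observation is that a $G_q$-equivariant automorphism $\th$ of $\CTG$ is, in particular, a $G_q$-equivariant \emph{embedding} of $\CTG$ into $\CG$ once composed with the canonical inclusion $\iota\col\CTG\hookrightarrow\CG$. Indeed, equivariance of $\th$ with respect to the right action $\de|_{\CTG}$ means $\de\circ\th=(\th\oti\id)\circ\de$, and this pushes forward to $\de\circ(\iota\circ\th)=(\iota\circ\th\oti\id)\circ\de$, so $\iota\circ\th\col\CTG\to\CG$ is $G_q$-equivariant.

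The main step is then to invoke the theorem on uniqueness of the embedding: since both $\iota$ and $\iota\circ\th$ are $G_q$-equivariant embeddings of $\CTG$ into $\CG$, that theorem forces $\iota\circ\th=\iota$. As $\iota$ is injective, we conclude $\th=\id$. Concretely, unwinding the proof of that theorem: $\vep\circ(\iota\circ\th)=\vep$ on $\CTG$ by Corollary \ref{cor:counit-unique} (both sides being characters of $\CTG$, hence equal to the counit), so for $x\in\CTG$ we get
\[
\th(x)=(\vep\oti\id)(\de(\th(x)))=(\vep\circ\th\oti\id)(\de(x))=(\vep\oti\id)(\de(x))=x,
\]
using equivariance of $\th$ in the middle equality and $\vep\circ\th=\vep$ (as $\vep\circ\th$ is again a character of $\CTG$).

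I do not anticipate any real obstacle here: this is a short formal consequence of the two results just established, and the only thing to be careful about is the bookkeeping of which action ($G_q$ acting on the right via $\de|_{\CTG}$) the equivariance refers to, and the trivial remark that an equivariant \emph{automorphism} is in particular an equivariant embedding so that the uniqueness theorem applies. One could also phrase the whole argument self-containedly by noting that $\vep\circ\th$ is a character of $\CTG$, hence equals $\vep$ by Corollary \ref{cor:counit-unique}, and then running the one-line counit-reconstruction computation above; this avoids even citing the embedding theorem explicitly.
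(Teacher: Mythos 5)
Your proposal is correct and follows the paper's own route: the paper derives the corollary by observing that a $G_q$-equivariant automorphism is in particular a $G_q$-equivariant embedding of $\CTG$ into $\CG$, which by the preceding uniqueness theorem (itself resting on Corollary \ref{cor:counit-unique} and the counit-reconstruction identity $x=(\vep\oti\id)(\de(x))$) must coincide with the canonical inclusion. Your unwinding of that argument, including the one-line computation, matches the paper's proof.
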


This also implies $\Aut_{G_q}(\lTG)=\{\id\}$
because an equivariant map on $\lTG$ preserves
each finite dimensional spectral subspace and also $\CTG$.

\subsection{Some formulae concerning the Haar state}
We will close this section computing the Haar state $h$ on $\CG$
in terms of a density operator.
The results obtained in this subsection
is not used in the subsequent sections.
Recall that $|a_\rho|=|C_{\rho,w_0\rho}^\rho|$
is a non-singular positive operator.

\begin{lem}
For $t\in\R$,
one has $\si_t^h=\Ad |a_\rho|^{2it}$ on $\lG$.
\end{lem}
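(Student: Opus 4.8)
The plan is to exploit the explicit formula \eqref{eq:modular} for the modular automorphism group together with the commutation relations of the $C^\la_{\xi_\mu,\xi_\nu}$ with the central unitaries $|a_\rho|$, and to check that $\Ad|a_\rho|^{2it}$ acts on each matrix coefficient by exactly the same scalar as $\si^h_t$. Since the $C^\la_{\xi_\mu,\xi_\nu}$ span a weakly dense $*$-subalgebra of $\lG$, agreement on these generators forces $\si^h_t=\Ad|a_\rho|^{2it}$ on all of $\lG$.

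First I would fix $\la\in P_+$, $\mu,\nu\in\Wt(\la)$, $\xi_\mu\in L(\la)_\mu$, $\xi_\nu\in L(\la)_\nu$, and apply \eqref{eq:CaLa} with $\La=\rho$, which gives
\[
C_{\xi_\mu,\xi_\nu}^\la\,|a_\rho|
=
q^{(\rho,-\mu+w_0\nu)}\,|a_\rho|\,C_{\xi_\mu,\xi_\nu}^\la .
\]
Iterating (or passing to $|a_\rho|^{2it}$ via the spectral calculus, using that $|a_\rho|$ is non-singular positive) yields
\[
\Ad|a_\rho|^{2it}\bigl(C_{\xi_\mu,\xi_\nu}^\la\bigr)
=
q^{-2it\,(\rho,-\mu+w_0\nu)}\,C_{\xi_\mu,\xi_\nu}^\la
=
q^{2(\mu-w_0\nu,\rho)it}\,C_{\xi_\mu,\xi_\nu}^\la .
\]
Next I would compare this with \eqref{eq:modular}, which says $\si^h_t(C_{\xi_\mu,\xi_\nu}^\la)=q^{2(\mu+\nu,\rho)it}C_{\xi_\mu,\xi_\nu}^\la$. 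So the two scalars agree precisely when $(\mu-w_0\nu,\rho)=(\mu+\nu,\rho)$, i.e.\ when $(w_0\nu,\rho)=-(\nu,\rho)$. This is exactly the identity $(w_0\nu,\rho)=(\nu,w_0\rho)=-(\nu,\rho)$, using $W$-invariance of the normalized standard form and $w_0\rho=-\rho$ (both recorded in the preliminaries). Hence $\Ad|a_\rho|^{2it}$ and $\si^h_t$ coincide on every $C_{\xi_\mu,\xi_\nu}^\la$, and therefore on $A(G_q)$, which is weakly dense in $\lG$; since both are normal (the first because $|a_\rho|^{2it}\in\lG$), they agree on $\lG$.

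The only genuinely delicate point is the passage from the single commutation relation \eqref{eq:CaLa} to conjugation by the unbounded-spectrum-free but merely positive operator $|a_\rho|^{2it}$: one must argue that, because $C_{\xi_\mu,\xi_\nu}^\la$ intertwines $|a_\rho|$ with $q^{(\rho,-\mu+w_0\nu)}|a_\rho|$, it also intertwines $f(|a_\rho|)$ with $f(q^{(\rho,-\mu+w_0\nu)}|a_\rho|)$ for any bounded Borel function $f$, in particular $f(\lambda)=\lambda^{2it}$; this is a routine spectral-theory fact (the relation $xT=cTx$ for $c>0$ and $T\geq0$ passes to $x\,\varphi(T)=\varphi(cT)\,x$), but it is where the $0<q<1$ positivity and non-singularity of $|a_\rho|$ are actually used. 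Everything else is the bookkeeping with the inner product on $\mathfrak h^*$ indicated above.
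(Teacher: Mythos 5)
Your proposal is correct and follows essentially the same route as the paper: both start from the commutation relation (\ref{eq:CaLa}) with $\La=\rho$, pass to the imaginary powers $|a_\rho|^{2it}$ by functional calculus, and match the resulting scalar $q^{2(\mu-w_0\nu,\rho)it}=q^{2(\mu+\nu,\rho)it}$ against (\ref{eq:modular}), concluding by density of $A(G_q)$ and normality. The only cosmetic difference is that the paper justifies the passage to imaginary powers by packaging the relation into the unitary matrix $C^\la$ and conjugating, whereas you invoke the (equally valid) fact that $xT=cTx$ with $T\geq0$, $c>0$ passes to bounded Borel functions of $T$.
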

\begin{proof}
Put $\La=\rho$ in (\ref{eq:CaLa}).
Then we have
$|a_\rho|^2 C_{\xi_\mu,\xi_\nu}^\la=q^{2(\mu+\nu,\rho)}
C_{\xi_\mu,\xi_\nu}^\la |a_\rho|^2$,
that is,
\[
(1\oti|a_\rho|^2)C^\la=(F_\la\oti1)C^\la(F_\la\oti|a_\rho|^2)
\quad
\mbox{for all }\la\in P_+.
\]
Since $C^\la\in B(L(\la))\oti \CG$ is a unitary,
we have
\[
(\id\oti |a_\rho|^{2it})C^\la
=
(F_\la^{it}\oti1)C^\la(F_\la^{it}\oti|a_\rho|^{2it})
\quad
\mbox{for all }\la\in P_+,\ t\in\R.
\]
Thus $\si_t^h=\Ad |a_\rho|^{2it}$.
\end{proof}

Let $\vph$ be the restriction of $h$ on $\lTG$.
Let $E_\ga\col\lG\to\lTG$ be the conditional expectation
defined by $E_\ga(x)=\int_T\ga_t(x)\,dt$,
where $dt$ denotes the normalized Haar measure on $T$.
Then $h\circ E_\ga=h$,
and
we have $\si_t^\vph=\si_t^h|_{\lTG}=\Ad|a_\rho|^{2it}$
for $t\in\R$.
Since $\lTG$ is a type I factor,
we obtain $\vph=c\Tr|a_\rho|^2$
for some $c>0$,
where $\Tr$ denotes the canonical trace of $\lTG$.

Recall the distinguished minimal projection $p_0$ in $\lTG$.
Using $|a_\rho|p_0=p_0$, we have $c=\vph(p_0)$.
Hence we get the following result which generalizes \cite[Lemma 2]{PV}.

\begin{thm}
\label{thm:Haar-state-density}
One has $h(x)=h(p_0)\Tr( |a_\rho|^2 E_\ga(x))$
for all $x\in\lG$.
In other words,
with the identification $\lG=L^\infty(T)\oti\lTG$,
one has
\[
h=
h(p_0)
\int_T\,dt\oti 
\Tr|a_\rho|^2.
\]
\end{thm}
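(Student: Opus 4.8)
The plan is to assemble the ingredients prepared just above the statement. First I record that $h$ is $\ga$-invariant, so $h\circ E_\ga=h$ on $\lG$ because $E_\ga(x)=\int_T\ga_t(x)\,dt$. Since $\ga$ commutes with the modular flow $\si^h$ (both act by scalars on each matrix coefficient $C_{\mu,\nu}^\la$), the subalgebra $\lTG=(\lG)^\ga$ is globally $\si^h$-invariant and $E_\ga$ is $\si^h$-preserving; by Takesaki's theorem the modular automorphism group of $\vph:=h|_{\lTG}$ is then $\si_t^\vph=\si_t^h|_{\lTG}$, which by the preceding lemma equals $\Ad|a_\rho|^{2it}$. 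Here one uses that $|a_\rho|=|C_{\rho,w_0\rho}^\rho|$ lies in $\CTG\subs\lTG$ (the operator $|a_\la|$ is $\ga$-invariant even though $a_\la$ itself is not).

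Next I invoke Theorem \ref{thm:factor}: $\lTG$ is the type I$_\infty$ factor, and we normalize its canonical trace $\Tr$ so that a minimal projection has trace $1$. A normal state on a type I factor whose modular group is $\Ad|a_\rho|^{2it}$ must be of the form $c\,\Tr(|a_\rho|^2\,\cdot\,)$ for some $c>0$: its density operator $D$ satisfies $\Ad D^{it}=\Ad|a_\rho|^{2it}$, so $D$ and $|a_\rho|^2$ commute and $D|a_\rho|^{-2}$ is central, hence scalar. (Necessarily $\Tr(|a_\rho|^2)<\infty$, $\vph$ being a state.) Thus $\vph=c\,\Tr(|a_\rho|^2\,\cdot\,)$. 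To identify $c$ I test against the distinguished minimal projection $p_0\in\CTG$: the earlier lemma gives $|a_\rho|^n\to p_0$ uniformly (note $\rho\in P_{++}$), hence $|a_\rho|p_0=p_0$ and so $|a_\rho|^2p_0=p_0$; since $\Tr(p_0)=1$ this yields $c=c\,\Tr(|a_\rho|^2p_0)=\vph(p_0)=h(p_0)$.

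Combining everything, for all $x\in\lG$,
\[
h(x)=h(E_\ga(x))=\vph(E_\ga(x))=h(p_0)\,\Tr\!\big(|a_\rho|^2\,E_\ga(x)\big),
\]
which is the first assertion. For the tensor-product form, Theorem \ref{thm:factor}(1) together with the covariance $\ga_t(v_\La)=\langle t,\La\rangle v_\La$ identifies $Z(\lG)$ with $L^\infty(T)$ (the unitaries $v_\La$ going to the characters $t\mapsto\langle t,\La\rangle$), and under the resulting splitting $\lG=Z(\lG)\vee\lTG\cong L^\infty(T)\oti\lTG$ the expectation $E_\ga$ becomes $\int_T dt\oti\id$; substituting into the displayed identity gives $h=h(p_0)\int_T dt\oti\Tr|a_\rho|^2$. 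Everything above is routine granted Theorem \ref{thm:factor}; the only place calling for a little care is the uniqueness of the density operator on the type I factor, which is standard, so I anticipate no real obstacle.
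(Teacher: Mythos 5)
Your argument is correct and follows essentially the same route as the paper: identify $\si^h_t=\Ad|a_\rho|^{2it}$, restrict via the $h$-preserving expectation $E_\ga$ to get $\si^\vph_t=\Ad|a_\rho|^{2it}$ on the type I factor $\lTG$, deduce $\vph=c\,\Tr(|a_\rho|^2\,\cdot\,)$, and pin down $c=h(p_0)$ from $|a_\rho|p_0=p_0$. The extra details you supply (why $|a_\rho|\in\lTG$, the uniqueness of the density operator, the identification $Z(\lG)\cong L^\infty(T)$) are exactly the steps the paper leaves implicit.
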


We will compute $h(p_0)$.
Let $w_0=w_{i_1}\cdots w_{i_k}$ be a minimal expression.
We have known that $\pi_{w_0}\col \CTG\to B(H_{w_0})$
is conjugate to the embedding
$\iota\col \CTG\to \lTG$.
Let us identify $\lTG$ with $B(H_{w_0})$ in what follows.
Let $\la\in P_+$.
By (\ref{eq:alam}),
we have the following:
\[
h(|a_\la|^2)
=
h(p_0)
\cdot
\Tr\left(\pi_{i_1}(u_{i_1})^{2s_{i_2}\cdots s_{i_k}(\la+\rho)(h_{i_1})}\right)
\cdots
\Tr\left(\pi_{i_k}(u_{i_k})^{2(\la+\rho)(h_{i_k})}\right),
\]
where each $\Tr$ denotes the canonical trace of $B(\ell^2)$.
Using (\ref{eq:pi-x-u}) and $q_j=q^{(\al_j,\al_j)/2}$ for $j\in I$,
we obtain the following for $\ell=1,\dots,k$:
\[
\Tr\left(\pi_{i_\ell}
(u_{i_\ell})^{2s_{i_{\ell+1}}\cdots s_{i_k}(\la+\rho)
(h_{i_\ell})}\right)
=
\left(1-q^{2\left(\la+\rho,\,s_{i_k}\cdots s_{i_{\ell+1}}\al_{i_\ell}\right)}
\right)^{-1}.
\]
Since $\De_+=\{s_{i_k}\cdots s_{i_{\ell+1}}\al_{i_\ell}\mid \ell=1,\dots,k\}$,
we get
\begin{equation}
\label{eq:Haar-a-lambda}
h(|a_\la|^2)
=
h(p_0)
\prod_{\al\in\De_+}\left(1-q^{2(\la+\rho,\al)}\right)^{-1}.
\end{equation}
Putting $\la=0$ in the above,
we obtain $a_\la=1$ and the following result.

\begin{lem}
\label{lem:Haar-atom}
One has the following equality:
\[
h(p_0)=
\prod_{\al\in\De_+}\left(1-q^{2(\al,\rho)}\right).
\]
\end{lem}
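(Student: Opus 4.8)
The plan is to obtain the lemma as the $\la=0$ specialisation of the identity (\ref{eq:Haar-a-lambda}). First I would examine $a_0=C^0_{0,w_00}$. Since $L(0)$ is the trivial one-dimensional $\Uqg$-module, its only weight is $0$, so $L(0)_0=L(0)=\C$ and $a_0=C^0_{0,0}$; evaluating, $C^0_{0,0}(x)=\langle\pi_0(x)\xi_0,\xi_0\rangle=\vep(x)$ for a unit vector $\xi_0$. Thus $a_0$ is the counit $\vep\in\Uqg^*$, which by the counit axiom is exactly the unit element $1$ of $A(G_q)$, hence of $\CG$. Consequently $|a_0|^2=1$ and, $h$ being a state, $h(|a_0|^2)=1$.

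Next I would substitute $\la=0$ into (\ref{eq:Haar-a-lambda}). Since $(0+\rho,\al)=(\rho,\al)$ this gives
\[
1=h(|a_0|^2)=h(p_0)\prod_{\al\in\De_+}\bigl(1-q^{2(\rho,\al)}\bigr)^{-1}.
\]
Rearranging and using the symmetry $(\rho,\al)=(\al,\rho)$ of the normalised standard form yields
\[
h(p_0)=\prod_{\al\in\De_+}\bigl(1-q^{2(\al,\rho)}\bigr),
\]
which is the asserted formula. No well-definedness issue arises: the product over $\De_+$ is finite because $\mathfrak g$ is finite-dimensional, and each factor lies in $(0,1)$ since $0<q<1$ and $(\rho,\al)>0$ for every positive root (as $\rho=\sum_i\om_i$ is regular dominant).

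I expect essentially no obstacle here beyond bookkeeping: the only two points requiring care are the identification $a_0=1$ (i.e.\ that the matrix coefficient attached to the trivial representation is the unit of $\CG$), and the remark that (\ref{eq:Haar-a-lambda}) was established for an arbitrary $\la\in P_+$—the exponents entering its proof involve $\la+\rho$, which is regular for all such $\la$—so the identity applies verbatim at $\la=0$.
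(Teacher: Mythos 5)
Your proof is correct and is exactly the paper's argument: the paper obtains the lemma by the single remark ``Putting $\la=0$ in (\ref{eq:Haar-a-lambda}), we obtain $a_\la=1$'' and the result follows. Your additional checks (that $a_0=C^0_{0,0}$ is the counit, hence the unit of $A(G_q)$, and that the derivation of (\ref{eq:Haar-a-lambda}) only needs $\la+\rho$ regular, which holds at $\la=0$) are sound fillings-in of details the paper leaves implicit.
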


For instance, if $G_q=SU_q(n)$,
we obtain
$h(p_0)=(q^2;q^2)_1\cdots(q^2;q^2)_{n-1}$.

Now we can derive the well-known
quantum Weyl dimension formula.
From (\ref{eq:Haar-ortho}) and Lemma \ref{lem:Woronowicz},
we obtain
\begin{align}
h(|a_\la|^2)
&=
h(C_{\la,w_0\la}^{\la}(C_{\la,w_0\la}^{\la})^*)
=
(\dim_q L(\la))^{-1} (F_{\la})_{w_0\la,w_0\la}
\notag\\
&=
(\dim_q L(\la))^{-1} q^{-2(\la,\rho)}.
\label{eq:Haar-a2n}
\end{align}
Employing (\ref{eq:Haar-a-lambda})
and Lemma \ref{lem:Haar-atom},
we get the following.

\begin{prop}[Quantum Weyl dimension formula]
\label{prop:quantum-dim}
For $\la\in P_+$, one has
\begin{align*}
\dim_q L(\la)&
=q^{-2(\la,\rho)}
\prod_{\al\in\De_+}\frac{\left(1-q^{2(\la+\rho,\al)}\right)}
{\left(1-q^{2(\rho,\al)}\right)}
\\
&=
\prod_{\al\in\De_+}
\frac{[(\la+\rho,\al)]_q}{[(\rho,\al)]_q},
\end{align*}
where $[n]_q$ denotes the $q$-integer
$(q^{-n}-q^n)/(q^{-1}-q)$ for $n\in\Z$.
\end{prop}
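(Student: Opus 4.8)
The plan is to extract $\dim_q L(\la)$ by comparing two independent evaluations of the single quantity $h(|a_\la|^2)$, both of which are essentially already at our disposal, and then to cast the resulting product into $q$-integer form.

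First I would invoke the representation-theoretic evaluation, namely (\ref{eq:Haar-a2n}): combining the Schur-type orthogonality relation (\ref{eq:Haar-ortho}) with the description of the Woronowicz characters in Lemma \ref{lem:Woronowicz} gives
\[
h(|a_\la|^2)
=
(\dim_q L(\la))^{-1}q^{-2(\la,\rho)}.
\]
Against this I would set the evaluation coming from Soibel'man's theory: realizing $|a_\la|$ through the faithful representation $\pi_{w_0}$ as a tensor product of powers of the diagonal operators $\pi_i(u_i)$ acting as in (\ref{eq:pi-x-u}), and summing the resulting geometric series, yields (\ref{eq:Haar-a-lambda}),
\[
h(|a_\la|^2)
=
h(p_0)\prod_{\al\in\De_+}\bigl(1-q^{2(\la+\rho,\al)}\bigr)^{-1},
\]
while the atom $h(p_0)$ itself is computed in Lemma \ref{lem:Haar-atom} to equal $\prod_{\al\in\De_+}\bigl(1-q^{2(\rho,\al)}\bigr)$. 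Equating the two expressions for $h(|a_\la|^2)$ and solving for $\dim_q L(\la)$ produces the first displayed formula of the proposition directly.

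For the second equality I would only need the elementary identity $1-q^{2x}=q^{x}(q^{-x}-q^{x})=q^{x}(q^{-1}-q)[x]_q$. Applying it to the numerator and denominator of each factor turns $\dfrac{1-q^{2(\la+\rho,\al)}}{1-q^{2(\rho,\al)}}$ into $q^{(\la,\al)}\dfrac{[(\la+\rho,\al)]_q}{[(\rho,\al)]_q}$, the factor $q^{-1}-q$ cancelling. Taking the product over $\al\in\De_+$ accumulates the scalar $q^{\sum_{\al\in\De_+}(\la,\al)}=q^{(\la,2\rho)}=q^{2(\la,\rho)}$, which cancels exactly the prefactor $q^{-2(\la,\rho)}$, leaving $\prod_{\al\in\De_+}[(\la+\rho,\al)]_q/[(\rho,\al)]_q$.

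I do not anticipate any genuine obstacle: the analysis has been front-loaded into (\ref{eq:Haar-a-lambda}), Lemma \ref{lem:Haar-atom} and (\ref{eq:Haar-a2n}), so the proposition is a short bookkeeping step. The only points meriting a moment's care are that $a_\la$ is defined only up to a unimodular scalar --- harmless here, since everything is phrased through $|a_\la|^2$ --- and that the specialization $\la=0$ is internally consistent, forcing $a_0=1$, which is precisely how Lemma \ref{lem:Haar-atom} was derived.
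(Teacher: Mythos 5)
Your proposal is correct and follows the paper's own route exactly: equating the orthogonality/Woronowicz-character evaluation (\ref{eq:Haar-a2n}) of $h(|a_\la|^2)$ with the Soibel'man-representation evaluation (\ref{eq:Haar-a-lambda}) together with Lemma \ref{lem:Haar-atom}, then converting to $q$-integers via $1-q^{2x}=q^{x}(q^{-1}-q)[x]_q$ and the identity $\sum_{\al\in\De_+}(\la,\al)=2(\la,\rho)$. Nothing further is needed.
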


\begin{rem}
We can compute the value of $h(p_0)$ by using the Weyl character formula
and the fact
that $|a_\rho|^{2n}$ converges to $p_0$
as $n\to\infty$ in the norm topology.

Let $\ch(L(\La))$ be the character of $L(\La)$,
that is,
\begin{equation}
\label{eq:character}
\ch L(\La)=\sum_{\mu\in \Wt(\la)}\dim L(\La)_\mu e(\mu),
\end{equation}
where $e(\cdot)$ denotes the formal exponential
(see \cite[\S 10.2, p.172]{Kac}).
By the Weyl character formula
(see, for example, \cite[Theorem 10.4, p.173]{Kac}
or \cite[Proposition 12, p.203]{Kl-Sch}),
we obtain
\[
\ch L(\La)
=
\frac{\sum_{w\in W}
(-1)^{\ell(w)}e(w(\La+\rho)-\rho)}{\prod_{\al\in\De_+}(1-e(-\al))}.
\]

If we put $e(\mu)=q^{2(\mu,\rho)}$ in (\ref{eq:character}),
then it follows from (\ref{eq:quantum-dim})
that
$\ch L(\La)$ is equal to $\dim_q L(\La)$.
Thus
\[
\dim_q L(\La)
=
\frac{\sum_{w\in W}
(-1)^{\ell(w)}q^{2(w(\La+\rho)-\rho,\rho)}}
{\prod_{\al\in\De_+}\left(1-q^{-2(\al,\rho)}\right)}.
\]
Now we put $\La=n\rho$ in the equality above.
Then
\[
\dim_q L(n\rho)
=
q^{-2(\rho,\rho)}
\frac{\sum_{w\in W}
(-1)^{\ell(w)}q^{2(n+1)(w\rho,\rho)}}
{\prod_{\al\in\De_+}\left(1-q^{-2(\al,\rho)}\right)}.
\]

Recall that $w_0\rho$ is the lowest weight of $L(\rho)$.
In particular, $w_0\rho\leq w\rho$ for any $w\in W$
and the equality holds if and only if $w=w_0$.
Hence if $w\neq w_0$,
then $(w\rho,\rho)>(w_0\rho,\rho)=-(\rho,\rho)$,
and
$q^{2(n+1)(w\rho,\rho)}q^{2n(\rho,\rho)}\to0$
as $n\to\infty$.
From
$\vph(p_0)=\lim_n h(|a_\rho|^{2n})=\lim_n h(|a_{n\rho}|^2)$
and (\ref{eq:Haar-a2n}),
we have
\begin{align*}
h(p_0)
&=
q^{2(\rho,\rho)}
\lim_{n\to\infty}
\frac{\prod_{\al\in\De_+}\left(1-q^{-2(\al,\rho)}\right)}
{\sum_{w\in W}(-1)^{\ell(w)}q^{2(n+1)(w\rho,\rho)+2n(\rho,\rho)}}
\\
&=
q^{2(\rho,\rho)}
\lim_{n\to\infty}
\frac{\prod_{\al\in\De_+}\left(1-q^{-2(\al,\rho)}\right)}
{(-1)^{\ell(w_0)}q^{-2(\rho,\rho)}}
\\
&=
q^{4(\rho,\rho)}(-1)^{\ell(w_0)}\prod_{\al\in\De_+}
\left(1-q^{-2(\al,\rho)}\right).
\end{align*}
Using the fact that $\ell(w_0)$ equals $|\De_+|$,
the last term above is equal to
\begin{align*}
q^{4(\rho,\rho)}\prod_{\al\in\De_+}\left(q^{-2(\al,\rho)}-1\right)
&=
q^{4(\rho,\rho)}\prod_{\al\in\De_+}q^{-2(\al,\rho)}
\prod_{\al\in\De_+}\left(1-q^{2(\al,\rho)}\right)
\\
&=\prod_{\al\in\De_+}\left(1-q^{2(\al,\rho)}\right)
\end{align*}
since $2\rho=\sum_{\al\in\De_+}\al$.
Hence we are done.
\end{rem}

It is also possible to compute the value of the Haar state
at each diagonal minimal projection.
Let $m\in\Z_+$ and
$p_m\col \ell^2(\Z_+)\to \C\vep_m$ the minimal projection.
Then for $\ell=1,\dots,k$, we have
\[
\pi_{i_\ell}
(u_{i_\ell})^{2s_{i_{\ell+1}}\cdots s_{i_k}\rho
(h_{i_\ell})}
p_m
=
q^{2m\left(\rho,\,s_{i_k}\cdots s_{i_{\ell+1}}\al_{i_\ell}\right)}
p_m.
\]
Therefore, we obtain the following result.
\begin{prop}
For $m_1,\dots,m_k\in\Z_+$, one has
\[
h(p_{m_1}\oti\cdots\oti p_{m_k})
=
h(p_0)
\prod_{\ell=1}^k
q^{2m_\ell\left(\rho,\,s_{i_k}\cdots s_{i_{\ell+1}}\al_{i_\ell}\right)}.
\]
\end{prop}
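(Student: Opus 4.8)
The plan is to read everything off from Theorem~\ref{thm:Haar-state-density} together with the explicit shape of $\pi_{w_0}(a_\rho)$ recorded in (\ref{eq:alam}). First I would observe that, under the identification $\lTG\cong B(H_{w_0})=B(H_{s_{i_1}}\oti\cdots\oti H_{s_{i_k}})$ set up above, the minimal projection $p_{m_1}\oti\cdots\oti p_{m_k}$ lies in $\lTG$; since $\lTG$ is the fixed point algebra of $\ga$, the conditional expectation $E_\ga$ acts trivially on it, and Theorem~\ref{thm:Haar-state-density} reduces the problem to
\[
h(p_{m_1}\oti\cdots\oti p_{m_k})
=
h(p_0)\,\Tr\bigl(|a_\rho|^2\,(p_{m_1}\oti\cdots\oti p_{m_k})\bigr),
\]
where $\Tr$ is the canonical trace of $\lTG$, that is, the operator trace on $B(H_{w_0})$.

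Next I would unwind $|a_\rho|^2$ inside $\lTG$. Putting $\la=\rho$ in (\ref{eq:alam}) presents $\pi_{w_0}(a_\rho)$ as a modulus-one multiple of $\bigotimes_{\ell=1}^k \pi_{i_\ell}(u_{i_\ell})^{n_\ell}$, with $n_\ell:=s_{i_{\ell+1}}\cdots s_{i_k}\rho(h_{i_\ell})$ a positive integer because $\rho$ is regular. By (\ref{eq:pi-x-u}) each tensor leg is a positive diagonal operator, so the phase cancels when one forms $|a_\rho|^2=\pi_{w_0}(a_\rho)^*\pi_{w_0}(a_\rho)$, giving $|a_\rho|^2=\bigotimes_{\ell=1}^k \pi_{i_\ell}(u_{i_\ell})^{2n_\ell}$. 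Since the operator trace on $B(H_{s_{i_1}})\oti\cdots\oti B(H_{s_{i_k}})$ factorizes over the legs and $\pi_{i_\ell}(u_{i_\ell})^{2n_\ell}p_{m_\ell}$ is the rank-one operator $q_{i_\ell}^{2n_\ell m_\ell}p_{m_\ell}$, the trace above equals $\prod_{\ell=1}^k q_{i_\ell}^{2n_\ell m_\ell}$. Finally, $W$-invariance of the standard form, together with $\mu(h_i)=2(\mu,\al_i)/(\al_i,\al_i)$ and $q_i=q^{(\al_i,\al_i)/2}$, rewrites $q_{i_\ell}^{2n_\ell m_\ell}$ as $q^{2m_\ell(\rho,\,s_{i_k}\cdots s_{i_{\ell+1}}\al_{i_\ell})}$ --- which is exactly the identity displayed just before the statement --- and multiplying over $\ell$ yields the assertion.

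There is no serious obstacle here; the argument is essentially a computation resting on Theorem~\ref{thm:Haar-state-density}. The points needing some care are bookkeeping ones: that $|a_\rho|^2=a_\rho^*a_\rho$ genuinely belongs to $\lTG$, which holds because it is $\ga$-fixed by (\ref{eq:torusact}); that the canonical trace in Theorem~\ref{thm:Haar-state-density} is normalized so that minimal projections have trace one, which is what lets it factorize as the tensor product of the canonical traces of the $B(\ell^2(\Z_+))$'s; and that the modulus-one ambiguity in (\ref{eq:alam}) is harmless once one passes to $|a_\rho|^2$.
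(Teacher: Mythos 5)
Your proposal is correct and follows exactly the route the paper takes: apply Theorem~\ref{thm:Haar-state-density} (with $E_\ga$ acting trivially on $\lTG$), read off from (\ref{eq:alam}) that $|a_\rho|^2$ is the diagonal operator $\bigotimes_\ell \pi_{i_\ell}(u_{i_\ell})^{2s_{i_{\ell+1}}\cdots s_{i_k}\rho(h_{i_\ell})}$, and evaluate its eigenvalue on $p_{m_1}\oti\cdots\oti p_{m_k}$ using $W$-invariance of the form and $\Tr(p_{m_1}\oti\cdots\oti p_{m_k})=1$. The bookkeeping points you flag (the phase cancelling in $|a_\rho|^2$, the normalization of $\Tr$) are exactly the ones implicit in the paper's one-line computation.
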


\section{Invariant cocycles and 2-cocycle deformations}

In this section,
we will introduce the notion of invariant cocycles
evaluated in $\lTG$
and determine them.
As an application,
we will describe the intrinsic group
of a twisted quantum group $G_{q,\Om}$.

\subsection{Invariant cocycles evaluated in $\lTG$}
Recall that the left $T$-action $\ga$ is faithful and ergodic on $Z(\lG)$.
For each fundamental weight
$\om_i\in P$, $i\in I$,
we take a unitary $v_{\om_i}$ in $Z(\lG)$
such that
$\ga_t(v_{\om_i})=\langle t,\om_i\rangle v_{\om_i}$
for $t\in T$.
Next,
for $\la=\sum_{i=1}^n a_i\om_i\in P$, $a_i\in\Z$,
we set
$v_\la:=v_{\om_1}^{a_1}\cdots v_{\om_n}^{a_n}$.
Then we have
\begin{equation}
\label{eq:vla}
\ga_t(v_\la)=\langle t,\la\rangle v_\la,
\quad
v_\la v_\mu=v_{\la+\mu}
\quad
\mbox{for }
t\in T,\ \la,\mu\in P.
\end{equation}
Note that $v_\la$'s are generating $Z(\lG)$
as a von Neumann algebra,
and $\lG=Z(\lG)\vee\lTG$.

Now for each $\la\in P$,
we introduce the unitary $w_\la$
defined by:
\begin{equation}
\label{eq:cocycle}
\de(v_\la)=(v_\la\oti1)w_\la.
\end{equation}
Since $\de$ and $\ga$ are commuting,
$w_\la$ belongs to $\lTG\oti \lG$.
From the above equation,
the cocycle relation $(w_\la\oti1)(\de\oti\id)(w_\la)=(\id\oti\de)(w_\la)$
holds.
Moreover, setting $\de^{w_\la}:=\Ad w_\la\circ\de$,
we have $\de^{w_\la}=\de$ on $\lG$
because $v_\la$ is a central unitary.

Let us denote by
$Z_{\rm inv}^1(\de,\lTG)$
the collection of $\lTG$-valued cocycles $w$
such that $\de^w=\de$ on $\lTG$.
In general,
when $\al$ is an action of $G_q$
on a von Neumann algebra $\cN$,
we denote by $Z_{\rm inv}^1(\al,\cN)$
the set of $\al$-cocycles $w$
with $\al^w=\al$.
We call such $w$ an \emph{invariant cocycle}.

We know $w_\la$'s are invariant cocycles of
the action of $\de$ on $\lTG$,
but in fact, they are all.

\begin{thm}
\label{thm:descript-coc}
In the above setting,
the following statements hold:
\begin{enumerate}
\item
$Z_{\rm inv}^1(\de,\lTG)
=\{w_\la\mid \la\in P\}$;

\item
The map $P\ni \la\mapsto w_\la \in Z_{\rm inv}^1(\de,\lTG)$
is a group isomorphism.
\end{enumerate}
\end{thm}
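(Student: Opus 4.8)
The plan is to show that the map $\la \mapsto w_\la$ is an injective group homomorphism whose image exhausts $Z^1_{\rm inv}(\de,\lTG)$. Injectivity and the homomorphism property of (2) are essentially bookkeeping: from the relation $\de(v_\la) = (v_\la\oti 1)w_\la$ together with $v_\la v_\mu = v_{\la+\mu}$ (equation \eqref{eq:vla}) and the fact that $\de$ is a $*$-homomorphism, one computes $\de(v_{\la+\mu}) = \de(v_\la)\de(v_\mu) = (v_\la\oti 1)w_\la (v_\mu\oti 1)w_\mu$, and since $v_\mu$ is central in $\lG$ and $w_\la \in \lTG\oti\lG$, the factor $v_\mu\oti 1$ commutes past $w_\la$; this yields $w_{\la+\mu} = w_\la w_\mu$. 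Injectivity follows because $w_\la = 1$ forces $\de(v_\la) = v_\la\oti 1$, i.e.\ $v_\la \in Z(\lG)^\ga = \C$ by the central ergodicity established in Theorem \ref{thm:factor}, while $\ga_t(v_\la) = \langle t,\la\rangle v_\la$ forces $\la = 0$. So the real content is the surjectivity claim (1).

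For surjectivity, I would take an arbitrary $w \in Z^1_{\rm inv}(\de,\lTG)$ and produce a weight $\la\in P$ with $w = w_\la$. The key idea is that an invariant cocycle $w$ satisfies $\de^w = \Ad w\circ\de = \de$ on $\lTG$, so $w$ \emph{intertwines $\de$ with itself} on $\lTG$; since $\lG = Z(\lG)\vee\lTG$ with $\lTG$ the type I$_\infty$ factor, and since the coproduct of $\lG$ is determined by its restriction to $\lTG$ together with the action of $\ga$ on the central part, one expects $w$ to be related to a unitary in $Z(\lG)\oti\lG$ of the form given in \eqref{eq:cocycle}. Concretely, I would consider the unitary $u := w^*\,\de(?)$-type correction, or more cleanly: use the cocycle $w$ to perturb the embedding, obtaining a new $G_q$-equivariant copy of $\lTG$ inside $\lG$, and invoke $\Aut_{G_q}(\lTG) = \{\id\}$ (Corollary \ref{cor:equiv} and the remark following it). The cocycle identity $(w\oti 1)(\de\oti\id)(w) = (\id\oti\de)(w)$ says precisely that $w$ defines a new action $\de^w$; the invariance condition $\de^w|_{\lTG} = \de|_{\lTG}$ should then let us transport $w$ to a coboundary in $Z(\lG)$. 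The main obstacle will be turning ``$w$ is an $\lTG$-valued invariant cocycle'' into ``$w$ comes from a central unitary $v_\la$'': one must show that the cocycle can be implemented by a unitary $v \in Z(\lG)$ with $\de(v) = (v\oti 1)w$, and that such $v$ is automatically a $\ga$-eigenvector with eigencharacter in $P$.

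Here is the route I would pursue for that step. Given $w\in Z^1_{\rm inv}(\de,\lTG)$, I would extend it to $\lG$ and consider the map $x \mapsto w\de(x)w^*$; on $\lTG$ this is $\de$, and on $Z(\lG)$ one checks using $\de(v_\la) = (v_\la\oti 1)w_\la$ that $\de^w$ and $\de$ differ by conjugation on the central generators. Since $\ga\oti\id$ commutes with $\de$ and $w\in\lTG\oti\lG$ is $\ga$-invariant in the first leg, $\de^w$ is still a $G_q$-action on $\lG$ restricting to the identity embedding on $\lTG$; it therefore differs from $\de$ only through a choice of central unitaries, and by the faithfulness and ergodicity of $\ga$ on $Z(\lG)$ (Theorem \ref{thm:factor}(1)), that choice is parametrized exactly by a character of $T$, i.e.\ by some $\la\in P$, giving $w = w_\la$. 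Concretely, one can compare $\de^w(v_{\om_i})$ with $\de(v_{\om_i})$, extract a unitary $z_i\in Z(\lG)$ with $\de(z_i v_{\om_i}) = (z_i v_{\om_i}\oti 1)w\cdots$, and then argue $z_i$ is a scalar times a $\ga$-eigenvector. I expect the delicate point to be verifying that $w$, a priori only in $\lTG\oti\lG$, actually lies in the range of the coboundary construction — i.e.\ checking the compatibility of $w$ with the full $\lG$-coproduct rather than just the $\lTG$-part — and this is where the identity $\lG = Z(\lG)\vee\lTG$ together with Corollary \ref{cor:equiv} does the essential work.
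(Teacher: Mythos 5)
Your part (2) matches the paper's argument, and you have correctly identified both the target of part (1) --- produce a unitary $v$ with $\de(v)=(v\oti1)w$ --- and the endgame: write $v=v_\la v_1$ with $v_1\in\lTG$ using the $\ga$-eigenvector decomposition, then kill $v_1$ via $\Aut_{G_q}(\lTG)=\{\id\}$. But the existence of $v$ is precisely where your proposal has a genuine gap. The assertion that $\de^w$ ``differs from $\de$ only through a choice of central unitaries,'' and that this choice is ``parametrized exactly by a character of $T$,'' is the conclusion of the theorem, not something that follows from $\lG=Z(\lG)\vee\lTG$ and general principles. A priori an invariant cocycle need not be a coboundary; the vanishing of the relevant $1$-cohomology is the substantive content, and your sketch of comparing $\de^w(v_{\om_i})$ with $\de(v_{\om_i})$ to ``extract $z_i\in Z(\lG)$'' does not produce such a $z_i$ --- there is no reason the two expressions differ by conjugation by an element of $Z(\lG)\oti 1$.

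The paper fills this gap in two steps that are absent from your proposal. First, it proves that the twisted action $\de^w$ is \emph{ergodic on all of} $\lG$: given $x$ fixed by $\de^w$, one reduces to a $\ga$-eigenvector, writes $x=v_\mu y$ with $y\in\lTG$, derives $ww_\mu w^*\de(y)=y\oti1$, upgrades $y$ to a unitary by ergodicity of $\de$ on $\lTG$, shows $\Ad y^*\in\Aut_{G_q}(\lTG)$ hence $y\in\C$ by Corollary \ref{cor:equiv} and factoriality, and concludes $\mu=0$. Second, it runs Connes' $2\times2$ matrix trick: with $\cN=M_2(\C)\oti\lG$ and the cocycle $e_{11}\oti1+e_{22}\oti w$, the crossed product is a type I$_\infty$ factor, so the fixed point algebra is a type I factor in which $e_{11}\oti1$ and $e_{22}\oti1$ are \emph{minimal} projections --- minimality being exactly what the ergodicity of $\de$ and of $\de^w$ buys --- hence equivalent, and a partial isometry implementing the equivalence yields the unitary $v$ with $\de(v)=(v\oti1)w$. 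Without these two steps (or a substitute for them), your argument does not close.
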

\begin{proof}
(1).
Let $w\in Z_{\rm inv}^1(\de,\lTG)$.
First, we will observe that
the twisted right action $\de^w$ is ergodic on $\lG$.
Let $x\in \lG^{\de^w}$.
Since $w\in \lTG\oti\lG$,
the actions $\ga$ and $\de^w$ are commuting.
Thus we may and do assume that
$\ga_t(x)=\langle t,\mu\rangle x$
for some $\mu\in P$ and all $t\in T$.
Then $\ga$ fixes $y:=v_\mu^*x$,
and $y$ belongs to $\lTG$.
Using $w\de(x)w^*=\de^w(x)=x\oti1$
and $\de^w=\de$ on $\lTG$,
we have
\begin{equation}
\label{eq:wvmuy}
ww_\mu w^*\de(y)
=
w(v_\mu^*\oti1)\de(v_\mu)\cdot\de(y)w^*
=
(v_\mu^*\oti1)w\de(x)w^*
=y\oti1.
\end{equation}
In particular,
$y^*y$ and $yy^*$ are fixed by $\de$
since $ww_\mu w^*$ commutes with $\de(yy^*)$.
By ergodicity of $\de$ on $\lTG$,
we may and do assume that $y$ is a unitary.

Set the inner automorphism $\ps:=\Ad y^*$ on $\lTG$.
Then for $z\in\lTG$,
\begin{align*}
\de(\ps(z))
=
\de(y^*)\de(z)\de(y)
&=
(y^*\oti1)ww_\mu w^*\cdot
\de(z)\cdot ww_\mu^*w^*(y\oti1)
\quad
\mbox{by }(\ref{eq:wvmuy})\\
&=
(y^*\oti1)\de(z)(y\oti1)
=
(\ps\oti\id)(\de(z)).
\end{align*}
Namely, $\ps\in \Aut_{G_q}(\lTG)$.
However, we must have $\ps=\id$
by Corollary \ref{cor:equiv}.
It turns out from Theorem \ref{thm:factor}
that $y\in\C$.
From (\ref{eq:wvmuy}), we have $w_\mu=1$.
This shows $\mu=0$.
(See the proof of the second statement.)
Hence $x$ is a scalar,
and $\de^w$ is ergodic on $\lG$.

Second,
we will use Connes' $2\times2$ matrix trick.
Let $\cN:=M_2(\C)\oti \lG$
and $\{e_{ij}\}_{i,j=1}^2$ be a system of matrix units
of $M_2(\C)$.
We set $\al:=\id\oti\de$ and $\ovl{w}:=e_{11}\oti1+e_{22}\oti w$.
Then $\ovl{w}$ is an $\al$-cocycle.
We will show that
the projections
$p_1:=e_{11}\oti1$ and $p_2:=e_{22}\oti1$
are Murray--von Neumann equivalent
in $\cN^{\al}$.

Consider the crossed product $\cN\rti_{\al} G_q$.
Since $\al$ is a cocycle perturbation
of $\id\oti\de$,
$\cN\rti_{\al} G_q$ is canonically isomorphic to $M_2(\C)\oti \lG\rti_\de G_q$,
and also to $M_2(\C)\oti B(L^2(G_q))$.
Thus $\cN\rti_{\al} G_q$ is the type I$_\infty$ factor.
The fixed point algebra $\cN^{\al}$ is a corner of $\cN\rti_{\al}G_q$,
and $\cN^{\al}$ is a type I factor.
Since $p_1\cN^{\al} p_1=\lG^\de=\C$
and $p_2 \cN^{\al} p_2=\lG^{\de^w}=\C$,
$p_1$ and $p_2$ are minimal projections in $\cN^{\al}$.
Hence they are equivalent.

Let us take a unitary $v\in\lG$
such that
$e_{12}\oti v\in \cN^\al$,
that is, $\de(v)=(v\oti1)w$.
The ergodicity of $\de$ shows that $v$ is the unique solution
of this equation up to a scalar multiple.
Indeed, if $v'$ is a (not necessarily unitary) another solution,
then $\de(v'v^*)=(v'\oti1)w\cdot w^*(v^*\oti1)=v'v^*\oti1$,
and $v'v^*\in\C$.
Then by the commutativity
of $\ga$ and $\de$ and also the equality
$(\ga_t\oti\id)(w)=w$,
we can find a unique $\la\in P$ such that
$v=v_\la v_1$ for some unitary $v_1\in\lTG$.
However,
the invariance $\de^w=\de$ on $\lTG$
deduces $\Ad v_1\in \Aut_{G_q}(\lTG)$.
Hence $v_1$ is a scalar as before,
and $w=w_\la$.

(2).
Let $\la,\mu\in P$.
Since $v_\la$ is central and $v_\la v_\mu=v_{\la+\mu}$,
we have
\[
w_\la w_\mu
=
(v_\la^*\oti1)\de(v_\la)\cdot (v_\mu^*\oti1)\de(v_\mu)
=
(v_\la^*\oti1)(v_\mu^*\oti1)\de(v_\la)\cdot \de(v_\mu)
=w_{\la+\mu}.
\]

To show the injectivity, let $w_\la=1$.
Then $v_\la$ is fixed by $\de$,
and $v_\la$ is a scalar.
Hence $\langle t,\la\rangle=1$
for all $t\in T$,
and $\la=0$.
\end{proof}

\subsection{2-cocycle deformations}
As we have shown,
the quantum flag manifold $\lTG$ is a type I factor.
So, we would like to find a unitary
which implements the right action
$\de$ on $\lTG$.

\begin{lem}
\label{lem:U}
There exists a unitary $U\in\lTG\oti\lG$
such that
$\de(x)=U(x\oti1)U^*$
for all $x\in\lTG$.
\end{lem}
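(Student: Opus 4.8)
The plan is to read the equation $\de(x)=U(x\oti1)U^*$ as the statement that $U\in\lTG\oti\lG$ intertwines the two normal, unital, faithful $*$-homomorphisms $\iota,\de|_{\lTG}\colon\lTG\to\lTG\oti\lG$, where $\iota(x):=x\oti1$. (That $\de$ maps $\lTG$ into $\lTG\oti\lG$ follows from $(\ga_t\oti\id)\circ\de=\de\circ\ga_t$ together with $\lTG=\lG^{\ga}$.) Since $\lTG$ is a type I$_\infty$ factor by Theorem \ref{thm:factor}, fix a system of matrix units $\{e_{ij}\}_{i,j\in\N}$ for it with $\sum_ie_{ii}=1$ strongly. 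Because $\iota$ and $\de$ are normal and unital, it is enough to produce a partial isometry $w\in\lTG\oti\lG$ with $w^*w=e_{11}\oti1$ and $ww^*=\de(e_{11})$: then $U:=\sum_i\de(e_{i1})\,w\,(e_{1i}\oti1)$ (strongly convergent) is unitary and satisfies $U(e_{kl}\oti1)U^*=\de(e_{kl})$ for all $k,l$, hence $\de(x)=U(x\oti1)U^*$ on $\lTG$ by weak density.

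So the task reduces to showing $e_{11}\oti1\sim\de(e_{11})$ (Murray--von Neumann) in $M:=\lTG\oti\lG$. I would do this by showing that both projections are properly infinite with central support $1$ in $M$, since a properly infinite projection with central support $1$ in any von Neumann algebra is equivalent to the identity. The central support of $e_{11}\oti1$ is $1$ because $\lTG$ is a factor; and if $z\in Z(M)=1\oti Z(\lG)$ satisfies $z\de(e_{11})=0$, conjugating by $\de(u_i)$ with $u_i\in\lTG$, $u_i^*u_i=e_{11}$, $u_iu_i^*=e_{ii}$, gives $z\de(e_{ii})=0$ for every $i$ and hence $z=z\de(1)=0$; so $\de(e_{11})$ also has central support $1$. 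For proper infiniteness of $e_{11}\oti1$, note that by Theorem \ref{thm:factor} one has $\lG=Z(\lG)\vee\lTG$ with $\lTG$ a factor, so $\lG\cong Z(\lG)\oti\lTG$ is of type I$_\infty$; thus $(e_{11}\oti1)M(e_{11}\oti1)=\C e_{11}\oti\lG\cong\lG$ is properly infinite, and therefore so is $e_{11}\oti1$.

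The main obstacle is to prove that $\de(e_{11})$ is properly infinite in $M$ --- equivalently, since $\de(\lTG)$ is a type I factor, that its relative commutant $C:=\de(\lTG)'\cap M$ (to which the corner $\de(e_{11})M\de(e_{11})$ is isomorphic) is properly infinite. The tool I would use is the multiplicative unitary: regarding $\lTG\subseteq\lG\subseteq B(L^2(G_q))$ (the GNS space of $h$), one has $\de(e_{11})=V(e_{11}\oti1)V^*$ inside $B(L^2(G_q))\oti\lG$, and $e_{11}$ is an \emph{infinite} projection of $B(L^2(G_q))$ because $\lTG$ is a type I$_\infty$ subfactor whose relative commutant $\lTG'\cap B(L^2(G_q))$ is infinite dimensional (it contains the image of $\lG$ under the modular conjugation, as $h$ is faithful). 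Hence $e_{11}\oti1$, and so $\de(e_{11})$, is properly infinite in $B(L^2(G_q))\oti\lG$. The delicate point --- the real content --- is to descend this to the subalgebra $M=\lTG\oti\lG$: here one uses that $V$ conjugates $\lTG\oti1$ onto $\de(\lTG)$, so that $V^*CV$ equals the intersection of $(\lTG'\cap B(L^2(G_q)))\oti\lG$ with $V^*MV$, and one must verify that this intersection stays properly infinite. (Alternatively, one can work in the crossed product: $\de|_{\lTG}$ is ergodic on the factor $\lTG$ --- indeed $(\id\oti h)\circ\de=h(\cdot)1$ forces $\lTG^{\de}=\C$ --- so $\lTG\rti_\de G_q$ is a type I factor, and one analyzes the position of $\de(\lTG)$ inside it.)

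Granting proper infiniteness of $\de(e_{11})$, the proof concludes at once: $e_{11}\oti1$ and $\de(e_{11})$ are then properly infinite projections with central support $1$ in $M$, hence both equivalent to $1_M$ and so to each other; the intertwining partial isometry $w$ exists, and $U$ is constructed as above. I expect essentially all of the difficulty to be concentrated in the descent step of the previous paragraph.
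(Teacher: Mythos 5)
Your reduction is sound as far as it goes: it suffices to show that $e_{11}\oti 1$ and $\de(e_{11})$ are Murray--von Neumann equivalent in $M=\lTG\oti\lG$, and your verifications of the central supports and of the proper infiniteness of $e_{11}\oti1$ are correct. But the one step that carries the actual content of the lemma --- that $\de(e_{11})$ (equivalently $\de(p_0)$ for the distinguished minimal projection $p_0$) is infinite relative to $M$, not merely relative to the ambient $B(L^2(G_q))\oti\lG$ --- is exactly the step you leave unexecuted. Proper infiniteness does not pass to subalgebras, the proposed computation of $V^*\bigl(\de(\lTG)'\cap M\bigr)V$ is not carried out, and the parenthetical crossed-product alternative is likewise only a pointer. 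As written, the argument is incomplete at its crucial point, and you say so yourself.

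The paper settles precisely this point by a C$^*$-algebraic argument with no counterpart in your sketch. One observes that $p_0$ lies in $\CTG\cap C(G_q/T)$, so that $\de(p_0)\in\CTG\oti C(G_q/T)$, and works inside the type I factor $\lTG\oti L^\infty(G_q/T)$ (a factor, so ``infinite'' already implies equivalence to $p_0\oti1$ and the central-support bookkeeping disappears). If $\de(p_0)$ had finite rank there, then $e:=(\pi_{w_0}\oti\pi_{w_0})(\de(p_0))$ would be a finite rank, hence compact, projection on $H_{w_0}\oti H_{w_0}$; applying to the second leg the character $\eta_{w_0}$, which vanishes on $K(H_{w_0})$ and satisfies $\eta_{w_0}\circ\pi_{w_0}=\vep$, yields $\pi_{w_0}(p_0)=(\pi_{w_0}\oti\vep)(\de(p_0))=0$, a contradiction with the faithfulness of $\pi_{w_0}$ on $\CTG$. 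If you want to keep your architecture, you must supply an argument of this strength for the infiniteness of $\de(e_{11})$ inside $\lTG\oti\lG$; everything else in your proposal is routine by comparison.
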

\begin{proof}
Let $p_0$ be the distinguished minimal projection
of $\CTG$ as before.
Actually,
$p_0$ is also contained in $C(G_q/T)$,
and
$\de(p_0)\in\CTG\oti C(G_q/T)$.
Recall that $\lTG\oti L^\infty(G_q/T)$
is a type I factor.
We will show that
$\de(p_0)$ is an infinite projection.

Suppose that
$\de(p_0)$ is of finite rank.
Then $\de(p_0)(\CTG\oti C(G_q/T))\de(p_0)$
is finite dimensional.
Since the restrictions of $\pi_{w_0}$
on $\CTG$ and $C(G_q/T)$ are irreducible,
$e:=(\pi_{w_0}\oti\pi_{w_0})(\de(p_0))$
is a finite rank projection
of $B(H_{w_0})\oti B(H_{w_0})$.
In particular,
$e$ is a compact operator
on $H_{w_0}\oti H_{w_0}$.
Then we obtain
$(\id_{K(H_{w_0})}\oti \eta_{w_0})(e)=0$
since $K(H_{w_0})
=K(H_{s_{i_1}})\oti\cdots \oti K(H_{s_{i_k}})$
for a minimal expression $w_0=s_{i_1}\cdots s_{i_k}$.
However, since $\eta_{w_0}\circ\pi_{w_0}=\vep$,
we obtain $\pi_{w_0}(p_0)=0$.
It follows that $p_0=0$, a contradiction.
Hence $\de(p_0)$ is an infinite projection
of $\lTG\oti L^\infty(G_q/T)$.

Take a partial isometry $v\in \lTG\oti L^\infty(G_q/T)$
such that
$v^*v=p_0\oti1$ and $vv^*=\de(p_0)$.
Let $\{e_{ij}\}_{i,j=0}^\infty$
be a system of matrix units
of $\lTG$ such that $e_{00}=p_0$.
Then a unitary
$U:=\sum_{i=0}^\infty \de(e_{i0})v(e_{0i}\oti1)$
implements $\de$.
\end{proof}

\begin{rem}
By the same proof as the above,
we can show that
$(\pi_{w_0}\oti\pi_{w})(\de(p_0))$
is a projection of infinite rank
on $H_{w_0}\oti H_{w}$ for any $w\in W\neq\{e\}$.
\end{rem}

The coassociativity of $\de$,
implies that $(\id\oti\de)(U)^*U_{12}U_{13}$
commutes with $x\oti1\oti1$ for all $x\in\lTG$.
By factoriality of $\lTG$,
we obtain a unitary $\Om\in\lG\oti\lG$
such that
\[
U_{12}U_{13}=(\id\oti\de)(U)(1\oti\Om^*).
\]
Then $\Om$ satisfies the following 2-cocycle relation:
\[
(\Om\oti1)(\de\oti\id)(\Om)
=
(1\oti\Om)(\id\oti\de)(\Om).
\]
Denote by $\de_\Om$ the twisted coproduct $\Ad\Om\circ\de$.
Then thanks to \cite[Theorem 6.2]{DC},
the pair $(\lG,\de_\Om)$
becomes a new, in general non-compact,
locally compact quantum group in the sense of \cite{KV},
which we will denote by $G_{q,\Om}$.
Set $L^\infty(G_{q,\Om}):=L^\infty(G_q)$.
Readers are referred to \cite{DC-proj} for a general treatment
of projective representations.

We will not study a concrete description of $G_{q,\Om}$,
but describe a group-like elements.
A unitary $u\in L^\infty(G_{q,\Om})$
is called a \emph{group-like} element
when
$\de_\Om(u)=u\oti u$.
Denote by $\sG(G_{q,\Om})$ the collection of all group-like elements
of $G_{q,\Om}$,
which is called the \emph{intrinsic group} of $G_{q,\Om}$.

For a unitary $u\in L^\infty(G_{q,\Om})$,
we will put
\begin{equation}
\label{eq:wu}
w^u:=U(1\oti u)U^*\in \lTG\oti\lG.
\end{equation}

\begin{lem}
The unitary $w^u$ is a $\de$-cocycle
if and only if $u\in\sG(G_{q,\Om})$.
\end{lem}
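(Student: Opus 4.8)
The plan is to expand both sides of the $\de$-cocycle identity
\[
(w^u\oti1)(\de\oti\id)(w^u)=(\id\oti\de)(w^u)
\]
directly, writing everything through $U$ and $\Om$ in leg-numbering notation on $\lTG\oti\lG\oti\lG$, and to read off that the identity holds precisely when $\de_\Om(u)=u\oti u$. Two facts carry the computation. First, by Lemma~\ref{lem:U} the restriction of $\de$ to $\lTG$ is $\Ad U$; so if we write $w^u=\sum_i c_i\oti d_i$ with $c_i\in\lTG$ and $d_i\in\lG$, apply $\de$ on the first leg, and use that $U_{12}$ commutes with operators on the third leg, we obtain
\[
(\de\oti\id)(w^u)=U_{12}(w^u)_{13}U_{12}^*.
\]
Second, by the construction of $\Om$ one has $U_{12}U_{13}=(\id\oti\de)(U)(1\oti\Om^*)$. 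Directly from $w^u=U(1\oti u)U^*$, writing $u_2:=1\oti u\oti1$ and $u_3:=1\oti1\oti u$, one has $(w^u)_{12}=U_{12}u_2U_{12}^*$ and $(w^u)_{13}=U_{13}u_3U_{13}^*$.

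Next I would expand the left-hand side. Using these formulas, together with the fact that $u_2$ commutes with $U_{13}$ and with $u_3$, the $U$'s telescope and one obtains
\[
(w^u\oti1)(\de\oti\id)(w^u)=U_{12}U_{13}\,(1\oti u\oti u)\,U_{13}^*U_{12}^*;
\]
substituting $U_{12}U_{13}=(\id\oti\de)(U)(1\oti\Om^*)$ turns this into
$(\id\oti\de)(U)\bigl(1\oti\Om^*(u\oti u)\Om\bigr)(\id\oti\de)(U)^*$.
For the right-hand side, applying the $*$-homomorphism $\id\oti\de$ to $w^u=U(1\oti u)U^*$ and using $(\id\oti\de)(1\oti u)=1\oti\de(u)$ gives
\[
(\id\oti\de)(w^u)=(\id\oti\de)(U)\bigl(1\oti\de(u)\bigr)(\id\oti\de)(U)^*.
\]
Since $(\id\oti\de)(U)$ is unitary, the cocycle identity is therefore equivalent to $\Om^*(u\oti u)\Om=\de(u)$ in $\lG\oti\lG$, i.e. to $u\oti u=\Om\de(u)\Om^*=\de_\Om(u)$, which is exactly the condition $u\in\sG(G_{q,\Om})$. (Incidentally, $\de^{w^u}=\de$ on $\lTG$ always holds, since $1\oti u$ commutes with $\lTG\oti1$, so a cocycle of this form is automatically an invariant cocycle.)

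The argument is essentially bookkeeping with leg numbers, so I do not expect a real obstacle. The one place where something beyond formal manipulation is used is the identity $(\de\oti\id)(w^u)=U_{12}(w^u)_{13}U_{12}^*$: here Lemma~\ref{lem:U} is what guarantees that $\de$ on the first leg is implemented by the \emph{same} unitary $U$ appearing in $w^u$, which is exactly what makes the $U$'s cancel when the left-hand side is assembled. After that, only the defining relation $U_{12}U_{13}=(\id\oti\de)(U)(1\oti\Om^*)$ and the definition $\de_\Om=\Ad\Om\circ\de$ are needed.
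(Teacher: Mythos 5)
Your proof is correct and follows essentially the same route as the paper: both compute $(w^u\oti1)(\de\oti\id)(w^u)=U_{12}u_2U_{13}u_3U_{13}^*U_{12}^*$ using that $\de|_{\lTG}=\Ad U$, then substitute $U_{12}U_{13}=(\id\oti\de)(U)(1\oti\Om^*)$ and compare with $(\id\oti\de)(w^u)=(\id\oti\de)(U)(1\oti\de(u))(\id\oti\de)(U^*)$ to reduce the cocycle identity to $\de_\Om(u)=u\oti u$. Your closing remark that $w^u$ is automatically an invariant cocycle is also exactly the observation the paper makes immediately after the lemma.
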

\begin{proof}
We have
\begin{align*}
(w^u\oti1)(\de\oti\id)(w^u)
&=
U_{12} u_2 U_{12}^*
\cdot
U_{12}(w^u)_{13}U_{12}^*
\\
&=
U_{12} u_2 U_{13}u_3U_{13}^*U_{12}^*
\\
&=
(\id\oti\de)(U)\Om_{23}^*
(1\oti u\oti u)
\Om_{23}(\id\oti\de)(U^*),
\end{align*}
and
\begin{align*}
(\id\oti\de)(w^u)
&=
(\id\oti\de)(U)
(1\oti\de(u))
(\id\oti\de)(U^*).
\end{align*}
Hence we are done.
\end{proof}

By Lemma \ref{lem:U} and (\ref{eq:wu}),
$w^u\de(x)=\de(x)w^u$ for all $x\in \lTG$.
Thus $w^u$ belongs to $Z_{\rm inv}^1(\de,\lTG)$
if $u\in\sG(G_{q,\Om})$.

\begin{thm}
\label{thm:intrinsic}
The map $\sG(G_{q,\Om})\ni u\mapsto w^u\in Z_{\rm inv}^1(\de,\lTG)$
is a group isomorphism.
In particular,
$\sG(G_{q,\Om})$ is isomorphic to $P=\widehat{T}$.
\end{thm}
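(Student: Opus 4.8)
The plan is to check, essentially by inspection, that $u\mapsto w^u$ is a group homomorphism and injective, then to prove surjectivity using the factoriality of $\lTG$, and finally to transport the description of $Z_{\rm inv}^1(\de,\lTG)$ obtained in Theorem \ref{thm:descript-coc} across this isomorphism. That the map takes values in $Z_{\rm inv}^1(\de,\lTG)$ has already been recorded just before the statement (via Lemma \ref{lem:U}), so it remains only to analyze the algebraic structure of the map.

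The homomorphism property is formal: since $U$ is unitary and $w^u=U(1\oti u)U^*$, for group-like $u,u'$ we get $w^{uu'}=U(1\oti u)(1\oti u')U^*=\bigl(U(1\oti u)U^*\bigr)\bigl(U(1\oti u')U^*\bigr)=w^u w^{u'}$, and by the preceding lemma $uu'$ is again group-like, so the product law is respected. Injectivity is equally immediate: $w^u=1$ forces $1\oti u=U^*1U=1$, hence $u=1$.

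The real content is surjectivity. Given $\la\in P$, I would take the invariant cocycle $w_\la\in Z_{\rm inv}^1(\de,\lTG)$ from (\ref{eq:cocycle}) and set $z:=U^*w_\la U\in\lTG\oti\lG$. Using Lemma \ref{lem:U} in the two forms $U(x\oti1)=\de(x)U$ and $(x\oti1)U^*=U^*\de(x)$ for $x\in\lTG$, together with the fact that $w_\la\in Z_{\rm inv}^1(\de,\lTG)$ means exactly $w_\la\de(x)w_\la^*=\de(x)$ for all $x\in\lTG$, one computes $(x\oti1)z=U^*\de(x)w_\la U=U^*w_\la\de(x)U=z(x\oti1)$. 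Thus $z$ lies in the relative commutant of $\lTG\oti1$ inside $\lTG\oti\lG$, which by factoriality of $\lTG$ (Theorem \ref{thm:factor}) is $\C1\oti\lG$; hence $z=1\oti u$ for a necessarily unitary $u\in\lG$, and by construction $w^u=U(1\oti u)U^*=w_\la$. Since $w^u=w_\la$ is a genuine $\de$-cocycle, the preceding lemma gives $u\in\sG(G_{q,\Om})$, so the map is onto.

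Combining the above, $u\mapsto w^u$ is a group isomorphism $\sG(G_{q,\Om})\to Z_{\rm inv}^1(\de,\lTG)$; composing with the inverse of the isomorphism $P\ni\la\mapsto w_\la$ of Theorem \ref{thm:descript-coc}(2) yields $\sG(G_{q,\Om})\cong P$, and $P=\widehat T$ by the duality between $P$ and $T$ recalled in Section 2. The one step that needs genuine input rather than formal manipulation is the surjectivity, precisely the identification $U^*w_\la U=1\oti u$; this is where the factoriality of the quantum flag manifold is indispensable, and everything else is bookkeeping.
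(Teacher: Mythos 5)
Your proposal is correct and follows the paper's proof essentially verbatim: the homomorphism and injectivity are the formal manipulations the paper dismisses as trivial, and the surjectivity argument — conjugating an invariant cocycle by $U$, observing the result commutes with $\lTG\oti1$, invoking factoriality of $\lTG$ to write it as $1\oti u$, and applying the preceding lemma to see $u$ is group-like — is exactly the paper's. The only cosmetic difference is that you instantiate at the specific cocycles $w_\la$ (leaning on Theorem \ref{thm:descript-coc}(1)) where the paper runs the same computation for an arbitrary $w\in Z_{\rm inv}^1(\de,\lTG)$; both are fine since Theorem \ref{thm:descript-coc} is needed for the final identification with $P$ in any case.
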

\begin{proof}
It is trivial that this map is a injective group homomorphism.
To show the surjectivity,
let $w\in Z_{\rm inv}^1(\de,\lTG)$.
Then for $x\in\lTG$,
we have $w\de(x)w^*=\de(x)$,
and so $U^*wU$ commutes with $x\oti1$.
By factoriality of $\lTG$,
there exists a unitary $u\in \lG$
such that $w=U(1\oti u)U^*$.
Since $w$ is a $\de$-cocycle,
$u$ is group-like with respect to $\de_\Om$
by the previous lemma.
The remaining statement follows from Theorem \ref{thm:descript-coc}.
\end{proof}

This result shows that
we have a Hopf algebra embedding of
$L^\infty(T)$ into $L^\infty(G_{q,\Om})$,
that is,
the $n$-dimensional torus $T$
is a ``quotient quantum group'' of $G_{q,\Om}$.

When $G_q=SU_q(2)$,
it has been proved that
$G_{q,\Om}$ is isomorphic to $\widetilde{E}_q(2)$,
Woronowicz's quantum $E(2)$ group
in \cite[Theorem 4.5]{DC-E2}.
In \cite[Theorem 2.1]{W-E2},
Woronowicz has classified
unitary representations of $\widetilde{E}_q(2)$.
In particular,
the intrinsic group of $\widetilde{E}_q(2)$
is generated by
the canonical unitary representation $v$ (see \cite[p.254, (1)]{W-E2}),
and is indeed isomorphic to $\Z\cong\widehat{T}$.

\section{Product type actions}
In this section,
we will study a product type action of $G_q$.
We will fix our notations.

Let $v$ be a unitary representation
of $G_q$ on a Hilbert space $H_v$
with $2\leq\dim H_v\leq\infty$.
Take a faithful invariant state $\ph\in B(H_v)_*$,
that is,
$\ph$ satisfies
$(\ph\oti\id)(v(x\oti1)v^*)=\ph(x)1$
for all $x\in B(H_v)$.
Note that $\ph$ is never tracial
since $(\id\oti f_1)(C^\la)\neq1$
for any non-zero $\la\in P_+$.

Consider the infinite tensor product
$(\cM,\vph):=\otimes_{m=1}^\infty(B(H_v),\ph)''$
that is a factor of type III.
The Connes' $S$-invariant is computed from the period
of $\ph$.
Let $\al\col \cM\ra \cM\oti \lG$ be
the product type action with respect to $v$
\cite{Iz-Poisson,INT,KNW}.
Let $E_\al\col\cM\ra\cM^\al$ be the conditional expectation
defined by $E_\al:=(\id\oti h)\circ\al$.
Then $\vph\circ E_\al=\vph$.

\subsection{Depth 2 inclusions}
In what follows, we always assume that
$\al$ is faithful,
that is,
the subspace $\al(\cM)(\cM\oti\C)$
is dense in $\cM\oti\lG$.
This is the case when each irreducible representation
of $G_q$
is contained
in a product unitary representations
$(v\oti \ovl{v})^{\oti m}$ for some $m\in\N$.
Then as remarked in \cite[p.509]{INT},
each irreducible is contained in
$v^{\oti m}$ for some $m\in\N$.
Therefore,
the faithfulness of $\al$
implies the generating property of the corresponding
probability measure on $\Irr(G_q)$.

Thanks to \cite[Corollary 3.9]{Iz-Poisson},
the relative commutant $\cQ:=(\cM^\al)'\cap\cM$
is non-trivial.
Moreover,
we know by \cite[Theorem 5.10]{Iz-Poisson},
\cite[Theorem A]{INT} and \cite[Corollary 4.11]{T-Poisson}
that there exists a $G_q$-equivariant isomorphism
from $\lTG$ onto $\cQ$.
In particular, $\cQ$ is a type I factor
by Theorem \ref{thm:factor}.
Then we have the following
tensor product decomposition:
\begin{equation}
\label{eq:tensor}
\cM=\cR\vee\cQ\cong \cR\oti\cQ,
\end{equation}
where $\cR:=\cQ'\cap\cM$.
Note that the invariant state $\vph$ is of the form $\vph|_\cR\oti\vph|_\cQ$.
Indeed, the modular automorphism group
$\si^\vph$ preserves $\cM^\al$,
and it does $\cQ$.
Hence,
by Takesaki's theorem \cite[p.309]{Tak},
there exists a unique conditional expectation
$F\col\cM\ra\cQ$ with $\vph\circ F=\vph$.
Then $F$ maps $\cR$ into the center $Z(\cQ)=\C$,
and
\[
\vph(xy)=\vph(F(x)y)=\vph(F(x))\vph(y)=\vph(x)\vph(y)
\quad
\mbox{for }
x\in\cR,\ y\in\cQ.
\]
We will study the inclusion $\cM^\al\subs\cR$.

\begin{lem}
\label{lem:depth2}
The inclusion $\cM^\al\subs\cR$ is irreducible and of depth 2.
\end{lem}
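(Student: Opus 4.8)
The plan is to verify the two properties separately, using the tensor product decomposition $\cM=\cR\oti\cQ$ with $\cQ$ the infinite type I factor $G_q$-isomorphic to $\lTG$, together with the crossed product picture of the fixed point algebra.

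First I would address irreducibility, i.e. $(\cM^\al)'\cap\cR=\C$. Since $\cM^\al\subs\cR\subs\cM$ and $\cQ=(\cM^\al)'\cap\cM$, we compute $(\cM^\al)'\cap\cR=((\cM^\al)'\cap\cM)\cap\cR=\cQ\cap\cR=\cQ\cap(\cQ'\cap\cM)=Z(\cQ)=\C$, where the last equality uses that $\cQ$ is a factor (Theorem \ref{thm:factor}). So irreducibility is essentially immediate from the setup.

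Next, for depth $2$: the standard criterion says the inclusion $\cM^\al\subs\cR$ has depth $2$ precisely when the Jones tower $\cM^\al\subs\cR\subs\cR_1\subs\cR_2$ satisfies that $(\cM^\al)'\cap\cR_2$ is generated by $(\cM^\al)'\cap\cR_1$ and $\cR'\cap\cR_2$; equivalently, after basic construction, the relative commutant $\cR'\cap\cR_2$ is a factor, or one identifies $\cR_1$ with $\cR\rti_{\al} G_q$ and $\cM^\al$ with a corner so that the tower is the one coming from the $G_q$-action. The natural route is to observe that $\al$ restricts to an action of $G_q$ on $\cR$: indeed $\al$ preserves $\cQ$ (it acts on $\cQ\cong\lTG$ by the restriction of $\de$) and hence by Takesaki's theorem and the commutativity with the modular group it preserves $\cR=\cQ'\cap\cM$ as well, with $\cR^\al=\cM^\al$. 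Then $\cR^\al\subs\cR\subs\cR\rti_\al G_q$ is the canonical tower attached to an action of a compact quantum group, and such towers are automatically of depth $2$ — this is the Hopf-algebraic/quantum-group Galois picture (e.g. via the results on crossed products recalled in Section 2.3, using that $\cR^\al$ is a corner of $\cR\rti_\al G_q$ by the minimal projection $e_1=(\id\oti h)(V)$ and that the basic construction of $\cR^\al\subs\cR$ is identified with $\cR\subs\cR\rti_\al G_q$). I would spell out the basic construction step: $\langle\cR,e_{\cR^\al}\rangle\cong\cR\rti_\al G_q$ because $E_\al=(\id\oti h)\circ\al$ is the canonical conditional expectation and the Jones projection corresponds to $1\oti e_1$; then the second basic construction gives $\cR\rti_\al G_q\subs(\cR\rti_\al G_q)\rti_{\hat\al}\hat G_q\cong\cR\oti B(L^2(G_q))$, and $\cR'\cap(\cR\oti B(L^2(G_q)))=1\oti B(L^2(G_q))$ is a factor, which is exactly the depth-$2$ condition.

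The main obstacle I expect is the bookkeeping needed to justify that the inclusion $\cM^\al\subs\cR$ really is (isomorphic to) the canonical inclusion $\cR^\al\subs\cR$ coming from a genuine $G_q$-action on $\cR$, rather than just an abstract irreducible subfactor; one must check that $\al$ descends to $\cR$ compatibly with $E_\al$, that $\cM^\al=\cR^\al$ (using $\cM=\cR\oti\cQ$ and $\cQ^{\al|_\cQ}=\C$, which follows from ergodicity of $\de$ on $\lTG$, Theorem \ref{thm:factor}), and that the Jones tower and the crossed-product tower coincide. Once this identification is in place, depth $2$ is a formal consequence of the Galois correspondence for compact quantum group actions; the only genuinely substantive input is the factoriality of $\lTG\cong\cQ$ from Theorem \ref{thm:factor}, which is what makes the tensor splitting and the triviality of $Z(\cQ)$ available.
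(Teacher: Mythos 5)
Your irreducibility argument is exactly the paper's and is fine. The depth-$2$ part, however, rests on a false premise: you assert that $\al$ restricts to an action of $G_q$ on $\cR=\cQ'\cap\cM$ because it preserves $\cQ$. For a coaction this implication fails. Knowing $\al(\cQ)\subs\cQ\oti\lG$ only tells you that $\al(x)$ commutes with $\al(y)$ for $y\in\cQ$, whereas membership of $\al(x)$ in $\cR\oti\lG=(\cQ\oti\C)'\cap(\cM\oti\lG)$ would require $\al(x)$ to commute with $y\oti1$; since $\al|_{\cQ}$ is conjugate to the nontrivial coproduct $\de$ on $\lTG$, these are genuinely different conditions, and in general $\al(\cR)\not\subs\cR\oti\lG$. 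The paper itself confirms this: the compact quantum group attached to the irreducible depth-$2$ inclusion $\cM^\al\subs\cR$ turns out to be the maximal torus $T$ (Theorem \ref{thm:depth2-torus}), not $G_q$, and the Remark following that theorem shows that to obtain a coaction living on $\cR$ one must conjugate $\al$ by the unitary $U$ of Lemma \ref{lem:U}, which yields an action of the twisted quantum group $G_{q,\Om}$ rather than of $G_q$. So the crossed-product tower $\cR^\al\subs\cR\subs\cR\rti_\al G_q$ that your argument invokes does not exist, and with it the whole depth-$2$ deduction collapses.

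The repair is the paper's route, which keeps your crossed-product idea but applies it where the action actually lives, namely on $\cM$. Pass to the Jones tower $\cM^\al\subs\cM\subs\cM_1\subs\cM_2$; by the splitting $\cM\cong\cR\oti\cQ$ (with $E_\al$ and $\vph$ splitting accordingly) this tower is of the form $\cR_i\oti\cQ_i$ with $\cQ_i$ type I factors, so it suffices to show that $(\cM^\al)'\cap\cM_2$ is a type I factor. For that one identifies the tower with $\cM^\al\oti\C\subs\al(\cM)\subs\cM\rti_\al G_q\subs\cM\oti B(L^2(G_q))$ --- a step that needs the faithfulness and integrability of $\al$, via \cite[Corollary 1.5]{INT}, to know that the canonical surjection $\cM\rti_\al G_q\to\cM_1$ is injective --- and then computes $(\cM^\al)'\cap\cM_2\cong\cQ\oti B(L^2(G_q))$, a type I factor since $\cQ\cong\lTG$ is one by Theorem \ref{thm:factor}. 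Your final computation of a relative commutant of the form $\C\oti B(L^2(G_q))$ is the right kind of endgame, but it has to be run one level up, on $\cM$, and then transported down to the tower of $\cM^\al\subs\cR$ through the type I tensor factors.
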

\begin{proof}
First we will show the irreducibility.
Let $x\in (\cM^\al)'\cap\cR$.
Then $x\in (\cM^\al)'\cap\cM=\cQ$,
but $x\in\cR=\cQ'\cap\cM$.
Hence $x\in Z(\cQ)=\C$.

Next we let $\cM^\al\subs\cR\subs\cR_1\subs\cR_2$ be the Jones tower.
We will show that $(\cM^\al)'\cap \cR_2$ is a type I factor.
Let $\cM^\al\subs\cM\subs\cM_1\subs\cM_2$ be the Jones tower.
By (\ref{eq:tensor}),
this is isomorphic to the following:
\[
\cM^\al\oti\C\subs \cR\oti \cQ\subs \cR_1\oti \cQ_1
\subs
\cR_2\oti \cQ_2,
\]
where $\cQ_1$ and $\cQ_2$ are type I factors.
Thus it suffices to show that
$(\cM^\al)'\cap \cM_2$ is a type I factor.

\begin{clm}
The Jones tower $\cM^\al\subs\cM\subs \cM_1\subs \cM_2$
is isomorphic to
$\cM^\al\oti\C\subs \al(\cM)\subs \cM\rti_\al G_q\subs \cM\oti B(L^2(G_q))$.
\end{clm}
\begin{proof}[Proof of Claim]
Since $\al$ is integrable,
there exists a canonical surjection from $\cM\rti_\al G_q$ onto
$\cM_1$ (see, for example, \cite[p.510]{INT} or \cite[Theorem 5.3]{V}).
Recall that $\al$ is faithful.
Thanks to \cite[Corollary 1.5]{INT},
we have an isomorphism from $\al(\cM)'\cap (\cM\rti_\al G_q)$
onto $\cM'\cap\cM_1$.
In particular, the canonical surjection from
$\cM\rti_\al G_q$ onto $\cM_1$ is an isomorphism.
Hence the tower $\cM^\al\subs\cM\subs\cM_1$
is isomorphic to $\cM^\al\oti\C\subs \al(\cM)\subs \cM\rti_\al G_q$.
The basic extension of $\al(\cM)\subs \cM\rti_\al G_q$
is realized as $\cM\oti B(L^2(G_q))$ through
the computation of
the modular conjugation of $\cM\rti_\al G_q$.
See \cite[Proof of Proposition 5.9]{V}
or \cite[Lemma 5.7]{Iz-can} for its proof.
\end{proof}

Hence $(\cM^\al)'\cap\cM_2$ is isomorphic to
$((\cM^\al)'\cap\cM)\oti B(L^2(G_q))
=\cQ\oti B(L^2(G_q))$,
which is a type I factor.
\end{proof}

The restriction of $E_\al$ on $\cR$ gives a conditional expectation
from $\cR$ onto $\cM^\al$.
Therefore,
there exists a compact quantum group and its minimal action $\be$
on $\cR$
with the fixed point algebra $\cM^\al$.
(See \cite{EN,L,Sz}.
It is worth mentioning that
Longo's sector-approach still works in our case.)
We will show that the quantum group
is nothing but the maximal torus $T$.

Note that
$\cR^\be$ can be of type II$_1$
though  $\cR$ is of type III.
Thus $\be$ is not dual in general
(see \cite[Proposition 5.2 (5)]{Iz-can}).
However, the action is semidual
(see, for example, \cite[Theorem 4.4]{NT},
\cite[Proposition 6.4]{V}
and
\cite[Theorem 2.2]{Y}).
So let us take the tensor product by $B(\ell^2)$
as follows:
\[
\ovl{\cM}:=B(\ell^2)\oti\cM,
\quad
\bal:=\id\oti\al.
\]
Also set $\ovl{\cR}:=B(\ell^2)\oti\cR$
and $\ovl{\cQ}:=\C\oti\cQ$.
Then we have
\[
\ovl{\cM}=\ovl{\cR}\vee\ovl{\cQ}\cong\ovl{\cR}\oti\ovl{\cQ},
\quad
\ovl{\cM}^{\bal}=B(\ell^2)\oti\cM^\al,
\quad
(\ovl{\cM}^{\bal})'\cap \ovl{\cM}
=
\ovl{\cQ}.
\]
Let $\pi$ be a $G_q$-equivariant isomorphism
from $\lTG$ onto $\ovl{\cQ}$,
which is unique by Corollary \ref{cor:equiv}.

Recall $w_\la\in Z_{\rm inv}^1(\de,\lTG)$
introduced in (\ref{eq:cocycle}).
Set an $\bal$-cocycle $w_\la^o$ defined by
\begin{equation}
\label{eq:wlao}
w_\la^o:=(\pi\oti\id)(w_\la).
\end{equation}
Then $w_\la^o\in Z_{\rm inv}^1(\bal,\ovl{\cQ})$
for all $\la\in P$.

Note that
$\ovl{\cM}\rti_{\bal}G_q$ is an infinite factor.
Since $\ovl{\cM}^{\bal}$ and $\ovl{\cM}^{\bal^{w_\la^o}}$
contain $B(\ell^2)\oti\C$,
they are also infinite factors.
Using the $2\times2$ matrix trick as before,
we obtain a unitary $u_\la\in\ovl{\cM}$
for $\la\in P$
such that
$\bal(u_\la)=(u_\la\oti1)w_\la^o$.
For $x\in\ovl{\cM}^\bal$,
we have
\[
\bal(u_\la x u_\la^*)
=
(u_\la\oti1)w_\la^o(x\oti1)(w_\la^o)^*(u_\la^*\oti1)
=
u_\la xu_\la^*\oti1.
\]
Hence $\th_\la:=\Ad u_\la$
gives an endomorphism on $\ovl{\cM}^\bal$.
We will show that $\th_\la$ is in fact an automorphism.

\begin{lem}
For any $\la\in P$,
$u_\la$ belongs to $\ovl{\cR}$.
\end{lem}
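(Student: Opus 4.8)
The plan is to realise $\ovl\cQ$ as the relative commutant of $\ovl\cM^{\bal}$ in $\ovl\cM$ and to play this off against the fact, just established, that $\th_\la=\Ad u_\la$ carries $\ovl\cM^{\bal}$ into itself. Since $\ovl\cM^{\bal}=B(\ell^2)\oti\cM^\al$ and $\cM^\al\subseteq\cM$, we already know that $(\ovl\cM^{\bal})'\cap\ovl\cM=\ovl\cQ$. Now $\Ad u_\la$ is an \emph{automorphism} of $\ovl\cM$, hence it sends relative commutants to relative commutants; as it only shrinks $\ovl\cM^{\bal}$, I would obtain
\[
u_\la\ovl\cQ u_\la^*=\bigl(\Ad u_\la(\ovl\cM^{\bal})\bigr)'\cap\ovl\cM\supseteq(\ovl\cM^{\bal})'\cap\ovl\cM=\ovl\cQ .
\]
Equivalently $u_\la^*\ovl\cQ u_\la\subseteq\ovl\cQ$, so $\ps:=\Ad u_\la^*|_{\ovl\cQ}$ is a normal, unital, injective $*$-endomorphism of $\ovl\cQ$. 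Everything then reduces to showing $\ps=\id$: indeed $\ps=\id$ says exactly that $u_\la$ commutes with $\ovl\cQ$, that is, $u_\la\in\ovl\cQ'\cap\ovl\cM=\ovl\cR$.

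Second, I would verify that $\ps$ is $G_q$-equivariant for $\bal|_{\ovl\cQ}$. For $y\in\ovl\cQ$ we have $\ps(y)=u_\la^* yu_\la\in\ovl\cQ$, and since $\bal(u_\la)=(u_\la\oti1)w_\la^o$ while $(u_\la^*\oti1)\bal(y)(u_\la\oti1)=(\ps\oti\id)(\bal(y))$ because $\bal(y)\in\ovl\cQ\oti\lG$, we get
\[
\bal(\ps(y))=\bal(u_\la)^*\bal(y)\bal(u_\la)=(w_\la^o)^*\,(\ps\oti\id)(\bal(y))\,w_\la^o .
\]
On the other hand $\ps(y)\in\ovl\cQ$ and $w_\la^o\in Z_{\rm inv}^1(\bal,\ovl\cQ)$, so $w_\la^o$ commutes with $\bal(\ps(y))$; substituting the displayed identity for $\bal(\ps(y))$ and using that $w_\la^o$ is unitary gives $(\ps\oti\id)(\bal(y))=w_\la^o\bal(\ps(y))(w_\la^o)^*=\bal(\ps(y))$. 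Hence $\bal\circ\ps=(\ps\oti\id)\circ\bal$ on $\ovl\cQ$.

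Finally I would pass from ``$\ps$ is an equivariant injective endomorphism'' to ``$\ps$ is an automorphism'' and then conclude with Corollary \ref{cor:equiv}. By Theorem \ref{thm:factor}, transported through the equivariant isomorphism $\pi$, the action $\bal|_{\ovl\cQ}$ is ergodic, so there is a unique invariant normal state $\vrho$ on $\ovl\cQ$; since $\vrho\circ\ps$ is again invariant, $\vrho\circ\ps=\vrho$, and $\ps$ therefore extends to an isometry $V$ of $L^2(\ovl\cQ,\vrho)$ intertwining the $G_q$-action. Decomposing $L^2(\ovl\cQ,\vrho)$ into its $G_q$-isotypic blocks $H_s\oti M_s$ — each finite dimensional because an ergodic action has finite spectral multiplicities (for $\ovl\cQ\cong\lTG$ the multiplicity of $L(\la)$ is $\dim L(\la)_0<\infty$) — the operator $V$ preserves each block and is injective, hence bijective, on it; so $V$, and with it $\ps$, is onto. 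Thus $\ps\in\Aut_{G_q}(\ovl\cQ)=\{\id\}$, so $u_\la$ commutes with $\ovl\cQ$ and $u_\la\in\ovl\cR$; the case $\la=0$ is trivial, as then $w_0^o=1$ and $u_0\in\ovl\cM^{\bal}\subseteq\ovl\cR$. The one point that needs genuine input beyond bookkeeping is this endomorphism-to-automorphism step, i.e. the finiteness of the spectral multiplicities of the ergodic action on $\lTG$; the equivariance computation above is the only other non-formal ingredient.
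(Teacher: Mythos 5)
Your proposal is correct and follows essentially the same route as the paper: show $\Ad u_\la^*$ restricts to a $G_q$-equivariant injective endomorphism of $\ovl\cQ$ (your relative-commutant identity is just a repackaging of the paper's direct computation that $u_\la^*xu_\la$ commutes with $\ovl\cM^{\bal}$), use the same cocycle/invariance computation for equivariance, promote it to an automorphism via finiteness of the spectral multiplicities of the ergodic action on $\ovl\cQ\cong\lTG$, and conclude with $\Aut_{G_q}(\lTG)=\{\id\}$ from Corollary \ref{cor:equiv}. The only difference is that you spell out the multiplicity argument on the $L^2$-level, where the paper states it in one line.
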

\begin{proof}
Let $x\in\ovl{\cQ}$ and $y\in\ovl{\cM}^\bal$.
Then
\[
u_\la^* x u_\la y
=
u_\la^* x\th_\la(y)u_\la
=
u_\la^* \th_\la(y)xu_\la
=
yu_\la^* x u_\la.
\]
Hence $\rho_\la:=\Ad u_\la^*$
defines an endomorphism on $\ovl{\cQ}$.
Moreover for $x\in \ovl{\cQ}$,
we have
\begin{align*}
\bal(\rho_\la(x))
&=
\bal(u_\la^*)\bal(x)\bal(u_\la)
=
(w_\la^o)^*(u_\la^*\oti1)\bal(x)(u_\la\oti1)w_\la^o
\\
&=
(w_\la^o)^*(\rho_\la\oti\id)(\bal(x))w_\la^o.
\end{align*}
Thus
\begin{align*}
\bal(\rho_\la(x))
&=
w_\la^o\bal(\rho_\la(x))(w_\la^o)^*
\quad
\mbox{because }
w_\la^o\in Z_{\rm inv}^1(\bal,\ovl{\cQ})
\\
&=
w_\la^o\cdot
(w_\la^o)^*(\rho_\la\oti\id)(\bal(x))w_\la^o
\cdot
(w_\la^o)^*
\\
&=
(\rho_\la\oti\id)(\bal(x)).
\end{align*}
Namely,
$\rho_\la$ is a $G_q$-equivariant embedding of
$\ovl{\cQ}$ into itself.
The injectivity of $\rho_\la$
implies that
spectral multiplicities of
$\rho_\la(\ovl{\cQ})$ and $\ovl{\cQ}$ must coincide,
and $\rho_\la(\ovl{\cQ})=\ovl{\cQ}$.
Hence $\rho_\la$ is a $G_q$-equivariant
automorphism on $\ovl{\cQ}\cong\lTG$.
By Corollary \ref{cor:equiv},
we obtain $\rho_\la=\id$,
that is,
$u_\la\in \ovl{\cQ}'\cap\ovl{\cM}=\ovl{\cR}$.
\end{proof}

Let $\la,\mu\in P$.
Since $u_\mu$ belongs to $\ovl{\cR}$,
we see $w_\la^o(u_\mu\oti1)=(u_\mu\oti1)w_\la^o$,
and
\begin{align*}
\bal(u_\la u_\mu)
&=
(u_\la\oti1)w_\la^o(u_\mu\oti1)w_\mu^o
=
(u_\la u_\mu\oti1)w_\la^o w_\mu^o
\\
&=
(u_\la u_\mu\oti1)w_{\la+\mu}^o
=
(u_\la u_\mu u_{\la+\mu}^*\oti1)
\bal(u_{\la+\mu}).
\end{align*}
It follows that
$c_{\la,\mu}:=u_\la u_\mu u_{\la+\mu}^*$
is contained in $\ovl{\cM}^\bal$,
and
\begin{equation}
\label{eq:cocycle-action}
\th_\la\circ\th_\mu
=
\Ad c_{\la,\mu}\circ\th_{\la+\mu}
\quad
\mbox{for all }
\la,\mu\in P.
\end{equation}

We will show that
$(\th,c)$ is a cocycle action of $P$
on $\ovl{\cM}^\bal$ as below.
(See \cite{Ocn}
for the definition of a cocycle action.)
If we put $\mu=-\la$,
we have $u_{\la+\mu}=u_0=1$,
and $\th_\la\circ\th_{-\la}=\Ad c_{\la,-\la}$.
This shows the surjectivity of $\th_\la$,
that is,
$\th_\la\in\Aut(\ovl{\cM}^\bal)$.

The 2-cocycle identity of $c$
is verified as follows:
for $\la,\mu,\nu\in P$,
\begin{align*}
c_{\la,\mu}c_{\la+\mu,\nu}
&=
u_\la u_\mu u_{\la+\mu}^*
\cdot
u_{\la+\mu} u_\nu u_{\la+\mu+\nu}^*
\\
&=
u_\la u_\mu u_\nu u_{\la+\mu+\nu}^*,
\end{align*}
and
\begin{align*}
\th_\la(c_{\mu,\nu})u_{\la,\mu+\nu}
&=
u_\la\cdot
u_\mu u_\nu u_{\mu+\nu}^*
\cdot
u_\la^*
\cdot
u_\la u_{\mu+\nu}u_{\la+\mu+\nu}^*
\\
&=
u_\la u_\mu u_\nu u_{\la+\mu+\nu}^*.
\end{align*}
From (\ref{eq:cocycle-action}),
it turns out that $(\th,c)$ gives a cocycle action
on an infinite factor $\ovl{\cM}^\bal$.
Then $c$ is in fact a coboundary
by \cite[Proposition 2.1.3]{Su-homoII}.
Take unitaries $u_\la'$ in $\ovl{\cM}^\bal$
for $\la\in P$
so that
$u_\la'\th_\la(u_\mu')c_{\la,\mu}(u_{\la+\mu}')^*=1$
for $\la,\mu\in P$.
By replacing $u_\la$ with $u_\la'u_\la$
if necessary,
we may and do assume that
our $u_\la$'s are satisfying
\begin{equation}
\label{eq:ula}
u_\la\in\ovl{\cR},
\quad
u_\la u_\mu=u_{\la+\mu},
\quad
\bal(u_\la)=(u_\la\oti1)w_\la^o
\quad
\mbox{for all }
\la,\mu\in P.
\end{equation}

Then we have an outer action
$\th$ of $P$ on $\ovl{\cM}^\bal$.
Indeed, if for some $\la\in P$,
$a\in\ovl{\cM}^\bal$ satisfies
$ax=\th_\la(x)a$ for all $x\in\ovl{\cM}^\bal$,
then $u_\la^*a\in (\ovl{\cM}^\bal)'\cap\ovl{\cR}=\C$.
This, however, implies that $u_\la\in \ovl{\cM}^\bal$
and $w_\la^o=1$, that is, $\la=0$.
We will prove that $\ovl{\cR}$ is actually generated by
$\ovl{\cM}^\bal$ and $\{u_\la\}_{\la\in P}$
by sector technique developed in \cite{ILP}.

Since the inclusion $\cN:=\ovl{\cM}^\bal\subs \ovl{\cR}$
comes from a compact quantum group action,
$\ovl{\cR}$ is the crossed product of
$\cN$
by the dual discrete quantum group action.
We would like to determine this action.
For the sake of this,
we should study the sector
$[\ga_\cN^{\ovl{\cR}}|_{\cN}]$
in $\Sect(\cN)$,
where $\ga_\cN^{\ovl{\cR}}$ denotes
the canonical endomorphism
from $\ovl{\cR}$ into $\cN$.
Since there exists a conditional expectation
from $\ovl{\cM}$ onto $\ovl{\cR}$,
we have a canonical embedding
${}_\cN L^2(\ovl{\cR})_\cN\subs
{}_\cN L^2(\ovl{\cM})_\cN$.
Hence $[\ga_\cN^{\ovl{\cR}}|_{\cN}]$
is contained in
$[\ga_\cN^{\ovl{\cM}}|_\cN]$.
So let us study ${}_\cN L^2(\ovl{\cM})_\cN$ first.
The author thanks Izumi for suggesting this method.

\begin{lem}
\label{lem:Hilb-embed}
For any $\la\in P_+$,
there exists a Hilbert space $\sH_\la$
with $s(\sH_\la)=1$ in $\ovl{\cM}$
which admits
an isometric $G_q$-isomorphism
from $L(\la)$ onto $\sH_\la$.
\end{lem}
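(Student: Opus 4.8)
The plan is to realize $L(\la)$ as a covariant Hilbert space with full support inside a type $\mathrm{I}_\infty$ subfactor of $\ovl{\cM}$ on which $\bal$ is conjugate to the adjoint action of a unitary corepresentation that contains $v(\la)$ with infinite multiplicity; the amplification coming from the $B(\ell^2)$ tensor factor is precisely what is needed to upgrade an abstract $G_q$-module embedding of $L(\la)$ into a genuine family of isometries in $\ovl{\cM}$ with mutually orthogonal ranges summing to $1$.

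First I would use the faithfulness of $\al$: as recalled above, each irreducible of $G_q$ occurs in some tensor power $v^{\oti m}$, so fix $m\in\N$ with $v(\la)\prec v^{\oti m}$. On the first $m$ tensor factors $\bigotimes_{k=1}^m B(H_v)\subset\cM$ the action $\al$ acts as $\Ad(v^{\oti m})$, so inside $\ovl{\cM}=B(\ell^2)\oti\cM$ I pass to the subfactor $\cP:=B(\ell^2)\oti\bigl(\bigotimes_{k=1}^m B(H_v)\bigr)\oti\C$, which contains $1_{\ovl{\cM}}$, is isomorphic to $B(\mathcal{H})$ with $\mathcal{H}:=\ell^2\oti H_v^{\oti m}$, and carries $\bal|_{\cP}=\Ad\vrho$ for the corepresentation $\vrho:=1_{\ell^2}\oti v^{\oti m}$. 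The heart of the proof is to produce a unitary intertwiner $W\colon L(\la)\oti\mathcal{H}\to\mathcal{H}$ from the tensor product corepresentation $v(\la)\oti\vrho$ onto $\vrho$. Granting this, set $V_i:=W(\xi_i\oti\,\cdot\,)$ for an orthonormal base $\{\xi_i\}_{i\in I_\la}$ of $L(\la)$; then $V_i^*V_j=\de_{ij}1$, so the $V_i$ are isometries with mutually orthogonal ranges, $\sum_i V_iV_i^*=WW^*=1$, and a leg-numbering computation using the intertwining identity for $W$ yields $\bal(V_i)=\sum_{j\in I_\la}V_j\oti v(\la)_{ji}$. Thus $\sH_\la:=\spa\{V_i\mid i\in I_\la\}$ is a Hilbert space in $\ovl{\cM}$ with orthonormal base $\{V_i\}$, $s(\sH_\la)=\sum_i V_iV_i^*=1$, and $\xi_i\mapsto V_i$ extends to an isometric $G_q$-isomorphism of $L(\la)$ onto $\sH_\la$; since $1_{\ovl{\cM}}\in\cP$, the support of $\sH_\la$ computed in $\cP$ is its support in $\ovl{\cM}$.

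I expect the main obstacle to be the existence of $W$, equivalently the isomorphism $v(\la)\oti\vrho\cong\vrho$ of $G_q$-representations, since tensoring by $v(\la)$ can a priori move outside the set of isotypes already present. The amplification by $\ell^2$ ensures that every isotype of $v^{\oti m}$ occurs in $\vrho$ with infinite multiplicity, so $W$ exists as soon as $v(\la)\oti v^{\oti m}$ and $v^{\oti m}$ share the same set of irreducible constituents; I would secure this by a suitable finite choice of block — regrouping the tensor factors of $\cM$ and invoking the identification of $B(H_v)$ with the $G_q$-module $v\oti\ovl{v}$ — so that the constituent set becomes stable under $v(\la)\oti(-)$, which is exactly what the generating property of the random walk attached to the faithful action $\al$ makes available; failing a purely representation-theoretic argument, the invariant cocycles $w_\la$ and the unitaries $u_\la\in\ovl{\cR}$ constructed earlier in this section can be brought in to transport a covariant Hilbert space of an isotype genuinely occurring in $\cM$ onto the isotype $L(\la)$.
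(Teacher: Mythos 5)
Your reduction to the existence of a unitary intertwiner $W\in(v(\la)\oti\vrho,\vrho)$ is where the argument breaks down, and the obstruction is not one you can remove by a cleverer finite choice of block. Writing $S_m$ for the set of irreducible constituents of $v^{\oti m}$, the amplification by $\ell^2$ does make every multiplicity infinite, so $v(\la)\oti\vrho\cong\vrho$ would indeed follow from the equality of constituent sets; but for $\la\neq0$ the set $S_m$ is \emph{never} stable under $v(\la)\oti(-)$: if $\La_0$ is the maximal highest weight occurring in $v^{\oti m}$, then $v(\la)\oti v(\La_0)$ contains the irreducible of highest weight $\la+\La_0$, which cannot occur in $v^{\oti m}$. (For $SU_q(2)$ with $v$ the spin-$1/2$ representation and $\la$ of spin $1/2$ the failure is even starker: $S_2=\{0,1\}$ while the constituents of $v(\la)\oti v^{\oti 2}$ are $\{1/2,3/2\}$, disjoint from $S_2$ by parity.) Regrouping factors or passing to $v\oti\ovl{v}$ does not help, since the same highest-weight growth recurs for any finite block, and on the whole of $\ovl{\cM}$ the action $\bal$ is not of the form $\Ad\vrho$ for a corepresentation $\vrho$ with entries in $\ovl{\cM}$. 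Your fallback via the unitaries $u_\la$ and cocycles $w_\la$ also cannot close the gap: those objects untwist only the one-dimensional invariant cocycles attached to weights $\mu\in P=\widehat{T}$, so conjugating by $u_\mu$ shifts a covariant Hilbert space by a character of $T$ rather than transporting one $G_q$-isotype to another.

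The paper avoids the fusion-theoretic requirement altogether by turning it into a cohomological one. Since $\cM^\al$ is not a finite type I factor, one can choose a \emph{unital} embedding $\ps\col B(L(\la))\to \C1_{\ell^2}\oti\cM^\al\subs\ovl{\cM}^{\bal}$; because $\Ima\ps$ sits in the fixed point algebra, $w:=(\ps\oti\id)(C^\la)$ is an $\bal$-cocycle, and the $2\times2$ matrix trick (the fixed point algebras of $\bal$ and $\bal^w$ both contain $B(\ell^2)\oti\C$, and the crossed product is a factor) produces a unitary $u\in\ovl{\cM}$ with $\bal(u)=(u\oti1)w$. Choosing isometries $V_i\in B(\ell^2)\oti\cM^\al$ with $V_iV_j^*=1\oti e_{ij}$ for matrix units $e_{ij}$ of $\Ima\ps$ (possible since $1\oti e_{i_0i_0}$ is infinite), the elements $W_i:=uV_i$ satisfy $W_i^*W_j=\de_{ij}$, $\sum_i W_iW_i^*=1$ and $\bal(W_i)=\sum_j W_j\oti C^\la_{ji}$, which is exactly your desired conclusion. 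In short: you do not need $v(\la)$ to occur spatially in a block of $\cM$ with the right stability property; you only need the cocycle $(\ps\oti\id)(C^\la)$ to be a coboundary, and that is what the stability argument delivers. Your formal computations downstream of $W$ are correct, but without this replacement the proposal does not yield the lemma.
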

\begin{proof}
Recall that $\cM^\al$ is not a type I factor.
Otherwise,
it follows that
$\cM=\cM^\al\vee\cQ$,
and $\cM$ would be of type I.
Thus we can take
a von Neumann algebra embedding
$\ps$
of $B(L(\la))$ into
$\C1_{\ell^2}\oti \cM^\al\subs \ovl{\cM}^\bal$.
Let $w:=(\ps\oti\id)(C^\la)$.
Then $w$ is an $\bal$-cocycle,
and $B(\ell^2)\oti\C$ is contained in
the fixed point algebra of $\ovl{\cM}$
by $\bal^w$.
Hence we can employ the $2\times2$-matrix trick
as usual,
we obtain a unitary $u\in \ovl{\cM}$
such that
$\bal(u)=(u\oti1)w$.

Let $\{e_{ij}\}_{i,j\in I}$
be a system of matrix units of
$\Ima \ps$
such that each $e_{ii}$ is minimal.
Fix an element $i_0$ in $I$.
Since $1_{\ell^2}\oti e_{i_0 i_0}$
is infinite projection,
there exists an isometry
in $V_{i_0}\in B(\ell^2)\oti\cM^\al$
such that
$V_{i_0}V_{i_0}^*=1_{\ell^2}\oti e_{i_0 i_0}$.
For $i\in I$,
we set $V_i:=(1\oti e_{ii_0})V_{i_0}$.
Then it turns out that
$V_i^*V_j=\de_{ij}1$
and
$V_iV_j^*=1\oti e_{ij}$
for $i,j\in I$.
We let $W_i:=uV_i$.
Then
\begin{align*}
\bal(W_i)
&=
\bal(u)\bal(V_i)
=
(u\oti1)w(V_i\oti1)
\\
&=
\sum_{j\in I}
(W_j\oti1)(V_j^*\oti1)w(V_i\oti1).
\end{align*}
Therefore,
the statement follows by setting
$\sH_\la:=\spa \{W_i\mid i\in I\}$.
\end{proof}

For $\la\in P_+$,
let $T_\la$ be an isometric $G_q$-isomorphism
from $L(\la)$ onto a Hilbert space $\sH_\la$
in $\ovl{\cM}$.
We let $V_{\mu_i}^\la:=T_\la(\xi_{\mu_i})$
for $\mu\in \Wt(\la)$
and $i\in I_\mu^\la$.
Then we obtain
\[
\bal(V_{\mu_i}^\la)
=\sum_{\nu\in\Wt(\la),\,j\in I_\nu^\la}
V_{\nu_j}^\la\oti C_{\nu_j,\mu_i}^\la.
\]
For $\la,\La\in P_+$,
$\mu\in\Wt(\la)$, $i\in I_\mu^\la$,
$\nu\in\Wt(\La)$ and $j\in I_\nu^\La$,
we have
\begin{align*}
E_\bal(V_{\mu_i}^\la (V_{\nu_j}^\La)^*)
&=
(\id\oti h)
(\bal(V_{\mu_i}^\la (V_{\nu_j}^\La)^*))
\\
&=
\sum_{\eta,\zeta,k,\ell}
V_{\eta_k}^\la (V_{\zeta_\ell}^\La)^*
\cdot
h(C_{\eta_k,\mu_i}^\la(C_{\zeta_\ell,\nu_j}^\la)^*)
\\
&=
\de_{\la,\La}
\de_{\mu,\nu}\de_{i,j}
(\dim_q L(\la))^{-1}F_{\mu_i,\mu_i}^\la
\sum_{\eta,\zeta,k,\ell}
\de_{\eta,\zeta}\de_{k,\ell}
V_{\eta_k}^\la (V_{\zeta_\ell}^\La)^*
\quad
\mbox{by }
(\ref{eq:Haar-ortho})
\\
&=
\de_{\la,\La}
\de_{\mu,\nu}\de_{i,j}
(\dim_q L(\la))^{-1}F_{\mu_i,\mu_i}^\la
\sum_{\eta,k}
V_{\eta_k}^\la (V_{\eta_k}^\La)^*
\\
&=
\de_{\la,\La}
\de_{\mu,\nu}\de_{i,j}
(\dim_q L(\la))^{-1}F_{\mu_i,\mu_i}^\la
\\
&=
\de_{\la,\La}
\de_{\mu,\nu}\de_{i,j}
(\dim_q L(\la))^{-1}q^{2(\mu,\rho)}
\quad
\mbox{by Lemma }
\ref{lem:Woronowicz}.
\end{align*}
So if we put
$W_{\mu_i}^\la:=(\dim_q L(\la))^{1/2}q^{-(\mu,\rho)}V_{\mu_i}^\la$,
then $\{W_{\mu_i}^\la\}_{\mu,i}$ is a linear base
of $\sH_\la$ such that
\begin{equation}
\label{eq:orthogonal}
E_\bal(W_{\mu_i}^\la(W_{\nu_j}^\La)^*)
=\de_{\la,\La}\de_{\mu,\nu}\de_{i,j}.
\end{equation}

Now we let
\[
\si_\la(x)
:=
\sum_{\mu\in\Wt(\la),\,i\in I_\mu^\la}V_{\mu_i}^\la x (V_{\mu_i}^\la)^*
\quad
\mbox{for }
x\in\cN.
\]
Then $\si_\la$ is an endomorphism on $\cN$.
We will determine the intertwiner space
$(\si_\la,\si_\la)$.
Recall $v_\mu\in Z(\lG)$
and
$w_\mu\in Z_{\rm inv}^1(\de,\lTG)$
introduced in (\ref{eq:vla}) and (\ref{eq:cocycle}).
Then we set
\[
a_{\mu_i}^\la
:=
\sum_{\nu\in \Wt(\la),\,j\in I_\nu^\la}
V_{\nu_j}^\la \pi((C_{\mu_i,\nu_j}^\la)^*v_\mu)
u_\mu^*
\quad
\mbox{for }
\la\in P_+,
\
\mu\in\Wt(\la),
\
i\in I_\mu^\la,
\]
where we recall that
$u_\mu$ is satisfying (\ref{eq:ula}).
Note that $(C_{\mu_i,\nu_j}^\la)^*v_\mu$
is contained in $\lTG$,
and $\pi((C_{\mu_i,\nu_j}^\la)^*v_\mu)$
is well-defined.

\begin{lem}
Let $\la\in P_+$.
Then the following statements hold:
\begin{enumerate}
\item
For all $\mu\in\Wt(\la)$,
$\{a_{\mu_i}^\la\}_{i\in I_\mu^\la}$
is an orthonormal base of $(\th_\mu,\si_\la)$;

\item
$(\si_\la,\si_\la)$
is linearly spanned by
$a_{\mu_i}^\la(a_{\mu_j}^\la)^*$
for $\mu\in\Wt(\la)$ and $i,j\in I_\mu^\la$.
\end{enumerate}
\end{lem}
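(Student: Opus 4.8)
The plan is to verify directly that each $a_{\mu_i}^\la$ lies in $\cN=\ovl{\cM}^\bal$, that it intertwines $\th_\mu$ into $\si_\la$, and that these elements form an orthonormal family in $(\th_\mu,\si_\la)$ whose range projections sum to $\si_\la(1)$; statements (1) and (2) then follow by the routine decomposition of an endomorphism along an orthonormal family of isometric intertwiners. The element $a_{\mu_i}^\la$ is set up precisely so that the cocycle factor $w_\mu^o$ appearing in $\bal(u_\mu^*)$ annihilates the factor $w_\mu$ produced by $\de(v_\mu)$, and making this cancellation work is the crux.

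\emph{That $a_{\mu_i}^\la\in\cN$.} First I would note that $(C_{\mu_i,\nu_j}^\la)^*v_\mu$ is $\ga$-fixed, hence lies in $\lTG$, so that $\pi$ applies to it: indeed $\ga_t((C_{\mu_i,\nu_j}^\la)^*)=\langle t,\mu\rangle^{-1}(C_{\mu_i,\nu_j}^\la)^*$ by (\ref{eq:torusact}) while $\ga_t(v_\mu)=\langle t,\mu\rangle v_\mu$ by (\ref{eq:vla}), and $v_\mu$ is central in $\lG$. Then I would expand $\bal(a_{\mu_i}^\la)$ using $\bal(V_{\nu_j}^\la)=\sum_{\eta,k}V_{\eta_k}^\la\oti C_{\eta_k,\nu_j}^\la$, the equivariance $\bal\circ\pi=(\pi\oti\id)\circ\de$, the relation $\de(v_\mu)=(v_\mu\oti1)w_\mu$ of (\ref{eq:cocycle}), and $\bal(u_\mu)=(u_\mu\oti1)w_\mu^o$ of (\ref{eq:ula}) and (\ref{eq:wlao}). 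In the resulting expression the factor $w_\mu^o$ coming from $\bal\big(\pi((C_{\mu_i,\nu_j}^\la)^*v_\mu)\big)$ meets the factor $(w_\mu^o)^*$ coming from $\bal(u_\mu^*)$ and cancels, and then the unitarity relation $\sum_{\nu,j}C_{\eta_k,\nu_j}^\la(C_{\zeta_\ell,\nu_j}^\la)^*=\de_{\eta,\zeta}\de_{k,\ell}$ of the matrix $C^\la$ collapses the remaining double sum, leaving $\bal(a_{\mu_i}^\la)=a_{\mu_i}^\la\oti1$.

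\emph{Intertwining and orthonormality.} Write $b_{\nu_j}:=\pi((C_{\mu_i,\nu_j}^\la)^*v_\mu)\in\ovl{\cQ}$, so $a_{\mu_i}^\la=\sum_{\nu,j}V_{\nu_j}^\la b_{\nu_j}u_\mu^*$. Using that $\ovl{\cQ}$ commutes with $\cN$ (since $\cN=\ovl{\cM}^\bal\subs\ovl{\cR}=\ovl{\cQ}'\cap\ovl{\cM}$) and that $(V_{\eta_k}^\la)^*V_{\nu_j}^\la=\de_{\eta,\nu}\de_{k,j}$ ($T_\la$ being an isometry onto the Hilbert space $\sH_\la$ in $\ovl{\cM}$), a short computation shows that both $a_{\mu_i}^\la\,\th_\mu(x)=a_{\mu_i}^\la u_\mu x u_\mu^*$ and $\si_\la(x)a_{\mu_i}^\la$ equal $\sum_{\eta,k}V_{\eta_k}^\la x b_{\eta_k}u_\mu^*$ for $x\in\cN$, so $a_{\mu_i}^\la\in(\th_\mu,\si_\la)$. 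For orthonormality I would compute $(a_{\nu_j}^\la)^*a_{\mu_i}^\la$; cancelling $(V^\la)^*V^\la$ reduces the inner part to $\sum_{\eta,k}v_\nu^*C_{\nu_j,\eta_k}^\la(C_{\mu_i,\eta_k}^\la)^*v_\mu=\de_{\mu,\nu}\de_{i,j}$ (centrality of the $v$'s together with $v_\nu^*v_\mu=v_{\mu-\nu}$, $v_0=1$ and unitarity of $C^\la$), and then $u_\nu u_\mu^*=u_{\nu-\mu}$, $u_0=1$, $\pi(1)=1$ give $(a_{\nu_j}^\la)^*a_{\mu_i}^\la=\de_{\mu,\nu}\de_{i,j}$. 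Since $\cN$ is a factor (indeed $\cN'\cap\cN\subs(\ovl{\cM}^\bal)'\cap\ovl{\cR}=\C$; cf.\ Lemma \ref{lem:depth2}), $(\th_\mu,\th_\mu)=Z(\cN)=\C$, so $\th_\mu$ is irreducible and $\{a_{\mu_i}^\la\}_{i\in I_\mu^\la}$ is an orthonormal family in the Hilbert space $(\th_\mu,\si_\la)$ in $\cN$.

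\emph{Exhaustion and conclusion.} Put $p_{\mu,i}:=a_{\mu_i}^\la(a_{\mu_i}^\la)^*\in(\si_\la,\si_\la)$; by orthonormality these are mutually orthogonal projections, and the identities $b_{\eta_k}b_{\zeta_\ell}^*=\pi((C_{\mu_i,\eta_k}^\la)^*C_{\mu_i,\zeta_\ell}^\la)$, $\sum_{\mu,i}(C_{\mu_i,\eta_k}^\la)^*C_{\mu_i,\zeta_\ell}^\la=\de_{\eta,\zeta}\de_{k,\ell}$, and $\sum_{\eta,k}V_{\eta_k}^\la(V_{\eta_k}^\la)^*=s(\sH_\la)=1$ (Lemma \ref{lem:Hilb-embed}) yield $\sum_{\mu\in\Wt(\la),\,i\in I_\mu^\la}p_{\mu,i}=\si_\la(1)=1$. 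Now $\th_\mu$ is irreducible and $\th_\mu$ and $\th_\nu$ are inequivalent for $\mu\neq\nu$ (since $\th_{\mu-\nu}=\Ad(u_\mu u_\nu^*)$ is outer, $\th$ being an outer action of $P$), so $(\th_\mu,\th_\nu)$ is $\{0\}$ for $\mu\neq\nu$ and $\C$ for $\mu=\nu$. Hence for $V\in(\th_\mu,\si_\la)$ the expansion $V=\sum_{\nu,k}a_{\nu_k}^\la\big((a_{\nu_k}^\la)^*V\big)$ keeps only the $\nu=\mu$ terms, giving $V\in\spa\{a_{\mu_i}^\la\}_{i}$ and proving (1); and for $c\in(\si_\la,\si_\la)$ the expansion $c=\sum_{\mu,i,\nu,j}p_{\mu,i}cp_{\nu,j}$ has $(a_{\mu_i}^\la)^*c\,a_{\nu_j}^\la$ vanish unless $\mu=\nu$, where it is a scalar, giving (2). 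The main obstacle is the bookkeeping in the step $a_{\mu_i}^\la\in\cN$: one must keep products like $(C_{\mu_i,\nu_j}^\la)^*v_\mu$ inside $\lTG$ before applying $\pi$, and carefully pair the $w_\mu$ with $w_\mu^o$ so that the cancellation placing $a_{\mu_i}^\la$ in $\cN$ really closes; the remaining steps are routine.
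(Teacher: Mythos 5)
Your proposal is correct and follows essentially the same route as the paper's proof: the same cancellation of $(\pi\oti\id)(w_\mu)=w_\mu^o$ against the $(w_\mu^o)^*$ from $\bal(u_\mu^*)$ to place $a_{\mu_i}^\la$ in $\ovl{\cM}^\bal$, the same unitarity relations for $C^\la$ giving orthonormality and $\sum_{\mu,i}a_{\mu_i}^\la(a_{\mu_i}^\la)^*=1$, and the same intertwining computation. Your concluding step merely makes explicit (via irreducibility and mutual inequivalence of the $\th_\mu$, which follow from the outerness of $\th$ established just before the lemma) what the paper leaves implicit in the identity $\si_\la(x)=\sum_{\mu,i}a_{\mu_i}^\la\th_\mu(x)(a_{\mu_i}^\la)^*$.
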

\begin{proof}
(1).
We will check that
$a_{\mu_i}^\la$ is contained in $\ovl{\cM}^\bal$.
Indeed,
\begin{align*}
\bal(a_{\mu_i}^\la)
&=
\sum_{\nu,j}
\bal(V_{\nu_j}^\la)
\cdot
(\pi\oti\id)(\de((C_{\mu_i,\nu_j}^\la)^*v_\mu))
\cdot
\bal(u_\mu^*)
\\
&=
\sum_{\nu,\eta,\zeta,j,k,\ell}
(V_{\eta_k}^\la\oti C_{\eta_k,\nu_j}^\la)
\cdot
(\pi\oti\id)
\left(
((C_{\mu_i,\zeta_\ell}^\la)^*
\oti
(C_{\zeta_\ell,\nu_j}^\la)^*)
(v_\mu\oti1)w_\mu
\right)
\\
&\quad\quad
\cdot
(w_\mu^o)^*(u_\mu^*\oti1)
\\
&=
\sum_{\nu,\eta,\zeta,j,k,\ell}
\de_{\eta,\zeta}\de_{k,\ell}
(V_{\eta_k}^\la\oti 1)
\cdot
(\pi\oti\id)
\left(
((C_{\mu_i,\zeta_\ell}^\la)^*
\oti
1)
(v_\mu\oti1)
\right)
\cdot
(u_\mu^*\oti1)
\\
&=
a_{\mu_i}^\la\oti1.
\end{align*}

Next we have the following for $x\in \ovl{\cM}^\bal$:
\begin{align*}
a_{\mu_i}^\la \th_\mu(x)
&=
\sum_{\nu,j}
V_{\nu_j}^\la \pi((C_{\mu_i,\nu_j}^\la)^*v_\mu)
u_\mu^*
\th_\mu(x)
\\
&=
\sum_{\nu,j}
V_{\nu_j}^\la \pi((C_{\mu_i,\nu_j}^\la)^*v_\mu)
x
u_\mu^*
\\
&=
\sum_{\nu,j}
V_{\nu_j}^\la x \pi((C_{\mu_i,\nu_j}^\la)^*v_\mu)
u_\mu^*
=
\si_\la(x)a_{\mu_i}^\la.
\end{align*}
Thus $a_{\mu_i}^\la\in(\th_\mu,\si_\la)$.
We have
\begin{align*}
(a_{\mu_i}^\la)^*a_{\nu_j}^\la
&=
\sum_{\eta,\zeta,k,\ell}
u_\mu
\pi(v_\mu^*C_{\mu_i,\eta_k}^\la)
(V_{\eta_k}^\la)^*
\cdot
V_{\zeta_\ell}^\la \pi((C_{\nu_j,\zeta_\ell}^\la)^*v_\nu)
u_\nu^*
\\
&=
\sum_{\eta,k}
u_\mu
\pi(v_\mu^*C_{\mu_i,\eta_k}^\la
(C_{\nu_j,\eta_k}^\la)^*v_\nu)
u_\nu^*
\\
&=
\de_{\mu,\nu}\de_{i,j},
\end{align*}
and
\begin{align*}
\sum_{\mu,i}
a_{\mu_i}^\la(a_{\mu_i}^\la)^*
&=
\sum_{\mu,\eta,\zeta,i,k,\ell}
V_{\eta_k}^\la \pi((C_{\mu_i,\eta_k}^\la)^*v_\mu)
u_\mu^*
\cdot
u_\mu
\pi(v_\mu^*C_{\mu_i,\zeta_\ell}^\la)
(V_{\zeta_\ell}^\la)^* 
\\
&=
\sum_{\eta,\zeta,k,\ell}
\de_{\eta,\zeta}\de_{k,\ell}
V_{\eta_k}^\la
(V_{\zeta_\ell}^\la)^* 
=1.
\end{align*}
Hence for $x\in\ovl{\cM}^\bal$,
we obtain
\[
\si_\la(x)
=
\sum_{\mu,i}
a_{\mu_i}^\la\th_\mu(x)(a_{\mu_i}^\la)^*,
\]
and we are done.
\end{proof}

By the previous lemma,
we get the following equality in $\Sect(\cN)$:
\begin{equation}
\label{eq:decomp-sigma}
[\si_\la]
=\bigoplus_{\mu\in\Wt(\la)} \dim L(\la)_\mu [\th_\mu].
\end{equation}

\begin{lem}
In $\Sect(\cN)$,
one has
\[
[\ga_\cN^{\ovl{\cM}}|_\cN]
=
\bigoplus_{\la\in P_+}
\dim L(\la)[\si_\la].
\]
\end{lem}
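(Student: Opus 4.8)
The plan is to convert the statement into a decomposition of the $\cN$--$\cN$ correspondence ${}_\cN L^2(\ovl{\cM})_\cN$ — both module actions being the restrictions of the $\ovl{\cM}$-bimodule structure of $L^2(\ovl{\cM})$ — and then to compute that correspondence from the Hilbert spaces $\sH_\la\subs\ovl{\cM}$ of Lemma~\ref{lem:Hilb-embed}. Since $E_\bal\col\ovl{\cM}\ra\cN$ is a normal faithful conditional expectation and $\ovl{\cM}$, $\cN$ are properly infinite, $L^2(\ovl{\cM})$ also realizes the standard form of $\cN$, and, just as for the canonical endomorphism of a finite-index inclusion (the only new feature being that $\ga_\cN^{\ovl{\cM}}$ is here an infinite direct sum), the correspondence attached to $\ga_\cN^{\ovl{\cM}}|_\cN\in\End(\cN)$ is exactly ${}_\cN L^2(\ovl{\cM})_\cN$. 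Thus, via the standard dictionary between $\Sect(\cN)$ and $\cN$--$\cN$ correspondences \cite{Iz-fusion,L-ind1,L-ind2}, the asserted identity is equivalent to: ${}_\cN L^2(\ovl{\cM})_\cN$ is the orthogonal Hilbert-space direct sum, over $\la\in P_+$, of $\dim L(\la)$ copies of the correspondence $L^2(\cN)_{\si_\la}$, meaning $L^2(\cN)$ with the standard left action and the right action precomposed with $\si_\la$.

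First I would split $L^2(\ovl{\cM})$ along the isotypic components of $\bal$. Because $s(\sH_\la)=1$, every $G_q$-equivariant linear map $T\col L(\la)\ra\ovl{\cM}$ is of the form $a\cdot T_\la(\cdot)$ for a unique $a\in\cN$: put $a:=\sum_{\mu,i}T(\xi_{\mu_i})(V_{\mu_i}^\la)^*$ and use $(V_{\mu_i}^\la)^*V_{\nu_j}^\la=\de_{\mu\nu}\de_{ij}$ together with the unitarity of $C^\la$ to check $a\in\cN$ and $aT_\la=T$. Hence the $L(\la)$-isotypic component $\cM_\la$ of $\ovl{\cM}$ equals $\ovl{\spa}^{\rm w}\{aV_{\mu_i}^\la\mid a\in\cN,\ \mu\in\Wt(\la),\ i\in I_\mu^\la\}$; the density statement recalled in Subsection~\ref{subsect:quantum subgrp} makes $\sum_\la\cM_\la$ $\si$-weakly dense in $\ovl{\cM}$, while the Schur orthogonality relations (cf.\ (\ref{eq:Haar-ortho})) together with the $E_\bal$-invariance of the chosen faithful normal state give $E_\bal(x^*y)=0$ for $x\in\cM_\la$, $y\in\cM_\La$ with $\la\neq\La$. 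So $L^2(\ovl{\cM})$ is the orthogonal direct sum, over $\la\in P_+$, of the $L^2$-closures of the $\cM_\la$, and each summand is a sub-correspondence.

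Next I would decompose the $\la$-summand. From $\si_\la(b)V_{\mu_i}^\la=V_{\mu_i}^\la b$ for $b\in\cN$ (immediate from $(V_{\nu_j}^\la)^*V_{\mu_i}^\la=\de_{\nu\mu}\de_{ji}$), the subspace $\cN V_{\mu_i}^\la\subs\ovl{\cM}$, with left and right multiplication by $\cN$, is a $\cN$-sub-bimodule which, as an algebraic bimodule over $\cN$, is a copy of $\cN$ with the standard left action and the right action precomposed with $\si_\la$; moreover $(V_{\mu_i}^\la)^*V_{\nu_j}^\la=\de_{\mu\nu}\de_{ij}$ and $\sum_{\mu,i}V_{\mu_i}^\la(V_{\mu_i}^\la)^*=1$ show that these sub-bimodules are $E_\bal$-orthogonal for distinct $(\mu,i)$ and generate $\cM_\la$. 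Passing to $L^2$-closures (done via the $\cN$-valued inner product coming from $E_\bal$; see below) therefore presents the $\la$-summand of ${}_\cN L^2(\ovl{\cM})_\cN$ as the orthogonal direct sum, over $\mu\in\Wt(\la)$ and $i\in I_\mu^\la$, of $\dim L(\la)$ copies of $L^2(\cN)_{\si_\la}$. Combining the three steps yields ${}_\cN L^2(\ovl{\cM})_\cN\cong\bigoplus_{\la\in P_+}\dim L(\la)\cdot L^2(\cN)_{\si_\la}$, hence $[\ga_\cN^{\ovl{\cM}}|_\cN]=\bigoplus_{\la\in P_+}\dim L(\la)[\si_\la]$; feeding (\ref{eq:decomp-sigma}) into this rewrites the same sector in terms of the $[\th_\mu]$.

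The hard part will be exactly this passage to $L^2$: the chosen state on $\ovl{\cM}$ is not tracial, so the GNS map $x\mapsto\widehat{x}$ is not a bimodule map and one cannot naively identify $\widehat{\cN V_{\mu_i}^\la}$ with $L^2(\cN)_{\si_\la}$. The clean remedy is to build ${}_\cN L^2(\ovl{\cM})_\cN$ intrinsically from $E_\bal$, i.e.\ from the $\cN$-valued inner product $E_\bal(xy^*)$ on $\ovl{\cM}$: by (\ref{eq:orthogonal}) the family $\{W_{\mu_i}^\la\}$ is then orthonormal, with reconstruction $x=\sum_{\la,\mu,i}E_\bal(x(W_{\mu_i}^\la)^*)W_{\mu_i}^\la$ for $x\in\cM_\la$, so that the decomposition above into copies of $\si_\la$ becomes manifest and independent of any choice of invariant state. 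The remaining verifications are routine: the $\si$-weak density used in Step~2, and the implication $aV_{\mu_i}^\la=0\Rightarrow a=0$ — from $\bal(aV_{\mu_i}^\la)=0$ and the linear independence of the $C_{\nu_j,\mu_i}^\la$ one gets $aV_{\nu_j}^\la=0$ for all $\nu,j$, whence $a=\sum_{\nu,j}aV_{\nu_j}^\la(V_{\nu_j}^\la)^*=0$.
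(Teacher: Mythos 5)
Correct, and essentially the paper's own proof: both arguments decompose ${}_\cN L^2(\ovl{\cM})_\cN$ using the Hilbert spaces $\sH_\la$ of Lemma~\ref{lem:Hilb-embed}, the weak density of the equivariant images of the $L(\la)$, and the orthogonality relations (\ref{eq:orthogonal}); where you pass to $L^2$ via the $E_{\bal}$-valued inner product, the paper instead exhibits the explicit projections $z_\la=\sum_{\mu,i}(W_{\mu_i}^\la)^*e_N W_{\mu_i}^\la$ lying in $\cN'\cap J_{\ovph}\cN'J_{\ovph}$, which is the same mechanism made concrete. The one point to watch is orientation: you carve out the subspaces $\ovl{\cN W_{\mu_i}^\la 1_{\ovph}}$, yielding right-twisted copies ${}_\cN L^2(\cN)_{\si_\la}$, whereas the paper uses $(W_{\mu_i}^\la)^*\,\ovl{\cN 1_{\ovph}}\cong{}_{\si_\la}L^2(\cN)_\cN$; under the dictionary the paper implicitly adopts (left twist $\leftrightarrow$ sector) your summands correspond to the conjugates $[\,\ovl{\si_\la}\,]=[\si_{-w_0\la}]$, so you should either fix your sector--correspondence convention explicitly or add the one-line remark that $\la\mapsto -w_0\la$ permutes $P_+$ and preserves $\dim L(\la)$, which makes the two total direct sums coincide.
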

\begin{proof}
We will decompose the $\cN$-$\cN$ bimodule
${}_\cN L^2(\ovl{\cM})_\cN$ as follows.
First we observe that
the linear span of
$\sH_\la^* \cN$, $\la\in P_+$
is weakly dense in $\ovl{\cM}$.
Indeed,
for any $\la\in P$
and any equivariant map
$T\col L(\la)\to \ovl{\cM}$,
it turns out that
$a:=\sum_{\mu,i}V_{\mu_i}^\la T(\xi_{\mu_i}^\la)^*$
belongs to $\ovl{\cM}^\bal$.
It follows that
$T(\xi_{\mu_i}^\la)^*=(V_{\mu_i}^\la)^*a\in\sH_\la^*\cN$.
Since the linear span of
$T(\xi_{\mu_i}^\la)^*$'s for $T$ and $\la,\mu,i$
is weakly dense in $\ovl{\cM}$,
we are done.

Next recall the $\al$-invariant state $\vph$
on $\cM$.
Take a faithful normal state $\ps$ on $B(\ell^2)$
and put $\ovph:=\ps\oti\vph$.
Then $\ovph$ is an $\bal$-invariant state
on $\ovl{\cM}$.
For $\la\in P_+$,
we let $e_N\col L^2(\cM)\ra \ovl{\cN 1_\ovph}$
be the Jones projection.
Then from (\ref{eq:orthogonal}),
$z_\la:=\sum_{\mu,i}(W_{\mu_i}^\la)^*e_N W_{\mu_i}^\la$
is a projection onto
the subspace $X_\la:=\sH_\la^*\ovl{\cN1_\ovph}$.
Since each
$(W_{\mu_i}^\la)^*e_N W_{\mu_i}^\la$
belongs to $\cN'\cap J_\ovph\cN' J_\ovph$,
the subspace $(W_{\mu_i}^\la)^* \ovl{\cN1_\ovph}$
is an $\cN$-$\cN$-bimodule.
Thus we have the following decomposition
as $\cN$-$\cN$-bimodules:
\begin{equation}
\label{eq:decomp-bimodule}
{}_\cN L^2(\cM)_\cN
=
\bigoplus_{\la\in P_+}
\bigoplus_{\mu\in \Wt(\la),\, i\in I_\mu^\la}
(W_{\mu_i}^\la)^* \ovl{\cN1_\ovph}.
\end{equation}

Let us consider the map
$\cN1_\ovph\ni x1_\ovph\mapsto (W_{\mu_i}^\la)^* x1_\ovph$.
Again by (\ref{eq:orthogonal}),
it turns out that this map extends to
the unitary map $U$ from $\ovl{\cN1_\ovph}$
onto $(W_{\mu_i}^\la)^* \ovl{\cN1_\ovph}$.
Then for $a,b\in\cN$ and $\xi\in\ovl{\cN1_\ovph}$,
we have
\begin{align*}
U(\si_\la(a)\xi b)
&=
(W_{\mu_i}^\la)^*\cdot(\si_\la(a)\xi b)
\\
&=
(W_{\mu_i}^\la)^*\si_\la(a)\xi b
\\
&=
a((W_{\mu_i}^\la)^*\xi) b
=
a(U\xi)b.
\end{align*}
Hence as $\cN$-$\cN$-bimodules,
$(W_{\mu_i}^\la)^* \ovl{\cN1_\ovph}$
and
${}_{\cN\,\si_\la}L^2(\cN)_\cN$
are isomorphic.
Then the statement follows
from (\ref{eq:decomp-bimodule}).
\end{proof}

By (\ref{eq:decomp-sigma}) and the previous lemma,
we obtain
\[
[\ga_\cN^{\ovl{\cM}}|_\cN]
=
\bigoplus_{\mu\in P}
\infty[\th_\mu].
\]
Since $[\ga_\cN^{\ovl{\cR}}|_\cN]$
is contained in
$[\ga_\cN^{\ovl{\cM}}|_\cN]$,
$[\ga_\cN^{\ovl{\cR}}|_\cN]$ is a direct sum of
multiples of $[\th_\mu]$'s.
Now we know $\cN\subs \ovl{\cR}$ comes from a minimal action
of a compact quantum group.
Since every $\th_\mu$ is an automorphism,
each irreducible representation of the compact quantum group
is one-dimensional.
Thanks to \cite[p.49]{ILP} or \cite[Lemma 3.5]{T-Gal},
we have
\[
[\ga_\cN^{\ovl{\cR}}|_\cN]
=
\bigoplus_{\mu\in S}
[\th_\mu]
\quad
\mbox{for some }
S\subs P.
\]
However, each $u_\mu$ is actually an element of $\ovl{\cR}$
which implements $\th_\mu$,
$[\th_\mu]$ must be contained in $[\ga_\cN^{\ovl{\cR}}|_\cN]$.
Thus we obtain $S=P$, that is,
\[
[\ga_\cN^{\ovl{\cR}}|_\cN]
=
\bigoplus_{\mu\in P}
[\th_\mu].
\]
Then by \cite[Theorem 3.9]{ILP},
it turns out that $\ovl{\cR}$ is generated by
$\cN=\ovl{\cM}^\bal$ and $u_\mu$, $\mu\in P$.
For $\mu\in P$,
$E_\bal(u_\mu)$ is an element
in $(\id,\th_\mu)$
The outerness of $\th$
implies that $E_\bal(u_\mu)=0$ for $\mu\neq0$.
Hence we obtain the following result.

\begin{thm}
\label{thm:depth2-torus}
The inclusion $\ovl{\cM}^\bal\subs\ovl{\cR}$
is isomorphic to
$\ovl{\cM}^\bal\subs \ovl{\cM}^\bal\rti_\th \widehat{T}$,
where $\widehat{T}=P$ as usual.
\end{thm}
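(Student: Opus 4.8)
The plan is to package the structural information accumulated above into a crossed‐product picture. Write $\cN:=\ovl{\cM}^\bal$. From (\ref{eq:ula}) and the discussion preceding the statement we have an outer action $\th$ of $P=\widehat{T}$ on $\cN$, unitaries $u_\mu\in\ovl{\cR}$ with $\th_\mu=\Ad u_\mu$, $u_\mu u_\nu=u_{\mu+\nu}$, $u_\mu^*=u_{-\mu}$ and $E_\bal(u_\mu)=\de_{\mu,0}$, and — by \cite[Theorem 3.9]{ILP} together with the sector identity $[\ga_\cN^{\ovl{\cR}}|_\cN]=\bigoplus_{\mu\in P}[\th_\mu]$ — the von Neumann algebra $\ovl{\cR}$ is generated by $\cN$ and $\{u_\mu\}_{\mu\in P}$. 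It remains to promote these data to an isomorphism of inclusions.

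First I would form the von Neumann algebraic crossed product $\cN\rti_\th\widehat{T}$, with canonical implementing unitaries $\{\la_\mu\}_{\mu\in P}$ (so $\la_\mu x\la_\mu^*=\th_\mu(x)$ and $\la_\mu\la_\nu=\la_{\mu+\nu}$), and define a normal unital $*$-homomorphism $\Phi\col\cN\rti_\th\widehat{T}\ra\ovl{\cR}$ by $\Phi|_\cN=\id$ and $\Phi(\la_\mu)=u_\mu$. Since the $u_\mu$ obey exactly the same covariance and group relations, the universal property of the crossed product makes $\Phi$ well defined, and it is surjective by the generation statement above.

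Second, for injectivity I would compare conditional expectations. On $\cN\rti_\th\widehat{T}$ there is the canonical faithful normal conditional expectation $E_0$ onto $\cN$ with $E_0(x\la_\mu)=\de_{\mu,0}x$, while $E_\bal$ restricts to a conditional expectation $\ovl{\cR}\ra\cN$. Using $E_\bal(u_\mu)=\de_{\mu,0}$ one has $E_\bal(\Phi(x\la_\mu))=\de_{\mu,0}x=\Phi(E_0(x\la_\mu))$ for $x\in\cN$ and $\mu\in P$, so $E_\bal\circ\Phi=\Phi\circ E_0$ on the weakly dense linear span of the $x\la_\mu$, hence everywhere by normality. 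If $\Phi(y)=0$, then $\Phi(E_0(y^*y))=E_\bal(\Phi(y^*y))=0$; but $E_0(y^*y)\in\cN$ and $\Phi|_\cN=\id$, so $E_0(y^*y)=0$, and faithfulness of $E_0$ forces $y=0$. Thus $\Phi$ is an isomorphism carrying $\cN$ onto $\ovl{\cM}^\bal$, which is exactly the claimed identification of inclusions.

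At this final step I expect no serious obstacle: the genuine work — identifying $[\ga_\cN^{\ovl{\cM}}|_\cN]$ and hence $[\ga_\cN^{\ovl{\cR}}|_\cN]$ as $\bigoplus_{\mu\in P}[\th_\mu]$, trivializing the $2$-cocycle $c$ so that $\th$ is an honest action of $P$, and locating the implementing unitaries $u_\mu$ inside $\ovl{\cR}$ rather than merely in $\ovl{\cM}$ — has already been carried out in the preceding lemmas. The only point needing a little care is that the natural map $\Phi$ is injective and not just surjective; comparing it with the conditional expectations $E_0$ and $E_\bal$, rather than arguing by a dimension or sector count alone, is the clean way to secure this.
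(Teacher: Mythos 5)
Your proposal is correct and follows essentially the same route as the paper: the paper's own proof consists of the preceding lemmas (the sector decomposition $[\ga_\cN^{\ovl{\cR}}|_\cN]=\bigoplus_{\mu\in P}[\th_\mu]$, the generation of $\ovl{\cR}$ by $\cN$ and the $u_\mu$ via \cite[Theorem 3.9]{ILP}, and $E_\bal(u_\mu)=0$ for $\mu\neq0$), with the final crossed-product identification left implicit, which is exactly the step you spell out. The only caveat is that a von Neumann algebraic crossed product has no universal property, so the well-definedness and normality of $\Phi$ cannot be obtained that way; instead they follow from the same expectation-intertwining relation $E_\bal\circ\Phi=\Phi\circ E_0$ you use for injectivity, applied via the GNS construction for $\vph\circ E_0$ and $\vph\circ E_\bal$ to produce $\Phi$ as a spatial isomorphism in the first place.
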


\begin{rem}
Recall the unitary $U$ introduced
in Lemma \ref{lem:U}.
Let $\Ga(x):=(\pi\oti\id)(U^*)\al(x)(\pi\oti\id)(U)$
for $x\in\cM$.
Then $(\Ga\oti\id)\circ\Ga=(\id\oti\de_\Om)\circ\Ga$,
that is,
$\Ga$ is an action of $G_{q,\Om}$ on $\cR$.
However, $\Ga$ is not faithful.
Indeed,
$\cR^\Ga=\cM^\al$
and $\Ga(u_\la)=u_\la\oti \nu_\la$ for $\la\in P$,
where $\nu_\la$ is a group-like unitary
of $G_{q,\Om}$
such that
$U^*w_\la U=1\oti \nu_\la$.
Hence $\Ga$ is nothing but the dual action
of $\th$.
\end{rem}

\subsection{Induced actions}
Let us introduce the action
$\be$ on $\ovl{\cR}$,
that is, 
\[
\be_t(u_\mu)
=\langle t,\mu\rangle u_\mu
\quad
\mbox{for all }
t\in T,\ \mu\in P.
\]
By definition, $\be_t=\hat{\th}_{t^{-1}}$,
where $\hat{\th}$ denotes
the dual action of $\th$.
Then $\be$ extends to $\ovl{\cM}$
by putting $\be=\id$ on $\ovl{\cQ}$.
Then $\ovph\circ\be_t=\ovph$ for all $t\in T$
since $E_\bal(u_\mu)=0$ if $\mu\neq0$.
We will show that
$W^*(u_\mu\mid\mu\in P)\vee \cQ$
is naturally isomorphic to $\lG$.

\begin{lem}
There exists a von Neumann algebra isomorphism
$\pi\col\lG\ra
W^*(u_\mu \mid\mu\in P)\vee \cQ$
such that
\begin{itemize}
\item
$\bal\circ\pi=(\pi\oti\id)\circ\de$;

\item
$\be_{t}\circ\pi=\pi\circ\ga_t$
for all $t\in T$;

\item
$\pi(v_\mu)=u_{\mu}$
for $\mu\in P$;

\item
$\pi(\lTG)=\cQ$.
\end{itemize}
\end{lem}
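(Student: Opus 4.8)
The plan is to take the $G_q$-equivariant isomorphism $\pi\col\lTG\to\ovl{\cQ}$ fixed before the statement and extend it to $\lG$ by declaring $\pi(v_\mu):=u_\mu$, then verify the four listed properties on a generating set. By Theorem~\ref{thm:factor} and (\ref{eq:vla}) we have $\lG=Z(\lG)\vee\lTG$ with $Z(\lG)=\ovl{\spa}^{\rm w}\{v_\mu\mid\mu\in P\}$, so $\cA:=\spa\{v_\mu x\mid\mu\in P,\ x\in\lTG\}$ is a weakly dense unital $*$-subalgebra of $\lG$; it is a $*$-algebra because $v_\mu^*=v_{-\mu}$, $v_\mu v_\la=v_{\mu+\la}$, and each $v_\mu$ is central. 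On $\cA$ I would put $\pi(v_\mu x):=u_\mu\,\pi(x)$ for $x\in\lTG$, extended linearly.

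The first task is to check this is well defined and a $*$-homomorphism. Since $\lTG=\lG^\ga$ and $\ga_t(v_\mu)=\langle t,\mu\rangle v_\mu$, each summand $v_\mu x$ with $x\in\lTG$ lies in the $\mu$-spectral subspace of the compact abelian action $\ga$; hence if $\sum_j v_{\mu_j}x_j=0$, then grouping the terms with a common value of $\mu_j$ and using linear independence of distinct $\ga$-isotypic components together with unitarity of $v_\mu$ gives $\sum_{\mu_j=\mu}x_j=0$ for every $\mu$, which annihilates the matching block on the right-hand side. Multiplicativity and $*$-preservation on $\cA$ then use only the group law and centrality of the $v_\mu$, the identities $u_\mu^*=u_{-\mu}$ and $u_\mu u_\la=u_{\mu+\la}$ from (\ref{eq:ula}), and the fact that $u_\mu\in\ovl{\cR}=\ovl{\cQ}'\cap\ovl{\cM}$ commutes with $\pi(\lTG)=\ovl{\cQ}$.

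Next I would upgrade $\pi$ to a normal isomorphism by comparing invariant states. The Haar state $h$ on $\lG$ is faithful, normal, $\de$-invariant and $\ga$-invariant; since the $G_q$-action on $\lTG$ is ergodic, $h|_{\lTG}$ is its unique invariant state, and the equivariant isomorphism $\pi\col\lTG\to\ovl{\cQ}$ carries it onto the unique $\bal$-invariant state on $\ovl{\cQ}$, which is $\ovph|_{\ovl{\cQ}}$ for $\ovph=\ps\oti\vph$. As $\be\equiv\id$ on $\ovl{\cQ}$ and $\be_t(u_\mu)=\langle t,\mu\rangle u_\mu$, the $\bal$-invariant state $\ovph$ is also $\be$-invariant. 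For $x=v_\mu y$ with $\mu\neq0$, invariance of $h$ under $\ga$ (resp.\ of $\ovph$ under $\be$) forces $h(v_\mu y)=0=\ovph(u_\mu\pi(y))$, while for $\mu=0$ both sides equal the invariant state at $y$; hence $h=\ovph\circ\pi$ on $\cA$. Since $h$ and $\ovph$ are faithful and normal, a standard GNS argument --- realizing $W^*(u_\mu\mid\mu\in P)\vee\ovl{\cQ}$ on its own standard form with respect to the restriction of $\ovph$ --- extends $\pi$ to a normal $*$-isomorphism of $\lG$ onto $W^*(u_\mu\mid\mu\in P)\vee\ovl{\cQ}$, and the relations $\pi(v_\mu)=u_\mu$ and $\pi(\lTG)=\ovl{\cQ}$ are part of the construction.

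Finally, the two intertwining identities are verified on the generators of $\cA$ and extended by normality. On $\lTG$ we have $\bal\circ\pi=(\pi\oti\id)\circ\de$ by $G_q$-equivariance, and $\be_t\circ\pi=\pi=\pi\circ\ga_t$ there because $\be\equiv\id$ on $\ovl{\cQ}$ and $\ga_t\equiv\id$ on $\lTG=\lG^\ga$. On $v_\mu$: $\be_t(\pi(v_\mu))=\be_t(u_\mu)=\langle t,\mu\rangle u_\mu=\pi(\ga_t(v_\mu))$; and from $\de(v_\mu)=(v_\mu\oti1)w_\mu$ with $w_\mu\in\lTG\oti\lG$, together with $(\pi\oti\id)(w_\mu)=w_\mu^o$ and (\ref{eq:ula}), one gets $(\pi\oti\id)(\de(v_\mu))=(u_\mu\oti1)w_\mu^o=\bal(u_\mu)=\bal(\pi(v_\mu))$. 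I expect the one genuinely delicate step to be the passage from the purely algebraic map on $\cA$ to a normal isomorphism onto $W^*(u_\mu\mid\mu\in P)\vee\ovl{\cQ}$ --- in particular getting normality and surjectivity right --- for which the state identity $h=\ovph\circ\pi$ and the care about which standard form one works in are the decisive points; everything else is bookkeeping with relations already in hand.
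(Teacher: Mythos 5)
Your proposal is correct and follows essentially the same route as the paper: the paper directly defines the unitary $U(v_\mu a 1_h)=u_\mu\pi(a)1_{\ovph}$ between the GNS spaces and sets $\pi=\Ad U$, which is precisely the GNS implementation of your state-preserving $*$-homomorphism on $\cA$. Your checks of well-definedness via the $\ga$-isotypic decomposition and of the identity $h=\ovph\circ\pi$ are exactly the details the paper leaves implicit in asserting that $U$ is a well-defined unitary.
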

\begin{proof}
Let $\pi\col \lTG\ra\cQ$ be a $G_q$-equivariant
isomorphism as before.
Let $w_\mu,w_\mu^o$
be the invariant cocycles defined in
(\ref{eq:cocycle}) and (\ref{eq:wlao}).
They are satisfying the following equalities:
\[
\de(v_\mu)=(v_\mu\oti1)w_\mu,
\quad
\al(u_\mu)=(u_\mu\oti1)w_\mu^o
\quad
\mbox{for }
\mu\in P.
\]

Put $\cP:=W^*(u_\mu\mid\mu\in P)\vee\cQ$.
Let us introduce a unitary map
$U\col L^2(G_q)\ra L^2(\cP)$
such that
$U(v_\mu a 1_h)=u_\mu \pi(a) 1_\ovph$
for $\mu\in P$ and $a\in\lTG$.
Then we have
$Uv_\mu U^*=u_\mu$
and
$UaU^*=\pi(a)$ for $\mu\in P$ and $a\in\lTG$.
The map $\pi$ extends to a map,
which we also denote by $\pi$,
from $\lG$ into $\cM$.
The $G_q$-equivariance of $\pi$
is verified as
\begin{align*}
\bal(\Ad U(v_\mu))
&=
\bal(u_\mu)
=
(u_\mu\oti1)w_{\mu}^o
\\
&=
(u_\mu\oti1)(\pi\oti\id)(w_{\mu})
=
(\Ad U\oti\id)
((v_{\mu}\oti1)w_{\mu})
\\
&=
(\Ad U\oti\id)(\de(v_\mu)).
\end{align*}
\end{proof}

\begin{rem}
It turns out from the previous lemma
that $\al$ is semidual.
Hence there exists an action $\si$
of $\widehat{G}_q$
on $\cN=B(\ell^2)\oti\cM^\al$
such that
$\ovl{\cM}=\cN\rti_\si\widehat{G}_q$.
\end{rem}

Recall the restriction of an action
by a quantum subgroup
(see Section \ref{subsect:quantum subgrp}).
In the following lemma,
we will show that
the minimal action $\be$
actually comes from the restriction of $\al$
by the maximal torus $T$
though it seems not so clear at first.

Let $\al_T$ be the restriction of $\al$ on $T$.
We denote by $\al_t$ the restriction of $\al$ on $t\in T$,
that is, $\al_t:=(\id\oti\ev_t)\circ\al_T$ for $t\in T$.
Let $w_0t$ be the element
satisfying
$\langle w_0t,\mu\rangle=\langle t,w_0\mu\rangle$
for all $\mu\in P$.

\begin{lem}
\label{lem:restriction}
The minimal action $\be_t$ of $T$ on $\cR$
is given by $\al_{w_0 t}$ on $\cR$.
\end{lem}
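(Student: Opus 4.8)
The plan is to prove that the two automorphisms $\bal_{w_0 t}:=\id_{B(\ell^2)}\oti\al_{w_0 t}$ and $\be_t$ agree on $\ovl{\cR}$, and then to descend this to $\cR$. First I would check that $\bal_{w_0 t}$ really is an automorphism of $\ovl{\cR}$ which is the identity on $\ovl{\cM}^{\bal}$: the map $\al_{w_0 t}=(\id\oti\ev_{w_0 t}\circ r_T)\circ\al$ is an automorphism of $\cM$; since $\cQ$ carries a $G_q$-action through its equivariant isomorphism with $\lTG$, $\al_{w_0 t}$ preserves $\cQ$, hence $\cR=\cQ'\cap\cM$, hence $\bal_{w_0 t}$ preserves $\ovl{\cR}=\ovl{\cQ}'\cap\ovl{\cM}$; and $\al(x)=x\oti1$ for $x\in\cM^\al$ forces $\al_{w_0 t}(x)=x$, so $\bal_{w_0 t}$ is trivial on $\ovl{\cM}^{\bal}=B(\ell^2)\oti\cM^\al$. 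The action $\be_t$ is also trivial on $\ovl{\cM}^{\bal}$, being (up to an inverse) the dual action on $\ovl{\cR}\cong\ovl{\cM}^{\bal}\rti_\th\widehat{T}$.

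By Theorem~\ref{thm:depth2-torus}, $\ovl{\cR}$ is generated by $\ovl{\cM}^{\bal}$ together with the unitaries $u_\mu$ ($\mu\in P$), so it suffices to verify that $\bal_{w_0 t}(u_\mu)=\langle t,\mu\rangle u_\mu$, which is the value of $\be_t$ on $u_\mu$. For this I would use the isomorphism $\pi\col\lG\to W^*(u_\mu\mid\mu\in P)\vee\cQ$ from the preceding lemma, which satisfies $\bal\circ\pi=(\pi\oti\id)\circ\de$ and $\pi(v_\mu)=u_\mu$. Applying $\id_{\ovl{\cM}}\oti(\ev_{w_0 t}\circ r_T)$ to the intertwining relation gives $\bal_{w_0 t}\circ\pi=\pi\circ\rho_{w_0 t}$, where $\rho_{t'}:=(\id\oti\ev_{t'}\circ r_T)\circ\de$ is the right translation action of $T$ on $\lG$ at the point $t'$. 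Thus $\bal_{w_0 t}(u_\mu)=\pi(\rho_{w_0 t}(v_\mu))$, and the whole statement is reduced to proving $\rho_{w_0 t}(v_\mu)=\langle t,\mu\rangle v_\mu$ in $\lG$.

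To establish this, I would compute $\rho_{t'}$ on $a_\La=C_{\La,w_0\La}^\La$ for $\La\in P_+$. From $\de(a_\La)=\sum_{\zeta\in\Wt(\La),\,k}C_{\La,\zeta_k}^\La\oti C_{\zeta_k,w_0\La}^\La$, the restriction formula (\ref{eq:rest}), and $\dim L(\La)_{w_0\La}=1$, only the summand with $\zeta_k$ of weight $w_0\La$ survives, whence $\rho_{t'}(a_\La)=\langle t',w_0\La\rangle a_\La=\langle w_0 t',\La\rangle a_\La$. In particular $\rho_{t'}$ fixes $|a_\La|$, and applying it to the polar decomposition $a_\La=v_\La|a_\La|$ gives $(z-1)|a_\La|=0$, where $z:=\langle w_0 t',\La\rangle^{-1}v_\La^*\rho_{t'}(v_\La)$ is a unitary in $Z(\lG)$. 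Since $\rho_{t'}$ commutes with the left action $\ga$ and $\ga_t(v_\La)=\langle t,\La\rangle v_\La$, the element $z$ is $\ga$-invariant, hence a scalar by the central ergodicity of $\ga$ recorded in Theorem~\ref{thm:factor}; together with $z\,s(|a_\La|)=s(|a_\La|)\neq0$ this forces $z=1$. Thus $\rho_{t'}(v_\La)=\langle w_0 t',\La\rangle v_\La$ for every $\La\in P_+$, and multiplicatively $\rho_{t'}(v_\mu)=\langle w_0 t',\mu\rangle v_\mu$ for all $\mu\in P$. Taking $t'=w_0 t$ and using $w_0^2=1$ yields $\rho_{w_0 t}(v_\mu)=\langle t,\mu\rangle v_\mu$, which completes the check on generators; since $\bal_{w_0 t}$ and $\be_t$ are both of the form $\id_{B(\ell^2)}\oti(\cdot)$, the equality $\be_t=\al_{w_0 t}$ on $\cR$ follows.

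The delicate point, and the one I expect to demand the most care, is locating the Weyl element in the argument. Since $r_T$ reads off the highest-weight index of a matrix coefficient while $a_\La$ carries its lowest weight $w_0\La$ in the second slot, the restricted right action twists $v_\La$ by $\langle t',w_0\La\rangle$ rather than $\langle t',\La\rangle$, and this is exactly what forces the substitution $t\mapsto w_0 t$ in the statement. A second, more technical, nuisance is that $|a_\La|$ need not be non-singular when $\La$ is not regular; this is why the final cancellation $z=1$ is routed through central ergodicity of $\ga$ on $Z(\lG)$ rather than through invertibility of $|a_\La|$.
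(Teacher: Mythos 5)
Your proposal is correct and follows essentially the same route as the paper: reduce to the generators $u_\mu$ (both actions being trivial on the fixed point algebra), transfer via the equivariant embedding $\pi$ to the right translation action $\ga^R_t=(\id\oti\ev_t\circ r_T)\circ\de$ on $\lG$, and read off the eigenvalue $\langle t,w_0\mu\rangle$ from the polar decomposition of $a_\La=C^\La_{\La,w_0\La}$. Your central-ergodicity argument pinning down $z=1$ is just an explicit filling-in of the step the paper compresses into ``using the polar decomposition of $C^\La_{\La,w_0\La}$.''
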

\begin{proof}
To see this,
we may assume that $\cM^\al$ is infinite.
Then $\cR$ is generated by $\cM^\al$
and $u_\la$'s as before.

By the above equivariant embedding $\pi$,
$\al_T$ on $\{u_\la\}_\la''\vee\cQ$
is conjugate to the right torus action $\ga^R$
on $\lG$,
where $\ga_t^R:=(\id\oti\ev_t\circ\, r_T)\circ\de$
for $t\in T$.
Using $\pi(v_\la)=u_\la$ and
the polar decomposition of $C_{\La,w_0\La}^\La$
with $\La\in P_+$,
we have
$\al_t(u_\la)=\pi(\ga_t^R(v_\la))
=\langle t,w_0\la\rangle u_\la=\be_{w_0t}(u_\la)$.
\end{proof}

\begin{rem}
\label{rem:unitary-impl}
Let $x\in\R^n$
and $y:=A^{-1}x$, where $A$ denotes the Cartan matrix.
Then we put $t=(t_j)_j$
with $t_j=q_j^{iy_j}$ for $j=1,\dots,n$,
and we get
$(w_0t,\nu)=\prod_j q^{i(w_0\om_j,\nu)x_j}$.
By the commutation relation in the proof of Theorem \ref{thm:factor},
we obtain
\[
\ga_{w_0t}^R=\Ad |a_{\om_1}|^{ix_1}\cdots|a_{\om_n}|^{ix_n}
\quad\mbox{on }
\lTG.
\]
This shows the right action $\ga^R$ on $\lTG$
is implemented by a unitary representation.
\end{rem}

\begin{lem}
The map
\[
\Xi\col
(\ovl{\cM}^{\bal}\oti\C)\vee
W^*(u_\mu\oti v_\mu\mid\mu\in P)\vee
(\C\oti\lTG)
\ra
\ovl{\cM}
\]
with $\Xi((a\oti1)(u_\mu\oti v_\mu)(1\oti b))
=au_\mu\pi(b)$
for $a\in\ovl{\cM}^{\bal}$, $\la\in P$
and $b\in\cQ$
is a well-defined $G_q$-equivariant isomorphism.
\end{lem}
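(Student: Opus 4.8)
The plan is to recognise the domain of $\Xi$, call it $\mathcal{D}:=(\ovl{\cM}^{\bal}\oti\C)\vee W^*(u_\mu\oti v_\mu\mid\mu\in P)\vee(\C\oti\lTG)$, as an internal von Neumann tensor product of a copy of $\ovl{\cR}$ and a copy of $\lTG$, and to build $\Xi$ out of two isomorphisms already at hand. Throughout I identify $\ovl{\cM}^{\bal}\oti\C$ with $\ovl{\cM}^{\bal}$ and $\C\oti\lTG$ with $\lTG$, I write $U_\mu:=u_\mu\oti v_\mu$ for $\mu\in P$, and $\pi\col\lTG\to\ovl{\cQ}$ denotes the $G_q$-equivariant isomorphism (unique by Corollary \ref{cor:equiv}, as in the lemma above).

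First I would split the domain. Since each $v_\mu$ is central in $\lG$ by $(\ref{eq:vla})$, the subalgebra $\C\oti\lTG$ commutes with $\ovl{\cM}^{\bal}\oti\C$ and with every $U_\mu$, hence with $\mathcal{D}_0:=(\ovl{\cM}^{\bal}\oti\C)\vee W^*(U_\mu\mid\mu\in P)$, which is contained in $\ovl{\cM}\oti Z(\lG)$. By Theorem \ref{thm:factor} the algebra $\lTG$ is a factor, $Z(\lTG)=\C$, $\lG=Z(\lG)\vee\lTG$ and $Z(\lG)\cap\lTG=\C$, so $\lG\cong Z(\lG)\oti\lTG$; consequently $\mathcal{D}=\mathcal{D}_0\vee(\C\oti\lTG)\cong\mathcal{D}_0\oti\lTG$ as a genuine tensor product, with $\mathcal{D}_0$ occupying the first two legs of $\ovl{\cM}\oti Z(\lG)\oti\lTG$ and $\lTG$ the third.

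Next I would identify $\mathcal{D}_0$ with $\ovl{\cR}$. By $(\ref{eq:vla})$, $(\ref{eq:cocycle})$ and $(\ref{eq:ula})$, the assignment $\mu\mapsto U_\mu$ is a unitary representation of $P$ with $U_0=1$, $U_\mu U_\nu=U_{\mu+\nu}$ and $\Ad U_\mu=\th_\mu$ on $\ovl{\cM}^{\bal}$. Letting $\tau$ be the normalised trace on $Z(\lG)\cong L^\infty(T)$ with $\tau(v_\mu)=\de_{\mu,0}$ (the Haar integral on $T$), the map $E_0:=(\id\oti\tau)|_{\mathcal{D}_0}$ is a faithful normal conditional expectation of $\mathcal{D}_0$ onto $\ovl{\cM}^{\bal}$ with $E_0(xU_\mu)=\de_{\mu,0}x$; faithfulness follows from $\ovph\circ E_0=(\ovph\oti\tau)|_{\mathcal{D}_0}$. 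Since $\th$ is an outer action of $P$, this data presents $\mathcal{D}_0$ as $\ovl{\cM}^{\bal}\rti_\th P$. On the other hand Theorem \ref{thm:depth2-torus}, together with the identity $E_\bal(u_\mu)=\de_{\mu,0}$ recorded just before it, presents $\ovl{\cR}$ as the same crossed product via the unitaries $u_\mu$. Matching the two presentations gives a unique normal $*$-isomorphism $\Xi_0\col\mathcal{D}_0\to\ovl{\cR}$ with $\Xi_0|_{\ovl{\cM}^{\bal}}=\id$ and $\Xi_0(U_\mu)=u_\mu$. The main obstacle is exactly this identification: one has to see that $\mathcal{D}_0$ and $\ovl{\cR}$ are \emph{the same} crossed product, which rests on the outerness of $\th$ (so the crossed product is rigid) and on the two conditional expectations $E_0$, $E_\bal$ detecting the $P$-grading; the rest is bookkeeping.

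Finally I would assemble $\Xi$ and verify equivariance. Since $\ovl{\cM}=\ovl{\cR}\vee\ovl{\cQ}\cong\ovl{\cR}\oti\ovl{\cQ}$ as observed earlier, set $\Xi:=\Xi_0\oti\pi\col\mathcal{D}=\mathcal{D}_0\oti\lTG\to\ovl{\cR}\oti\ovl{\cQ}=\ovl{\cM}$, a normal $*$-isomorphism. For $a\in\ovl{\cM}^{\bal}$, $\mu\in P$ and $b\in\lTG$ one has $(a\oti1)(u_\mu\oti v_\mu)(1\oti b)=(aU_\mu)\oti b$ in $\mathcal{D}_0\oti\lTG$, so $\Xi$ sends it to $\Xi_0(aU_\mu)\pi(b)=au_\mu\pi(b)$, the asserted formula; as such elements span a weakly dense $*$-subalgebra of $\mathcal{D}$, this shows $\Xi$ is well defined and is the only map obeying the formula. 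For equivariance I would equip $\mathcal{D}$ with $\tilde\bal:=(\id_{\ovl{\cM}}\oti\de)|_{\mathcal{D}}$, which is an action: $\mathcal{D}$ is $\tilde\bal$-invariant since $\tilde\bal(a\oti1)=a\oti1\oti1$, $\tilde\bal(U_\mu)=(U_\mu\oti1)(1_{\ovl{\cM}}\oti w_\mu)$ with $w_\mu\in\lTG\oti\lG$ by $(\ref{eq:cocycle})$, and $\tilde\bal(1\oti b)=1\oti\de(b)$ with $\de(b)\in\lTG\oti\lG$. Then $\bal\circ\Xi=(\Xi\oti\id)\circ\tilde\bal$ is checked on these generators: on $a\in\ovl{\cM}^{\bal}$ both sides equal $a\oti1$; on $U_\mu$ the right-hand side equals $(u_\mu\oti1)(\pi\oti\id)(w_\mu)=(u_\mu\oti1)w_\mu^o=\bal(u_\mu)$ by $(\ref{eq:wlao})$ and $(\ref{eq:ula})$; and on $1\oti b$ both sides equal $(\pi\oti\id)(\de(b))=\bal(\pi(b))$, using $\bal\circ\pi=(\pi\oti\id)\circ\de$ from the lemma above. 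This gives the claim.
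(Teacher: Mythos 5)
Your proof is correct and follows essentially the same route as the paper's: the single unitary $U\col L^2(\cL)\to L^2(\ovl{\cM})$ that the paper writes down is exactly the tensor product of your GNS unitary implementing $\Xi_0$ (whose well-definedness rests on the same identity $E_{\bal}(u_\mu)=0$ for $\mu\neq 0$, i.e.\ $\ovl{\vph}(au_\mu)=0$, together with the splitting $\ovl{\vph}=\ovl{\vph}|_{\ovl{\cR}}\oti\vph|_{\cQ}$ and Theorem \ref{thm:depth2-torus} for surjectivity) with the unitary implementing $\pi$, and your generator-by-generator equivariance check is the one in the paper. Your explicit decomposition $\mathcal{D}\cong\mathcal{D}_0\oti\lTG$ and the appeal to uniqueness of the crossed product simply unpack what the paper compresses into one line.
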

\begin{proof}
Let
$\cL:=(\ovl{\cM}^{\bal}\oti\C)\vee
W^*(u_\mu\oti v_\mu\mid\la\in P)\vee
(\C\oti\lTG)$.
Then $\cL\subs \ovl{\cR}\oti \lG$.

\begin{clm}
The following map $U\col L^2(\cL)\ra L^2(\ovl{\cM})$
is a well-defined unitary:
\[
U((a\oti1)(u_\mu\oti v_\mu)(1\oti b)
(1_{\ovl{\vph}}\oti1_h))
:=au_\mu\pi(b)1_{\ovl{\vph}}
\]
for $a\in\ovl{\cM}^{\bal}$,
$\mu\in P$
and $b\in \lTG$,
where $\pi$ is the one defined in the previous lemma.
\end{clm}
\begin{proof}[Proof of Claim]
Recall that $\ovl{\cM}\cong \ovl{\cR}\oti\cQ$
and $\ovl{\vph}$ is splitted to
$\ovl{\vph}|_{\ovl{\cR}}\oti\vph_{\cQ}$.
Then the well-definedness
follows from $\ovl{\vph}(au_\mu)=0$
for $a\in\ovl{\cM}^{\bal}$
and a non-zero $\mu\in P$.
\end{proof}
Using this map,
we obtain an isomorphism $\Xi\col\cL\ra \ovl{\cM}$
as in the statement.
We will check the $G_q$-equivariance.
Let $a\in\ovl{\cM}^{\bal}$,
$\mu\in P$ and $b\in \lTG$.
Then $\Xi(a)=a$ and $\Xi(1\oti b)=\pi(b)$.
Next,
\begin{align*}
(\Xi\oti\id_{\lG})
\left((\id_{\ovl{\cR}}\oti\de)(u_\mu\oti v_\mu)\right)
&=
(\Xi\oti\id_{\lG})
\left((u_\mu\oti v_\mu\oti1)
(1\oti w_\mu)
\right)
\\
&=
(u_\mu\oti1)(\pi\oti\id)(w_\mu)
\\
&=
(u_\mu\oti1)w_\mu^o
=\bal(u_\mu)
\\
&=
\bal(\Xi(u_\mu\oti v_\mu)).
\end{align*}
Therefore, $\Xi$ is $G_q$-equivariant.
\end{proof}

We will recall the notion of the induction of actions.

\begin{defn}
Let $\bH$ be a quantum subgroup of $\bG$
and $\Ga\col \cA\ra\cA\oti L^\infty(\bH)$
an action of $\bH$ on a von Neumann algebra $\cA$.
Let $\ga_\bH:=(r_\bH\oti\id)\circ\de$ be the left action
of $\bH$ on $L^\infty(\bG)$.
Set
\[
\Ind_\bH^{\bG}\cA
:=
\cA\oti_\bH L^\infty(\bG)
=
\{x\in\cA\oti L^\infty(\bG)
\mid (\Ga\oti\id)(x)=(\id\oti \ga_\bH)(x) \}.
\]
Then the restriction of $\id\oti\de$ on $\Ind_\bH^{\bG}\cA$,
which we will denote by $\Ind_\bH^{\bG}\Ga$,
gives an action of $\bG$,
and we will call it the \emph{induction} of $\Ga$ from $\bH$ to $\bG$.
\end{defn}

Note that the fixed point algebra of $\Ind_\bH^{\bG}\Ga$
is equal to $\cA^\Ga$.
Now we will prove the following main result of this paper.

\begin{thm}
\label{thm:induction}
A faithful product type action
of $G_q$ is induced from a minimal
action of $T$ on a type III factor.
Moreover, such minimal action is unique in the following sense:
If there exists a minimal action $\chi$ of $T$ on a factor $\cN$
such that $\Ind_T^{G_q}\cN$ is $G_q$-equivariantly
isomorphic to $\cM$,
then there exist a $*$-isomorphism $\zeta$
from $\cR$ onto $\cN$
and a topological group isomorphism
$f$ on $T$ such that
$\chi_t=\zeta\circ\be_{f(t)}\circ\zeta^{-1}$.
\end{thm}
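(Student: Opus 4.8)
The plan is to feed the $G_q$-equivariant isomorphism $\Xi$ of the preceding lemma through the cocycle description of Theorem \ref{thm:descript-coc} and the factoriality results of \S3, realise $\ovl{\cM}$ as an induced algebra over the torus, and then compress away the auxiliary $B(\ell^2)$.

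First I would identify the von Neumann algebra $\cL$ of the last lemma with $\Ind_T^{G_q}\ovl{\cR}$, where $\ovl{\cR}=B(\ell^2)\oti\cR$ carries the minimal $T$-action $\be$. One inclusion is immediate: on each generating family $\ovl{\cM}^{\bal}\oti\C$, $W^*(u_\mu\oti v_\mu\mid\mu\in P)$, $\C\oti\lTG$ the defining relation $(\be\oti\id)(x)=(\id\oti\ga)(x)$ holds, since $\be$ fixes $\ovl{\cM}^{\bal}$, $\ga$ fixes $\lTG$, and $\be_t(u_\mu)=\langle t,\mu\rangle u_\mu$ while $\ga_t(v_\mu)=\langle t,\mu\rangle v_\mu$; hence $\cL\subseteq\Ind_T^{G_q}\ovl{\cR}$. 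Conversely $\Ind_T^{G_q}\ovl{\cR}$ is the fixed point algebra of the diagonal $T$-action $t\mapsto\be_t\oti\ga_t^{-1}$ on $\ovl{\cR}\oti\lG$, hence the weak closure of $\bigvee_{\mu}(\ovl{\cR}_\mu\oti\lG_\mu)$; by Theorem \ref{thm:depth2-torus} one has $\ovl{\cR}_\mu=\ovl{\cM}^{\bal}u_\mu$, and by Theorem \ref{thm:factor} together with $\lTG=\lG^{\ga}$ one has $\lG_\mu=v_\mu\lTG$, so $\ovl{\cR}_\mu\oti\lG_\mu=(u_\mu\oti v_\mu)(\ovl{\cM}^{\bal}\oti\lTG)\subseteq\cL$. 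Thus $\cL=\Ind_T^{G_q}\ovl{\cR}$. Since the induced algebra of $(B(\ell^2)\oti\cR,\id\oti\be)$ involves only the $\cR$- and $\lG$-legs, it equals $B(\ell^2)\oti\Ind_T^{G_q}\cR$ as a $G_q$-algebra, and $\Xi$ becomes a $G_q$-equivariant isomorphism $\Phi\col B(\ell^2)\oti\Ind_T^{G_q}\cR\to B(\ell^2)\oti\cM=\ovl{\cM}$.

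Next I would note that $\Phi$ is the identity on the common fixed point subalgebra $B(\ell^2)\oti\cM^\al$: by definition $\Xi$ is the identity on $\ovl{\cM}^{\bal}\oti\C$, and under the identification $\cL=B(\ell^2)\oti\Ind_T^{G_q}\cR$ this subalgebra is exactly $B(\ell^2)\oti(\Ind_T^{G_q}\cR)^{G_q}=B(\ell^2)\oti\cM^\al$, matched identically. Hence for a minimal projection $e\in B(\ell^2)$, using $1_{\cM^\al}=1_{\cM}$, we get $\Phi(e\oti1)=e\oti1$, so compressing by $e\oti1$ yields a $G_q$-equivariant isomorphism $\Ind_T^{G_q}\cR\cong\cM$. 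As $\cM\cong\cR\oti\cQ$ with $\cQ\cong\lTG$ of type ${\rm I}_\infty$ and $\cM$ of type ${\rm III}$, $\cR$ is a type ${\rm III}$ factor; this proves the first assertion, the relevant minimal $T$-action being $\be$ on $\cR$. For the uniqueness, let $\chi$ be a minimal action of $T$ on a factor $\cN$ and $\Psi\col\Ind_T^{G_q}\cN\to\cM$ a $G_q$-equivariant isomorphism. Working inside $\cN\oti_T\lG$ and using $\lG^\de=\C$ and the minimality of $\chi$ (so $(\cN^\chi)'\cap\cN=\C$), I would compute $(\Ind_T^{G_q}\cN)^{G_q}=\cN^\chi\oti\C$, then $(\cN^\chi\oti\C)'\cap\Ind_T^{G_q}\cN=\C\oti\lTG$, and — since $\lTG'\cap\lG=Z(\lG)$ (because $Z(\lG)$ is central, $\lTG$ is a factor and $\lG=Z(\lG)\vee\lTG$ with trivial intersection) — that $\cR_1:=(\C\oti\lTG)'\cap\Ind_T^{G_q}\cN=\{\sum_\mu n_\mu\oti v_\mu\mid n_\mu\in\cN_\mu\}$, where $\cN=\bigoplus_\mu^{\rm w}\cN_\mu$ is the $\chi$-spectral decomposition. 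The map $j\col\cN\to\cR_1$ sending $n=\sum_\mu n_\mu$ to $\sum_\mu n_\mu\oti v_\mu$ (namely $\chi$ viewed as a coaction $\cN\to\cN\oti L^\infty(T)$ followed by the Pontryagin isomorphism $L^\infty(T)\cong Z(\lG)$ identifying the character $\langle\cdot,\mu\rangle$ with $v_\mu$) is a normal $*$-isomorphism onto $\cR_1$.

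Being $G_q$-equivariant, $\Psi$ maps $\cN^\chi\oti\C$ onto $\cM^\al$, hence $\C\oti\lTG$ onto $\cQ$ and $\cR_1$ onto $\cR$, and it intertwines the $T$-restricted actions. On $\cR_1$ the restriction to $T$ of $\Ind_T^{G_q}\chi$ acts as $\id_\cN\oti\ga_t^R$, which on $\sum n_\mu\oti v_\mu$ produces $\sum\langle t,w_0\mu\rangle n_\mu\oti v_\mu$ — here $\ga_t^R(v_\mu)=\langle t,w_0\mu\rangle v_\mu$ as established in the proof of Lemma \ref{lem:restriction} — so it equals $j\circ\chi_{w_0t}\circ j^{-1}$; on $\cR$ the restriction to $T$ of $\al$ acts as $\be_{w_0t}$ by Lemma \ref{lem:restriction}. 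Therefore $\zeta:=\Psi\circ j\col\cN\to\cR$ satisfies $\zeta\circ\chi_{w_0t}\circ\zeta^{-1}=\be_{w_0t}$ for all $t\in T$; replacing $w_0t$ by $t$ gives $\chi_t=\zeta^{-1}\circ\be_t\circ\zeta$, which is the assertion with the isomorphism $\zeta^{-1}\col\cR\to\cN$ and $f=\id_T$ (the two occurrences of the longest element $w_0$, one from $\ga^R$ and one from Lemma \ref{lem:restriction}, cancel).

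The step I expect to cause the most trouble is the identification $\cL=\Ind_T^{G_q}\ovl{\cR}$: one must match the $T$-spectral subspaces of $\ovl{\cR}$ and of $\lG$ exactly, which rests on Theorem \ref{thm:depth2-torus} and on the structure of $Z(\lG)$ from Theorem \ref{thm:factor}, and one must check that passing to $\ovl{\cM}=B(\ell^2)\oti\cM$ and back loses nothing — which works precisely because $\Xi$ is the identity on the fixed point algebras and $\cM^\al$ sits unitally inside $\Ind_T^{G_q}\cR$.
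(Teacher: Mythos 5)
Your proposal is correct and follows essentially the same route as the paper: the first half is the paper's computation $\ovl{\cR}\oti_T\lG=(\ovl{\cM}^{\bal}\oti\C)\vee W^*(u_\mu\oti v_\mu\mid\mu\in P)\vee(\C\oti\lTG)$ carried out via matching $T$-spectral subspaces, followed by applying $\Xi$ and stripping off $B(\ell^2)$ using that $\Xi$ is the identity on the fixed-point algebra. Your uniqueness argument is a more explicit version of the paper's (which simply matches the inclusions $\cN^\chi\subs(\C\oti\lTG)'\cap\cP$ and $\cM^\al\subs\cR$ and invokes that any automorphism of $\cR$ fixing $\cM^\al$ pointwise is some $\be_t$); by exploiting the full $G_q$-equivariance of $\Psi$ on $\cR_1$ you legitimately obtain the slightly sharper conclusion $f=\id$, which is consistent with the statement.
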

\begin{proof}
We let $\cA:=W^*(u_\mu\mid\la\in P)\subs\ovl{\cR}$.
Since $\be_t(u_\mu)=\langle t,\mu\rangle u_\mu$
and $\ga_t(v_\mu)=\langle t,\mu\rangle v_\mu$,
we have
\[
\cA\oti_T Z(\lTG)=W^*(u_\mu\oti v_\mu\mid\la\in P).
\]
Therefore,
\begin{align*}
B(\ell^2)\oti\Ind_T^{G_q}\cR
&=\ovl{\cR}\oti_T\lG
=(\ovl{\cM}^{\bal}\vee \cA)\oti_T (Z(\lG)\vee\lTG)
\\
&=
(\ovl{\cM}^{\bal}\oti\C)\vee
W^*(u_\mu\oti v_\mu\mid\la\in P)\vee
(\C\oti\lTG),
\end{align*}
which is isomorphic to $B(\ell^2)\oti\cM$
through $\Xi$,
the map constructed in the previous lemma.
By definition,
$\Xi$ maps the fixed point algebra
$(\ovl{\cR}\oti_T\lG)^{G_q}=\ovl{\cR}^{\ovl{\be}}
=\ovl{\cM}^{\bal}$
onto $\ovl{\cM}^{\bal}$ identically.
Thus we can remove the contribution of $B(\ell^2)$.

Next suppose that
we have a minimal action $\chi$ of $T$ on a factor $\cN$
such that $\cP:=\Ind_T^{G_q}\cN$ is $G_q$-equivariantly
isomorphic to $\cM$.
Then the inclusion
$\cN^\chi\subs (\C\oti\lTG)'\cap \cP$
is isomorphic to
$\cM^\al\subs\cR$.
Thus there exist a $*$-isomorphism $\zeta$
from $\cR$ onto $\cN$
and a topological group isomorphism
$f$ on $T$ such that
$\chi_t=\zeta\circ\be_{f(t)}\circ\zeta^{-1}$
since
every automorphism $\ps$ on $\cR$ which fixes $\cM^\al$
is of the form
$\be_t$ for some $t\in T$.
Indeed, $u_\la^*\ps(u_\la)$ commutes with $\cM^\al$,
and it is a scalar
(see \cite[p.131]{AHKT} for a more general situation).
\end{proof}

\subsection{Classification of product type actions}
As mentioned in Lemma \ref{lem:restriction},
the minimal action $\be$ on $\cR$ comes from the restriction of $\al$
on $T$, which we denote by $\al_T$ as usual.
Let $\al_t:=(\id\oti\ev_t)\circ\al_T$ for $t\in T$.
Readers are referred to \cite{MT,Ocn}
for the notion of \emph{conjugacy} and \emph{cocycle conjugacy}.
We will say that a (quantum) group action is \emph{stable}
when every cocycle is a coboundary.

Recall that $\al_T$ on $\cQ$ is implemented by
a unitary representation (see Remark \ref{rem:unitary-impl}).
Then we have
\begin{align*}
\al_{w_0t}
&\approx \al_{w_0t}|_\cR\oti \al_{w_0t}|_\cQ
= \be_t\oti\al_{w_0t}|_\cQ
\\
&\sim \be_t\oti\id|_\cQ
\\
&\sim \be_t
\quad
\mbox{for }t\in T,
\end{align*}
where we have used the infiniteness of $\cR$
at the last cocycle conjugacy.
The notations $\approx$ and $\sim$ denote
the conjugacy and the cocycle conjugacy, respectively.
We will summarize this observation in the following
(cf. Lemma \ref{lem:restriction}).
For the notion of \emph{invariant approximate innerness},
readers are referred to \cite[Definition 4.5, Lemma 4.7]{MT}.

\begin{thm}
\label{thm:beta-prod}
The minimal action $\be_t$ of the maximal torus
$T$ on $\cR$ is cocycle conjugate to
$\al_{w_0t}$.
In particular,
$\be$ is invariantly approximately inner.
\end{thm}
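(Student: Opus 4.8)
The plan is to make precise the chain of (cocycle) conjugacies displayed immediately before the statement. For the first step I would record a $T$-equivariant tensor splitting of $\cM$. Each automorphism $\al_t$, $t\in T$, fixes $\cM^\al$ pointwise, since $\al(x)=x\oti1$ for $x\in\cM^\al$; hence $\al_t$ preserves both $\cQ=(\cM^\al)'\cap\cM$ and $\cR=\cQ'\cap\cM$. As moreover $\vph=\vph|_{\cR}\oti\vph|_{\cQ}$, the isomorphism $\cM\cong\cR\oti\cQ$ intertwines $\al_t$ with $(\al_t|_{\cR})\oti(\al_t|_{\cQ})$. Combining this with Lemma \ref{lem:restriction}, which identifies $\al_{w_0t}|_{\cR}$ with $\be_t$, we obtain $\al_{w_0t}\approx\be_t\oti(\al_{w_0t}|_{\cQ})$ as actions of $T$.

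For the second step I would remove the $\cQ$-factor. By Remark \ref{rem:unitary-impl} the action $\al_T$ on $\cQ\cong\lTG$ is implemented by a unitary representation of $T$; such an action is exterior equivalent to the trivial action $\id_{\cQ}$, so $\be_t\oti(\al_{w_0t}|_{\cQ})\sim\be_t\oti\id_{\cQ}$. Since $\cQ$ is the type ${\rm I}_\infty$ factor by Theorem \ref{thm:factor} and $\cR$ is properly infinite (it is of type III), $\cR\oti\cQ\cong\cR$, and under this identification $\be_t\oti\id_{\cQ}$ is cocycle conjugate to $\be_t$ --- the absorption of a trivial action on $B(\ell^2)$, of the same kind already used when passing to $\ovl{\cM}=B(\ell^2)\oti\cM$. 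Concatenating the three relations gives $\al_{w_0t}\sim\be_t$, which is the first assertion.

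For the last assertion I would observe that the product type action $\al$, hence its restriction $\al_T$, is invariantly approximately inner. Realizing $\al_T$ as the product type action of $T$ attached to the unitary representation $r_T(v)$ on $H_v$, say $t\mapsto u_t$, the automorphism $\al_t$ is the pointwise limit, as $N\to\infty$, of $\Ad(u_t^{(1)}\oti\cdots\oti u_t^{(N)}\oti1\oti\cdots)$, and every implementing unitary preserves $\vph$ because $\ph$ is $u_t$-invariant; so $\al_T$ is invariantly approximately inner in the sense of \cite[Definition 4.5]{MT}. This property is stable under cocycle conjugacy by \cite[Lemma 4.7]{MT}, and we have just shown $\be\sim\al_{w_0t}$, so $\be$ is invariantly approximately inner as well.

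The only points calling for a little care are the two absorption facts in the second step: the exterior equivalence $\al_{w_0t}|_{\cQ}\sim\id_{\cQ}$, which rests precisely on the unitary implementation recorded in Remark \ref{rem:unitary-impl}, and $\be\oti\id_{B(\ell^2)}\sim\be$; both are standard once $\cQ$ is identified as the type ${\rm I}_\infty$ factor and $\cR$ is known to be infinite. All the genuinely substantial input --- the factoriality of $\lTG$, the equivariant identification of $\cQ$ with $\lTG$, the depth-$2$ analysis, and Lemma \ref{lem:restriction} --- has already been assembled in the preceding subsections, so what remains here is essentially bookkeeping.
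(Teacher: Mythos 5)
Your proposal is correct and follows exactly the paper's argument: the same chain $\al_{w_0t}\approx\be_t\oti(\al_{w_0t}|_{\cQ})\sim\be_t\oti\id_{\cQ}\sim\be_t$, using the tensor splitting $\cM\cong\cR\oti\cQ$, Lemma \ref{lem:restriction}, the unitary implementation from Remark \ref{rem:unitary-impl}, and the infiniteness of $\cR$ for the final absorption. Your explicit justification of invariant approximate innerness (finite-stage implementing unitaries preserving $\vph$, plus stability under cocycle conjugacy via \cite[Lemma 4.7]{MT}) is a correct fleshing-out of what the paper leaves implicit.
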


Note that $\cM$ is the completion of the infinite tensor product
of $B(H)$ by a product state,
$\cM$ is of type III$_\la$ with $0<\la\leq1$.
To compute the type of $\cM^\al$,
the following result is useful.
Note that $\cM^\al$ is not of type I as remarked
in the proof of Lemma \ref{lem:Hilb-embed}.

\begin{cor}
\label{cor:beta-fixed-type}
The following statements hold:
\begin{enumerate}
\item
The fixed point algebra $\cM^{\al_T}$ is not of type III$_0$;

\item 
If $\cM^{\al_T}$ is of type III$_\la$ with $0<\la\leq1$,
then so is $\cM^\al$.
In this case, $\al$ is stable;

\item
If $\cM^{\al_T}$ is of type II,
then so is $\cM^\al$.
\end{enumerate}
\end{cor}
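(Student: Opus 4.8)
The plan is to reduce the three assertions to one structural fact, that $\cM^\al$ and $\cM^{\al_T}$ are stably isomorphic, and then to read off the types by modular theory. Recall from Lemma~\ref{lem:restriction} and Remark~\ref{rem:unitary-impl} that on $\cM=\cR\oti\cQ$ one has $\al_{w_0t}=\be_t\oti\ga_t$, where $\ga_t:=\al_{w_0t}|_\cQ$ is implemented by a genuine unitary representation $t\mapsto U_t$ with $U_t\in\cQ$ (transport $|a_{\om_1}|^{ix_1}\cdots|a_{\om_n}|^{ix_n}$ through the isomorphism $\lTG\to\cQ$). Hence $\cM^{\al_T}$ is the joint fixed-point algebra of the product action $\be\oti\ga$, and the corollary is about comparing this with $\cM^\al=\cR^\be$.

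First I would prove the stable isomorphism $B(\ell^2)\oti\cM^{\al_T}\cong\cM^\al\oti B(\ell^2)$. Pass to $\ovl{\cM}=B(\ell^2)\oti\cM$ and use Theorem~\ref{thm:depth2-torus}: $\ovl{\cR}=\ovl{\cM}^\bal\rti_\th P$ with $\be$ the dual action, so the $\be$-spectral subspace attached to $\mu\in P$ is $\ovl{\cR}_\mu=\ovl{\cM}^\bal u_\mu$ with $u_\mu$ a unitary obeying (\ref{eq:ula}). Writing the eigenprojection decomposition $U_t=\sum_\mu\langle t,\mu\rangle e_\mu$ in $\cQ=B(K)$ and intersecting the joint spectral subspaces of $\be\oti\ga$, one finds that $\ovl{\cM}^{\bal_T}$ (the fixed points of $\bal$ restricted to $T$) is the weak closure of $\bigoplus_{\mu,\nu}\ovl{\cR}_{\nu-\mu}\oti e_\mu B(K)e_\nu$. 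The unitary $\mathcal{V}:=\sum_\mu u_\mu\oti e_\mu\in\ovl{\cM}$ then satisfies $\mathcal{V}^*\bigl(\ovl{\cM}^\bal\oti\cQ\bigr)\mathcal{V}=\ovl{\cM}^{\bal_T}$ — a direct computation from $u_\mu^*\,\ovl{\cM}^\bal\,u_\nu=\ovl{\cR}_{\nu-\mu}$ — so $\ovl{\cM}^{\bal_T}\cong\ovl{\cM}^\bal\oti\cQ=\cM^\al\oti\bigl(B(\ell^2)\oti\cQ\bigr)$, that is, $B(\ell^2)\oti\cM^{\al_T}\cong\cM^\al\oti B(\ell^2)$. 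In particular $\cM^{\al_T}$ and $\cM^\al$ have the same Murray--von Neumann type, which already yields the type halves of (2) and (3) and reduces (1) to showing $\cM^\al$ is not of type III$_0$.

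For (1), suppose first that $\cM$ is of type III$_\la$ with $\la<1$; then the product state $\vph$ is periodic, the eigenvalue ratios of its density operator lying in a cyclic subgroup of $\R_{>0}$, so $\si^\vph_{T_0}=\id_\cM$ for its period $T_0>0$. Since $\vph$ is $\al_T$-invariant, hence $\al_{w_0t}$-invariant for each $t$, and $E_{\al_T}:=\int_T\al_{w_0t}\,dt$ is a $\vph$-preserving normal conditional expectation onto $\cM^{\al_T}$, Takesaki's theorem makes $\cM^{\al_T}$ globally $\si^\vph$-invariant; hence $\cM^{\al_T}$ carries the periodic faithful normal state $\vph|_{\cM^{\al_T}}$ and is not of type III$_0$. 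If instead $\cM$ is of type III$_1$, the same conclusion follows at the level of flow of weights: $\cR$, being stably isomorphic to the injective ITPFI factor $\cM$, is not of type III$_0$, and the minimal action $\be$, being invariantly approximately inner (Theorem~\ref{thm:beta-prod}), has trivial Connes--Takesaki module, so $\cM^\al=\cR^\be$ has the same flow of weights as $\cR$. Combined with the stable isomorphism this completes (1) and the type statements of (2) and (3).

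It remains to treat the stability assertion in (2), which is the delicate point. Here one uses that $\cM$ is injective — an infinite tensor product of the injective factors $B(H_v)$ — so that $\cM^\al=\cR^\be$ is the injective type III$_\la$ factor, together with the invariant approximate innerness and minimality of $\be$. Via the induction picture $\cM\cong\Ind_T^{G_q}\cR$ (Theorem~\ref{thm:induction}) and the $2\times 2$ matrix trick used repeatedly above, an $\al$-cocycle is pulled back to a $\be$-cocycle on $\cR$; the cocycle cohomology of an invariantly approximately inner minimal action of $T$ on an injective type III$_\la$ factor ($\la\neq0$) vanishes by the classification/cohomology-vanishing results of \cite{MT}, so this $\be$-cocycle — and hence the original $\al$-cocycle — is a coboundary. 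The main obstacle in the whole argument is exactly this last step, setting up the cocycle correspondence cleanly and invoking the cohomology-vanishing machinery; the stable-isomorphism computation and the modular arguments that pin down the types are routine by comparison.
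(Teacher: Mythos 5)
Your first step --- the explicit unitary $\mathcal{V}=\sum_\mu u_\mu\oti e_\mu$ conjugating $\ovl{\cM}^{\bal}\oti\cQ$ onto $\ovl{\cM}^{\bal_T}$ --- is correct and is in effect an explicit coboundary for the cocycle $1\oti U_t$ relating $\al_{w_0t}$ to $\be_t\oti\id_\cQ$; the paper gets the same conclusion $\cM^{\al_T}\cong\cM^\al\oti\cQ$ by quoting the stability of $\al_T$ from \cite[Proposition 5.2 (4)]{Iz-can}. This settles (3) and the type statement in (2). The rest of your argument, however, has two genuine gaps.

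For (1), in the case $\cM$ of type III$_\la$ with $\la<1$, you conclude that $\cM^{\al_T}$ is not of type III$_0$ because it carries the periodic faithful normal state $\vph|_{\cM^{\al_T}}$. This implication is false: a factor admits a faithful normal state with $\si^\ph_{T_0}=\id$ exactly when $T_0\in T(\cM)$, and there are type III$_0$ factors (e.g.\ Krieger factors whose flow of weights has nontrivial point spectrum) with $T(\cM)\neq\{0\}$, hence with periodic faithful normal states. What is needed, and what the paper supplies, is the additional fact that $(\cM^{\al_T})_\vph'\cap\cM^{\al_T}=\C$ --- obtained from the action of the infinite symmetric group $\mathfrak{S}_\infty$ permuting tensor factors, which commutes with both $\al_T$ and $\si^\vph$ and acts with trivial tail --- so that Connes' theorem gives $\Ga(\si^\vph|_{\cM^{\al_T}})=\Sp(\si^\vph|_{\cM^{\al_T}})$ and the modular spectrum is read off directly. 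Your type III$_1$ case is also only a sketch (the identification of the flow of weights of $\cM^\al\rti_\th P$ with that of $\cM^\al$ needs the module of $\th$, not of $\be$, and a proper outerness hypothesis on the core), whereas the paper's centralizer argument covers all cases uniformly with no case division.

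For the stability claim in (2), your proposal to ``pull back an $\al$-cocycle to a $\be$-cocycle'' and invoke cohomology vanishing from \cite{MT} is not set up: there is no canonical map from $\al$-cocycles (valued in $\cM\oti\lG$) to $\be$-cocycles (valued in $\cR\oti L^\infty(T)$), and even granting one, coboundary-ness would not obviously transfer back. The intended argument is much more direct and uses neither injectivity nor the classification results of \cite{MT}: given an $\al$-cocycle $w$, apply the $2\times2$ matrix trick exactly as in the proof of Theorem \ref{thm:descript-coc}; since $\cM\rti_\al G_q$ is a factor, so is $(M_2(\C)\oti\cM)^{\id\oti\al$ perturbed$}$, and the hypothesis that $\cM^\al$ is of type III makes the two corner projections properly infinite, hence equivalent, producing a unitary $u$ with $\al(u)=(u\oti1)w$.
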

\begin{proof}
(1).
It is clear that the canonical action
of the infinite symmetric group $\mathfrak{S}_\infty$
is commuting not only $\al_T$ but $\si^\vph$,
where $\vph$ is the product state with respect to $\ph$.
Therefore,
$(\cM^{\al_T})_\vph'\cap\cM^{\al_T}=\C$,
and $\Ga(\si^\vph|_{\cM^{\al_T}})=\Sp(\si^\vph|_{\cM^{\al_T}})$.
This shows that $\cM^{\al_T}$ is not of type III$_0$.

(2).
By \cite[Proposition 5.2 (4)]{Iz-can},
$\al_T$ is stable.
This implies that $\al_t$ is conjugate to $\be_{w_0t}$,
and $\cM^{\al_T}\cong \cR^\be=\cM^\al$.
The stability of $\al$ is shown
by using 2$\times$2-matrix trick.

(2).
If $\cM^\al=\cR^\be$ were of type III,
then so would $\cM^{\al_T}$
since there exists a normal conditional expectation
from $\cM^{\al_T}$ onto $\cM^\al$.
This is a contradiction.
\end{proof}

Theorem \ref{thm:beta-prod}
enables us to classify
some product type actions of $G_q$.

\begin{cor}
A product type action $\al$
is unique up to conjugacy
if $\cM^\al$ is of type III$_1$.
More precisely,
such $\al$ is conjugate to
$\Ind_T^{G_q}(\id_{\cR_\infty}\oti\, m)$,
where $\cR_\infty$ denotes the injective 
type III$_1$ factor and $m$ the minimal
action of $T$ on the type II$_1$
injective factor $\cR_0$.
\end{cor}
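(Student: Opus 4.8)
The plan is to combine Theorem \ref{thm:induction} with a uniqueness theorem for minimal actions of the maximal torus on the injective type III$_1$ factor. By Theorem \ref{thm:induction}, $\al$ is conjugate to $\Ind_T^{G_q}\be$, where $\be$ is the minimal action of $T$ on $\cR$ with $\cR^\be=\cM^\al$. Since $\cM$ is an ITPFI factor it is injective; hence $\cM^\al=E_\al(\cM)$ is injective, and $\cR$, which carries a normal conditional expectation onto $\cM^\al$ coming from the splitting $\cM=\cR\oti\cQ$, is injective as well. By hypothesis $\cM^\al$ is of type III$_1$, so by the uniqueness of the injective type III$_1$ factor $\cM^\al\cong\cR_\infty$. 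Moreover, by Theorem \ref{thm:depth2-torus} one has $B(\ell^2)\oti\cR\cong B(\ell^2)\oti(\cM^\al\rti_\th\widehat{T})$ with $\th$ an outer action of $\widehat{T}=P$, whence $\cR\cong\cM^\al\rti_\th\widehat{T}$; since the flow of weights of a type III$_1$ factor is trivial, this crossed product is again a factor of type III$_1$, and it is injective because $\widehat{T}$ is amenable. Thus $\cR\cong\cR_\infty$ too.

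Next I would set up the model action. One has $\cR_\infty\oti\cR_0\cong\cR_\infty$, and $\id_{\cR_\infty}\oti m$ is an action of $T$ on it whose fixed point algebra is $\cR_\infty\oti\cR_0^m$; its relative commutant is $(\cR_\infty'\cap\cR_\infty)\oti(\cR_0^m{}'\cap\cR_0)=\C$ because $m$ is minimal, so $\id_{\cR_\infty}\oti m$ is minimal, and since $\cR_0^m\cong\cR_0$ its fixed point algebra is $\cR_\infty\oti\cR_0\cong\cR_\infty$, again the injective type III$_1$ factor. It is invariantly approximately inner: the minimal action $m$ of $T$ on the injective type II$_1$ factor $\cR_0$ is invariantly approximately inner (standard for minimal compact-group actions on $\cR_0$), and tensoring with the trivial action on $\cR_\infty$ preserves this. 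On the other side, $\be$ is invariantly approximately inner by Theorem \ref{thm:beta-prod}.

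Thus $\be$ and $\id_{\cR_\infty}\oti m$ are two minimal, invariantly approximately inner actions of the compact abelian group $T$ on the injective type III$_1$ factor, with isomorphic fixed point algebras. By the classification of such actions of Masuda--Tomatsu \cite{MT} they are conjugate; if the cited theorem is stated only up to an automorphism $f\in\Aut(T)$, one absorbs it using $m\circ f\approx m$ (uniqueness of minimal $T$-actions on $\cR_0$), so that $\be\approx\id_{\cR_\infty}\oti m$ genuinely as $T$-actions. Applying $\Ind_T^{G_q}$, which preserves conjugacy of $T$-actions, we get $\al\approx\Ind_T^{G_q}\be\approx\Ind_T^{G_q}(\id_{\cR_\infty}\oti m)$; in particular any two faithful product type actions with type III$_1$ fixed point algebra are mutually conjugate. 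The main obstacle is to quote the classification theorem for minimal (invariantly approximately inner) actions of $T$ on the injective type III$_1$ factor in a form that yields genuine conjugacy and to check that $\be$ and the model satisfy its hypotheses; a secondary technical point is the identification of $\cR$ as an injective factor \emph{of type III$_1$} (not merely factorial) via its crossed-product description and the triviality of the flow of weights.
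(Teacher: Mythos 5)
Your overall strategy --- reduce via Theorem \ref{thm:induction} to the uniqueness of the minimal action $\be$ of $T$ on $\cR$ with $\cR^\be=\cM^\al\cong\cR_\infty$, then match $\be$ against the model $\id_{\cR_\infty}\oti m$ --- is the same reduction the paper makes, and your identification of $\cM^\al$ with $\cR_\infty$ via injectivity is fine. The gap is in the step you yourself flag as the main obstacle: the classification theorem you invoke does not exist in the form you need. The results of Masuda--Tomatsu \cite{MT} that are used in this paper (Theorem 4.11, Theorem 6.28, Proposition 6.34) classify \emph{flows}, i.e.\ actions of $\R$ or of the one-dimensional torus $\R/2\pi\Z$; here $T$ is the $n$-torus for a general $G_q$, so there is no off-the-shelf statement giving conjugacy of two minimal, invariantly approximately inner $T$-actions on $\cR_\infty$. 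Moreover, even granting such a statement, it would naturally yield only cocycle conjugacy, and your proposed repair (absorbing cocycles and an automorphism of $T$ using uniqueness of $m$ on $\cR_0$) again presupposes a classification of $n$-torus actions that has to be proved.

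The paper closes exactly this gap by dualizing instead of working with the compact group directly: since $\cR^\be=\cM^\al$ is of type III$_1$, $\be$ is the dual of an outer action $\th^{-1}$ of the discrete group $\widehat{T}=P\cong\Z^n$ on $\cR^\be\cong\cR_\infty$ (Theorem \ref{thm:depth2-torus}); each $\th_\mu$ is approximately inner because $\Aut(\cR_\infty)=\oInt(\cR_\infty)$ \cite{KST}, and $\th$ is centrally free by \cite[Theorem 4.11]{MT}, so Ocneanu's uniqueness theorem for centrally free actions of discrete amenable groups \cite{Ocn} gives that $\th$ is unique up to cocycle conjugacy; taking duals then gives uniqueness of $\be$ up to genuine conjugacy, which is what you want. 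So your proof becomes correct once you replace the appeal to a (nonexistent) direct classification of $T^n$-actions by this duality argument --- which is, in effect, the only known route, and is what the paper does. Your auxiliary observations (injectivity and type III$_1$ of $\cR$, minimality of $\id_{\cR_\infty}\oti m$, compatibility of $\Ind_T^{G_q}$ with conjugacy) are correct and consistent with the paper, though the paper does not need most of them.
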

\begin{proof}
Let $\be$ be the associated minimal action on $\cR$.
Then $\cR^\be=\cM^\al$ is of type III$_1$.
It follows
that $\be$ is a dual action of an outer action
$\th^{-1}$ on $\cR^\be$.
Then $\th_\mu$ for each $\mu\in \widehat{T}$
is approximately inner
since $\Aut(\cR_\infty)=\oInt(\cR_\infty)$
\cite[Theorem 1]{KST}.
By \cite[Theorem 4.11]{MT},
$\th$ has the Rohlin property, that is,
the central freeness.
Thus $\th$ is unique up to cocycle conjugacy
\cite[Theorem 1.4, p.7]{Ocn}.
This implies the uniqueness of $\be$
up to conjugacy.
\end{proof}

\begin{exam}
We will construct a model of a product type action
whose fixed point algebra is of type III$_1$.
As a result,
it turns out that
$\Ind_T^{G_q}(\id_{\cR_\infty}\oti\, m)$
is indeed of product type.

Take an $n$-dimensional unitary representation
$v$ of $G_q$
such that the matrix elements
$v_{ij}$ generate $C(G_q)$.
Then we set the $(n+3)$-dimensional representation
$w:=1^{\oplus3}\oplus v$.
Let $\la,\mu>0$ such that $\la/\mu\nin\Q$.
We introduce $\Ad w$-invariant state $\ph$
defined by
the normalisation of $\Tr_k$,
where
$k$ denotes the diagonal matrix
$\diag(1,\la,\mu,F_v)$.

By Corollary \ref{cor:beta-fixed-type},
it suffices to show that
$\cM^{\al_T}$ is of type III$_1$.
It follows from the proof of Corollary \ref{cor:beta-fixed-type}
that $\Ga(\si^\vph|_{\cM^{\al_T}})=\Sp(\si^\vph|_{\cM^{\al_T}})$.
By construction of $w$,
it turns out that
$\log\la,\log\mu\in\Sp(\si^\vph|_{\cM^{\al_T}})$.
Thus $\cM^{\al_T}$ is of type III$_1$.
\end{exam}

When the fixed point algebra is of another type,
it seems that the general classification
is complicated.
So, let us treat $SU_q(2)$ in what follows.
Our main ingredient is the complete invariant
treated in \cite[Theorem 6.28]{MT}.
Note that two actions of the torus $\R/2\pi\Z$
are cocycle conjugate if and only if
so are they as $\R$-actions.

We now suppose that $\al$ is a product type action of $SU_q(2)$
and $v$ a finite dimensional representation.
To compute the invariant,
we give a parametrization of $v$ and $\ph$ as follows.
The irreducible representations of $SU_q(2)$
are parametrized by $\Z_+\om_1$,
or equivalently, the half spins $(1/2)\Z_+$.
Let us decompose $v$
into the direct sum of irreducible representations
as follows:
\[
v
=
\bigoplus_{\nu\in(1/2)\Z_+}
\bigoplus_{k=1}^{m_\nu}
C^\nu,
\]
where $m_\nu$ denotes the multiplicity of
$C^\nu$ in $v$.
Under identification of $T=\R/2\pi\Z$,
we have
\[
v_t
=
\bigoplus_{\nu\in(1/2)\Z_+}
\bigoplus_{k=1}^{m_\nu}
\diag(e^{2\nu it},e^{(2\nu-2)it},\dots,e^{-2\nu it})
\quad
\mbox{for }
t\in\R.
\]

Changing the orthonormal base of each intertwiner space
if necessary,
we may and do assume that $\ph$
is the normalization of $\Tr_{k_\ph}$,
where $k_\ph$ is defined as
\[
k_\ph=
\bigoplus_{\nu\in(1/2)\Z_+}
\bigoplus_{k=1}^{m_\nu}
c_k^\nu
\diag
(q^{2\nu},q^{2\nu-2},\dots,q^{-2\nu}),
\quad
\mbox{for some }
c_k^\nu>0.
\]
From the faithfulness of $\al$,
$v$ has at least one non-integer-spin representation
and at least one integer.
Thus we may assume that $c_k^\nu=1$ for a fixed even $\nu$
and $k$.
Note that the density matrix
$\diag(q^{2\nu},q^{2\nu-2},\dots,q^{-2\nu})$
contains $1$ as its spectrum for any integer-spin $\nu$.

Then the invariant $G_{{\bm\la},{\bm\mu}}$
stated in \cite[Theorem 6.28]{MT}
is computed as follows:
\[
G_{\al_T}
:=
\langle
(\log (c_k^\nu q^{\ell}),\ell)
\mid
\ell=2\nu,2\nu-2,\dots,-2\nu,
\
k=1,\dots,m_\nu,
\
\nu\in(1/2)\Z_+
\rangle,
\]
which is a closed subgroup of $\R^2$.
Since there exists $\nu\in(1/2)+\Z_+$ with $m_\nu>0$,
$G_{\al_T}$ can be written as the following form:
\begin{equation}
\label{eq:invariant-G}
G_{\al_T}
=
\langle
(\log c_k^{\nu_e},0),
(2\log c_k^{\nu_o},0),
(\log (c_k^{\nu_o}q),1)
\mid
k,
\
\nu_e\in\Z_+,
\
\nu_o\in 1/2+\Z_+
\rangle
\end{equation}

\begin{thm}
If $G_q=SU_q(2)$, and $\cM^\al$ is of type II,
then $\cM^\al$ and $\cM$ must be of type II$_1$ and III$_q$,
respectively.
Moreover,
$\al$ is conjugate to the induction
of the torus action $\si_{t/\log q}^{\vph_q}$,
where $\vph_q$ denotes the Powers state
on the Powers factor $\cR_q$ of type III$_q$.
\end{thm}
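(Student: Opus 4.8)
The plan is to push the whole analysis through the restricted torus action $\al_T$, whose complete invariant $G_{\al_T}$ has just been computed in (\ref{eq:invariant-G}), and then to feed the outcome into Theorems \ref{thm:induction} and \ref{thm:beta-prod}. Throughout, $T=\R/2\pi\Z$, and we use the parametrisation of $v$ and of $\ph=\Tr_{k_\ph}/\Tr(k_\ph)$ fixed above; faithfulness of $\al$ supplies an integer spin (so $0\in\Wt(v)$) and a half-integer spin $\nu_o$.

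\emph{Step 1 (reduction to $\cM^{\al_T}$, and $\cM^{\al_T}$ of type ${\rm II}_1$).} By the splitting $\cM=\cR\oti\cQ$, Lemma \ref{lem:restriction} and Remark \ref{rem:unitary-impl}, the action $\al_T$ is the tensor product of $\al_T|_{\cR}$ (whose fixed point algebra is $\cR^\be=\cM^\al$, since $w_0t=t^{-1}$ on $T$) with $\al_T|_{\cQ}$, and the latter is implemented by unitaries lying in the type I factor $\cQ$; hence $\cM^{\al_T}\cong\cM^\al\oti D$ with $D$ of type I, so $\cM^\al$ is of type II iff $\cM^{\al_T}$ is. Assume this. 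The commuting $\mathfrak{S}_\infty$-action gives $(\cM^{\al_T})_\vph'\cap\cM^{\al_T}=\C$, as in the proof of Corollary \ref{cor:beta-fixed-type}(1), so $\cM^{\al_T}$ and its $\vph$-centralizer are both factors; since $\si^\vph=\bigotimes_m\Ad k_\ph^{it}$ commutes with $\al_T$ it leaves $\cM^{\al_T}$ globally invariant, and by semifiniteness $\si^\vph_t|_{\cM^{\al_T}}=\Ad h^{it}$ for a positive self-adjoint $h$ affiliated with the $\vph$-centralizer and central in it; as that centralizer is a factor, $h$ is a scalar. Thus $\vph|_{\cM^{\al_T}}$ is a trace and $\cM^{\al_T}$ is of type ${\rm II}_1$; then so is $\cM^\al$, via the normal conditional expectation $\cM^{\al_T}\to\cM^\al$ together with the fact that $\cM^\al$ is never of type I (proof of Lemma \ref{lem:Hilb-embed}).

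\emph{Step 2 ($c_k^\nu=1$, and $\cM$ of type ${\rm III}_q$).} Step 1 yields $\si^\vph_t|_{\cM^{\al_T}}=\id$, hence the group of $\si^\vph$-eigenvalues realised on $\cM^{\al_T}$ is trivial. A routine computation (comparing two spin blocks of equal parity, and bridging two half-integer blocks through the normalised integer block) identifies this group with the multiplicative form of $G_{\al_T}\cap(\R\times\{0\})$ from (\ref{eq:invariant-G}), which vanishes exactly when all $c_k^\nu=1$; alternatively this is the type reading of \cite[Theorem 6.28]{MT}. With all $c_k^\nu=1$, the density matrix of $\ph$ has eigenvalue list $\{q^m\mid m\in\Wt(v)\}$; since $\Wt(v)$ contains $0$ and the consecutive weights $2\nu_o,2\nu_o-2$ with $2\nu_o$ odd, the subgroup of $\Z$ generated by their differences is all of $\Z$, so $S(\cM)=\{0\}\cup q^{\Z}$ and $\cM$ is of type ${\rm III}_q$. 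In particular $\cM^\al$ is the injective type ${\rm II}_1$ factor $\cR_0$, and since $\cQ$ is of type ${\rm I}_\infty$, $\cR=\cQ'\cap\cM$ is the injective type ${\rm III}_q$ factor, i.e.\ $\cR\cong\cR_q$.

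\emph{Step 3 (identification of $\al$).} By Theorem \ref{thm:induction} it suffices to identify $(\cR,\be)$, up to the $(\zeta,f)$-ambiguity allowed there, with $(\cR_q,\si^{\vph_q}_{t/\log q})$. After the stabilisation of Theorem \ref{thm:depth2-torus}, $\cR$ is $\cR_{0,\infty}\rtimes_\th\Z$ with $\th$ outer and $\be=\hat\th$; as $\cR$ is of type ${\rm III}_q$, $\th$ scales the trace of the injective type ${\rm II}_\infty$ factor by $q$, and such a trace-scaling automorphism, hence the minimal $T$-action $\be$, is unique up to conjugacy by Connes' uniqueness of the discrete decomposition of the injective type ${\rm III}_q$ factor. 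The reparametrised modular flow $t\mapsto\si^{\vph_q}_{t/\log q}$ of the Powers factor $\cR_q$ is of exactly this type: its fixed point algebra is the centralizer $(\cR_q)_{\vph_q}\cong\cR_0$, which has trivial relative commutant, every torus weight occurs in it, and on the stabilisation it is dual to the trace-scaling (by $q$) automorphism; the orientation ambiguity $f\in\Aut(T)=\{\pm1\}$ is harmless because reversing time replaces $\vph_q$ by a Powers state of reciprocal ratio, giving an isomorphic system. Hence $(\cR,\be)\cong(\cR_q,\si^{\vph_q}_{t/\log q})$, and $\al$ is conjugate to $\Ind_T^{G_q}\cR_q$ with this torus action. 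The step I expect to be the main obstacle is this last one: pinning $\be$ down to the specific model $\si^{\vph_q}_{t/\log q}$ — fixing the reparametrisation constant $\log q$ and disposing of the orientation — requires Connes' uniqueness of the discrete (trace-scaling) decomposition of $\cR_q$ together with the cocycle-conjugacy data of Theorem \ref{thm:beta-prod}; the bookkeeping in Step 2 matching the attained modular eigenvalues with $G_{\al_T}\cap(\R\times\{0\})$ is the other delicate point.
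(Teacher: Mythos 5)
Your skeleton is close to the paper's (restrict to the torus, force $c_k^\nu=1$ via the invariant (\ref{eq:invariant-G}), invoke Connes' classification), and your middle section is a genuinely different and rather economical shortcut: you extract $c_k^\nu=1$ from the traciality of $\vph$ on the fixed point algebra and then read off type ${\rm III}_q$ from the Araki--Woods eigenvalue list $\{q^m\mid m\in\Wt(v)\}$. The paper instead first proves $\cR$ is of type ${\rm III}_\la$ for some $\la$ using \cite[Proposition 5.2 (5)]{Iz-can}, matches $G_{\al_T}$ against the invariant $G_\la=\Z(\log\la,\mp1)$ of the model flows via \cite[Proposition 6.34]{MT} and \cite[Theorem 6.28]{MT}, and obtains $c_k^\nu=1$ and $\la=q$ simultaneously, with the ${\rm II}_1$ statement falling out at the end. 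Two of your steps, however, have genuine gaps.

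In Step 1 you transfer the type II hypothesis from $\cM^\al$ to $\cM^{\al_T}$ by asserting $\cM^{\al_T}\cong\cM^\al\oti D$ with $D$ of type I. The fixed point algebra of a tensor product action is not the tensor product of the fixed point algebras (it is a sum of products of spectral subspaces), and Corollary \ref{cor:beta-fixed-type} only supplies the opposite implication. This is repairable: run your centralizer argument on $\cM^\al$ itself ($\si^\vph$ preserves $\cM^\al$ since $E_\al$ is $\vph$-preserving, and $(\cM^\al)_\vph'\cap\cM^\al=\C$ by the same $\mathfrak{S}_\infty$ argument), so $\vph|_{\cM^\al}$ is a trace; the attained $\si^\vph$-eigenvalues on $\cM^\al$, coming from intertwiners between copies of a common irreducible constituent of $C^{\nu_1}\oti\cdots\oti C^{\nu_m}$ and $C^{\nu_1'}\oti\cdots\oti C^{\nu_m'}$, already generate the group $\langle c_k^{\nu_e},\,(c_k^{\nu_o})^2,\,c_k^{\nu_o}/c_{k'}^{\nu_o'}\rangle$, so $c_k^\nu=1$ follows without ever comparing weight vectors inside $\cM^{\al_T}$.

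More seriously, Step 3 does not reach the stated conclusion. Connes' theorem gives the trace-scaling automorphism $\th$ only up to outer (cocycle) conjugacy, hence the dual actions are conjugate on the \emph{stabilized} algebra $\ovl{\cR}=B(\ell^2)\oti\cR$; since $\cR^\be=\cM^\al$ is of type ${\rm II}_1$ --- not properly infinite --- you cannot simply strip off the $B(\ell^2)$, and conjugacy after stabilization is a priori weaker than conjugacy of $\be$ with $\si^{\vph_q}_{t/\log q}$ on $\cR$. You flag this as the main obstacle but do not resolve it. The paper's resolution is elementary once $c_k^\nu=1$ is known: then $v_t=k_\ph^{it/\log q}$, so $\al_t=\si^\vph_{t/\log q}$ on the nose, hence by Lemma \ref{lem:restriction} and the splitting $\vph=\vph|_\cR\oti\vph|_\cQ$ one has $\be_t=\si^{\vph|_\cR}_{-t/\log q}$ exactly; adjusting a Connes--Takesaki module then yields an isomorphism $\zeta\col\cR\to\cR_q$ with $\vph|_\cR=\ps_q\circ\zeta$, which conjugates the two modular flows, and the orientation ambiguity is absorbed by an automorphism of $T$ as you suggest. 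Replacing your abstract uniqueness argument by this observation closes the gap.
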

\begin{proof}
Let $\tr$ be the tracial weight on $\cR^\be$.
Then by \cite[Proposition 5.2 (5)]{Iz-can},
$\{\si_t^{\ta\circ E_\al|_{\cR}}\}_{t\in\R}$ is contained
in $\{\be_t\}_{t\in \R/2\pi\Z}$.
In particular,
$\si^{\ta\circ E_\al|_{\cR}}$ is periodic,
and $\cR$ is of type III$_\la$ for some $0<\la<1$.
We will show that $\la$ must be equal to $q$.

By \cite[Proposition 6.34]{MT},
$\be_t$ is cocycle conjugate to $\si_{t/\log\la}^{\ps_\la}$
or $\si_{-t/\log\la }^{\ps_\la}$,
where $\ps_\la$ denotes the Powers state
on the Powers factor $\cR_\la$.
From Theorem \ref{thm:beta-prod},
we have
$\al_t\sim\be_{-t}\sim \si_{\mp t/\log\la }^{\ps_\la}$,
and their invariants introduced in \cite[Section 6.5]{MT} coincide.
The invariant of $\si_{\mp t/\log\la }^{\ps_\la}$
equals
$G_\la=\Z(\log\la,\mp 1)$.
It follows immediately from (\ref{eq:invariant-G})
that $c_k^\nu=1$ for all $\nu$ and $k$,
and $\la=q$.
Hence we have $\be_t\sim\si_{-t/\log\la }^{\ps_\la}$
and $\al_t=\si_{t/\log q}^{\vph}$ for $t\in\R$.
So, $\cM^{\al_T}=\cM_\vph$ is of type II$_1$.

We will show that $\be_t$ is in fact conjugate to
$\si_{-t/\log q}^{\ps_q}$.
Employing Lemma \ref{lem:restriction},
we have $\be_t=\al_{-t}=\si_{-t/\log q}^{\vph}=\si_{-t/\log q}^{\vph|_\cR}$
on $\cR$.
Note that $\cR\cong\cR_q$ and $\vph$ and $\ps_q$ are periodic states.
Then by adjusting a Connes--Takesaki module,
there exists an isomorphism $\zeta\col\cR\ra\cR_q$ such that
$\vph|_\cR=\ps_q\circ\zeta$.
Thus $\be_t\approx\si_{-t/\log q}^{\ps_q}$.
It is not so difficult to show that
the induction of an action is stable
with respect to an automorphism of $T$,
and
we have
$\al\approx\Ind_T^{G_q}\si_{-t/\log q}^{\ps_q}
\approx\Ind_T^{G_q}\si_{t/\log q}^{\ps_q}$.
\end{proof}

\begin{exam}
Let $v$ be the direct sum of the spin-0 and 1/2 irreducible representations.
Namely, a unitary $v$ has the following form:
\[
v=
\begin{pmatrix}
1&0&0\\
0&x&u\\
0&v&y
\end{pmatrix}
\in
M_3(\C)\oti C(SU_q(2))
=
M_3(C(SU_q(2))),
\]
where $x,u,v$ and $y$ are the canonical generators
of $C(SU_q(2))$ as a C$^*$-algebra
(see \cite{MM} or Section \ref{subsect:class-irr-Soibelman}).

Now we set the following density matrix:
\[
k_\ph
=
\begin{pmatrix}
1&0&0\\
0&q&0\\
0&0&q^{-1}
\end{pmatrix}
.
\]
where $\Tr$ denotes the canonical non-normalized
trace of $M_3(\C)$.
Let $\vph$ be the product state of $\ph$ as usual.
Then $\al_t=\si_{-t/\log q}^\vph$ for $t\in\R/2\pi\Z$,
and $\cM^{\al_T}$ is of type II$_1$.
Thus so is $\cM^\al$.
\end{exam}

The remaining case is when $\cM^\al$ is of type III$_\la$
with $0<\la<1$.
The infiniteness of $\cR^\be$ implies that
the crossed product decomposition of $\cR$,
that is, $\cR=\cR^\be\rti_\th\Z$.
Recall that $\be_t=\hat{\th}_{e^{-it}}$ for $t\in\R/2\pi\Z$.
Since $\be$ is invariantly approximately inner,
$\th$ is centrally free.

Let $\mo(\th)$ be the Connes--Takesaki module of $\th$ \cite{CT}.
Identifying the flow space of $\cR^\be$ with $(\la,1]=\R_{>0}/\la^\Z$,
we may assume that $\la\leq\mo(\th)<1$.
Let $\mu:=\mo(\th)$.
Thanks to the classification of $\Z$-actions,
(see \cite[Theorem 1, Corollary 6, p.385]{Co}
or \cite[Theorem 1.13, p.311]{Ta-III}),
$\th$ is cocycle conjugate to $\id_{\cR_\la}\oti \th^\mu$,
where $\th^\mu$ denotes the automorphism
on the injective type II$_\infty$ factor $\cR_{0,1}$
with $\tr\circ\th^\mu=\mu\tr$.
Thus $\cR\cong \cR_\la\oti\cR_\mu$
and
$\be_t\approx\id_{\cR_\la}\oti\si_{t/\log\mu}^{\vph_\mu}$.
So, the invariant of $\be$ is computed as follows:
\begin{equation}
\label{eq:Glamu}
G_{\la,\mu}
:=
\Z(\log\la,0)+\Z(\log\mu,1).
\end{equation}

Note that we can replace $\mu$ with $\la\mu$,
that is,
$G_{\la,\la\mu}=G_{\la,\mu}$.
This shows that
$\id_{\cR_\la}\oti\si_{t/\log\mu}^{\vph_\mu}$
is (cocycle) conjugate to
$\id_{\cR_\la}\oti\si_{t/\log(\la\mu)}^{\vph_{\la\mu}}$.

\begin{thm}
If $G_q=SU_q(2)$, and $\cM^\al$ is of type III$_\la$
with $0<\la<1$,
then $\mo(\th)=q$ or $\la^{1/2}q$ in $\R_{>0}/\la^\Z$.
In each case,
$\al$ is unique up to conjugacy.
\end{thm}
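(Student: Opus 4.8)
\noindent
The plan is to split the statement into (i) a computation pinning $\mo(\th)$ down to two possible values, and (ii) a uniqueness assertion which, once $\mo(\th)$ is fixed, follows formally from the classification of $\Z$-actions on injective factors together with Theorem \ref{thm:induction}. I keep the notation set up just before the statement: since $\cR^\be=\cM^\al$ is an infinite injective factor of type III$_\la$ with $0<\la<1$, we have $\cR=\cR^\be\rti_\th\Z$, the action $\th$ is centrally free (as $\be$ is invariantly approximately inner, Theorem \ref{thm:beta-prod}), $\be_t=\hat{\th}_{e^{-it}}$, and, writing $\mu:=\mo(\th)$ regarded in $\R_{>0}/\la^\Z$, one has $\be_t\approx\id_{\cR_\la}\oti\si_{t/\log\mu}^{\vph_\mu}$ with invariant $G_{\la,\mu}=\Z(\log\la,0)+\Z(\log\mu,1)$ in the sense of \cite[Theorem 6.28]{MT}, subject to $G_{\la,\la\mu}=G_{\la,\mu}$.

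First I would compute $\mu$ by comparing invariants. For $SU_q(2)$ we have $w_0=s_1$ and $w_0\om_1=-\om_1$, so under the identification $T=\R/2\pi\Z$ the element $w_0t$ is $-t$, and Theorem \ref{thm:beta-prod} says that $\be$ is cocycle conjugate, as an $\R/2\pi\Z$-action, to $t\mapsto\al_{-t}$. Hence its invariant $G_{\la,\mu}$ agrees, up to the time-reversal automorphism $(s,n)\mapsto(s,-n)$ of $\R^2$, with the invariant $G_{\al_T}$ of $\al_T$ displayed in $(\ref{eq:invariant-G})$. That automorphism fixes the slice $\{n=0\}$, so the $n=0$ parts coincide: $\langle\log c_k^{\nu_e},\,2\log c_k^{\nu_o}\rangle=\log\la\cdot\Z$, whence $c_k^{\nu_e}\in\la^\Z$ and $c_k^{\nu_o}\in\la^{(1/2)\Z}$. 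By faithfulness of $\al$ some half-integer spin occurs, so the level-$\pm1$ generators of $(\ref{eq:invariant-G})$ are the elements $(\log(c_k^{\nu_o}q),1)$, which are pairwise congruent modulo $\log\la\cdot\Z$ since their differences lie in the $n=0$ slice. Matching these with the level-$1$ generator $(\log\mu,1)$ of $G_{\la,\mu}$, and absorbing the time reversal together with the $\la^\Z$-periodicity, I obtain $\mu\equiv c_k^{\nu_o}q$ in $\R_{>0}/\la^\Z$, so $\mu\in q\la^{(1/2)\Z}$, i.e. $\mo(\th)=q$ or $\la^{1/2}q$ in $\R_{>0}/\la^\Z$.

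For the uniqueness, fix $\la$ and one of the two values $\mu=\mo(\th)$; then $G_{\la,\mu}$ is determined. By completeness of the invariant of \cite[Theorem 6.28]{MT} for invariantly approximately inner torus actions on injective factors, $\be$ is determined up to cocycle conjugacy; equivalently, $\th$ is a centrally free $\Z$-action on the injective type III$_\la$ factor $\cM^\al$ with Connes--Takesaki module $\mu$, and such an action is unique up to cocycle conjugacy by the classification of $\Z$-actions on injective factors (\cite[Theorem 1, Corollary 6, p.385]{Co}, \cite[Theorem 1.13, p.311]{Ta-III}). Since cocycle conjugate $\Z$-actions have conjugate dual actions, $\be=\hat\th$ is determined up to conjugacy, and then Theorem \ref{thm:induction}, which matches faithful product type actions of $G_q$ with minimal actions of $T$ up to conjugacy via induction, forces $\al$ to be unique up to conjugacy.

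I expect the main obstacle to be the bookkeeping in comparing the two presentations of the invariant of \cite{MT}: correctly tracking the time reversal coming from $w_0$, the periodicity $G_{\la,\la\mu}=G_{\la,\mu}$ (equivalently, the normalization of $\mo(\th)$), and the identification of the $n=0$ slice with $\log\la\cdot\Z$. Once those are settled, extracting $\{q,\la^{1/2}q\}$ from $(\ref{eq:invariant-G})$ is a short arithmetic step, and the uniqueness half is a formal consequence of the cited classification results and Theorem \ref{thm:induction}.
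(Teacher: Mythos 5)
Your determination of $\mo(\th)$ is essentially the paper's own proof: both arguments identify the invariant of the torus action computed in (\ref{eq:invariant-G}) with $G_{\la,\mu}$ from (\ref{eq:Glamu}), extract $c_k^{\nu_o}\in\la^{(1/2)\Z}$ from the level-$0$ slice and $\mu\in q\,c_k^{\nu_o}\la^{\Z}$ from the level-$\pm1$ slice, and conclude $\mo(\th)=q$ or $\la^{1/2}q$ in $\R_{>0}/\la^\Z$. The paper states the identification bluntly as $G_{\al_T}=G_{\la,\mu}$, leaving implicit the time reversal coming from $w_0t=-t$ and the compensating conventions ($\be_t=\hat{\th}_{e^{-it}}$, the normalization $\la\leq\mo(\th)<1$); this is exactly the bookkeeping you flag, and neither treatment resolves it in more detail than the other. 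The one genuine difference is that the paper's printed proof stops after computing $\mo(\th)$ and says nothing about the uniqueness assertion, whereas you supply the missing argument: the classification of centrally free $\Z$-actions on the injective type III$_\la$ factor by their Connes--Takesaki module gives $\th$ up to cocycle conjugacy, cocycle conjugate $\Z$-actions have conjugate dual actions, so $\be$ is determined up to conjugacy, and Theorem \ref{thm:induction} transports this to conjugacy of $\al$. That chain is correct and is the natural completion of the proof; it also sidesteps the explicit Connes--Takesaki module adjustment that the paper uses in the type II case to upgrade cocycle conjugacy of $\be$ to conjugacy, which is a mild simplification.
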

\begin{proof}
In this case,
we have $G_{\al_T}=G_{\la,\mu}$.
By (\ref{eq:invariant-G}) and (\ref{eq:Glamu}),
we $c_k^{\nu_o}\in\la^{m_k}$ with $m_k\in(1/2)\Z_+$
and
$\mu\in q c_k^{\nu_o} \la^\Z$.
Hence $\mu=q\la^n$ for some $n\in(1/2)\Z_+$,
and $\mo(\th)=q$ or $\la^{1/2}q$ in $\R_{>0}/\la^\Z$.
\end{proof}

\begin{exam}
Let $0<\la<1$ and $\vep\in\{0,1/2\}$.
Set $v$ and $k_\ph$ as follows:
\[
v
=
\begin{pmatrix}
1&0&0&0\\
0&1&0&0\\
0&0&x&u\\
0&0&v&y
\end{pmatrix}
,
\quad
k_\ph
=
\begin{pmatrix}
1&0&0&0\\
0&\la&0&0\\
0&0&\la^\vep q&0\\
0&0&0&\la^\vep q^{-1}
\end{pmatrix}
.
\]
Then we can see
$G_{\al_T}=G_{\la,\la^\vep q}$ by direct calculation.

So, if $\mu=\la^k q<1$ with $k$ a half integer,
then $\Ind_{T}^{G_q}(\id_{\cR_\la}\oti \si_{t/\log\mu}^{\vph_\mu})$
is of product type and falls into two categories.
\end{exam}

The following result is a direct consequence of
the previous theorem.

\begin{cor}
Let $G_q=SU_q(2)$ and $0<\la<1$.
Suppose that $\mu$ satisfies $0<\mu<1$
and $\mu/q\nin (\la^{1/2})^{\Z_+}$.
Then the induced action
$\Ind_{T}^{G_q}(\id_{\cR_\la}\oti \si_{t/\log\mu}^{\vph_\mu})$
is not of product type.
In particular,
for any $0<\la<1$,
there exist uncountably many, non-product type,
mutually non-cocycle conjugate actions
of $SU_q(2)$ on the injective type III$_1$ factor
with fixed point factor of type III$_\la$.
\end{cor}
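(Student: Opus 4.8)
The plan is to derive the first assertion directly from the preceding theorem, arguing by contradiction. Suppose $\Ind_T^{G_q}(\id_{\cR_\la}\oti\si_{t/\log\mu}^{\vph_\mu})$ were of product type, i.e.\ conjugate to a product type action $\al$ of $SU_q(2)$ on an infinite tensor product factor $\cM$. By the definition of induction, its fixed point algebra is $*$-isomorphic to the fixed point algebra of $\id_{\cR_\la}\oti\si^{\vph_\mu}$ on $\cR_\la\oti\cR_\mu$, namely $\cR_\la\oti(\cR_\mu)_{\vph_\mu}$; since the Powers state $\vph_\mu$ is periodic, $(\cR_\mu)_{\vph_\mu}$ is a semifinite factor, so this is a factor of type III$_\la$. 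Hence the preceding theorem applies to $\al$, and the automorphism $\th$ attached to the minimal torus action $\be$ on $\cR$ (recall $\be_t=\hat\th_{t^{-1}}$) satisfies $\mo(\th)\in\{q,\la^{1/2}q\}$ in $\R_{>0}/\la^\Z$.

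On the other hand, since $\Ind_T^{G_q}(\id_{\cR_\la}\oti\si^{\vph_\mu}_{t/\log\mu})$ is $G_q$-equivariantly isomorphic to $\cM$, the uniqueness clause of Theorem \ref{thm:induction}, applied with $\cN=\cR_\la\oti\cR_\mu$, provides a topological automorphism $f$ of $T=\R/2\pi\Z$ such that $\id_{\cR_\la}\oti\si^{\vph_\mu}_{t/\log\mu}$ is conjugate to $\be_{f(t)}$. As $\Aut(\R/2\pi\Z)=\{\pm\id\}$, this forces the Connes--Takesaki module $\mo(\th)$ to equal $\mu$ or $\mu^{-1}$ in $\R_{>0}/\la^\Z$, and comparing with the previous paragraph one reads off that $\mu/q$ must lie in $(\la^{1/2})^{\Z_+}$; here one uses that the exponent produced inside the proof of the preceding theorem ($\mu=q\la^n$ with $n\in(1/2)\Z_+$) is non-negative, so the correct representative in the flow space $(\la,1]$ is the one with non-negative exponent. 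Since this contradicts the hypothesis $\mu/q\nin(\la^{1/2})^{\Z_+}$, the induced action is not of product type.

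For the last sentence, fix $0<\la<1$ and let $N$ be the set of $\mu\in(0,1)$ with $\mu/q\nin(\la^{1/2})^{\Z_+}$ and $\log\mu/\log\la\nin\Q$; removing these two countable exceptional sets leaves $N$ uncountable. For $\mu\in N$ write $\al^\mu:=\Ind_T^{G_q}(\id_{\cR_\la}\oti\si^{\vph_\mu}_{t/\log\mu})$. Its ambient factor is $\cM\cong\cR\oti\cQ$ with $\cQ$ of type I and $\cR\cong\cR_\la\oti\cR_\mu$; by Connes' tensor product criterion ($\log\la/\log\mu\nin\Q$) this is the injective type III$_1$ factor, and its fixed point algebra is of type III$_\la$ as noted in the first paragraph. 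By the first part no $\al^\mu$ is of product type. Finally, if $\al^\mu$ and $\al^{\mu'}$ were cocycle conjugate, then so would be the associated minimal torus actions $\id_{\cR_\la}\oti\si^{\vph_\mu}$ and $\id_{\cR_\la}\oti\si^{\vph_{\mu'}}$, hence by \cite[Theorem 6.28]{MT} the invariants $G_{\la,\mu}=\Z(\log\la,0)+\Z(\log\mu,1)$ and $G_{\la,\mu'}$ would coincide, which happens only for $\mu'$ in the countable set $\{\mu^{\pm1}\la^k\mid k\in\Z\}$. Thus $N$ yields uncountably many mutually non-cocycle-conjugate, non-product-type actions of $SU_q(2)$ on the injective type III$_1$ factor with fixed point factor of type III$_\la$.

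I expect the genuinely delicate step to be the module bookkeeping in the second paragraph: $\mo(\th)$ is a priori only defined modulo $\la^\Z$ and, because $\Aut(T)=\{\pm\id\}$, only up to inversion, so one must translate the conclusion $\mo(\th)\in\{q,\la^{1/2}q\}$ of the preceding theorem carefully into exactly the stated congruence $\mu/q\in(\la^{1/2})^{\Z_+}$. All the substantial analytic input — factoriality of $\lTG$, Theorem \ref{thm:induction}, and the classification of torus actions by the $\R^2$-valued invariant of \cite{MT} — is already in hand, so this is the only point requiring real attention.
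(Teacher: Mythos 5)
Your overall strategy is the intended one: the paper offers no argument beyond the sentence ``a direct consequence of the previous theorem,'' and your route --- contradiction via the type III$_\la$ theorem combined with the uniqueness clause of Theorem \ref{thm:induction} and the invariant $G_{\la,\mu}$ of \cite{MT} --- is exactly how that consequence is meant to be drawn. The third paragraph (uncountability, type III$_1$ via Connes' criterion, separation by $G_{\la,\mu}$) is fine.

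The gap is precisely at the step you yourself flag as delicate, and your proposed resolution does not close it. From $\mo(\th)\in\{q,\la^{1/2}q\}$ in $\R_{>0}/\la^\Z$ and $\mu\equiv\mo(\th)^{\pm1}\pmod{\la^\Z}$ (the $\pm$ coming from $\Aut(T)=\{\pm\id\}$, and it cannot be discarded: the paper itself uses that induction is stable under automorphisms of $T$, so $\Ind_T^{G_q}(\be_{-t})\approx\Ind_T^{G_q}(\be_t)$ and both orientations genuinely occur), all you obtain is $\mu\in q^{\pm1}\la^{(1/2)\Z}$. This is a countable set, but it is strictly larger inside $(0,1)$ than the set $q\,(\la^{1/2})^{\Z_+}$ excluded by the stated hypothesis. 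Your appeal to ``the non-negative exponent of the flow-space representative'' does not repair this: the representative of $\mo(\th)$ in $(\la,1]$ is unique, but the set of $\mu\in(0,1)$ congruent to it modulo $\la^\Z$ still contains elements of the form $q\la^{k/2}$ with $k<0$. Concretely, if $q<\la$ then $\mu:=q\la^{-1}\in(0,1)$ satisfies $\mu/q=\la^{-1}\nin(\la^{1/2})^{\Z_+}$, yet $G_{\la,\la\mu}=G_{\la,\mu}$ shows $\id_{\cR_\la}\oti\si^{\vph_\mu}_{t/\log\mu}$ is conjugate to the $\mu=q$ model, whose induction \emph{is} of product type by the Example preceding the corollary. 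So the deduction ``$\mu/q$ must lie in $(\la^{1/2})^{\Z_+}$'' is false as stated; what your argument actually proves is the first assertion under the (correct) hypothesis $\mu\nin q^{\pm1}\la^{(1/2)\Z}$. This is an imprecision inherited from the corollary's own formulation rather than a defect of your strategy, and since the exceptional set remains countable, the second (``uncountably many'') assertion is untouched; but as a proof of the literal first assertion the second paragraph does not go through and should instead state the corrected exceptional set explicitly.
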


\section{Related problems}
Let $\bG$ be a compact quantum group
and $\bK$ the maximal quantum subgroup
of Kac type introduced in
\cite[App. A]{Sol} and \cite[Definition 4.6]{T-Poisson}.
We would like to generalize
Dijkhuizen-Stokman's result stated
in Section \ref{subsect:irreducible}.

\begin{prob}
\label{prob:irred-maximal}
Does the following equality hold?
\[
\Irr(C(\bK\backslash\bG))
=
\{
\pi|_{C(\bK\backslash\bG)}
\mid
\pi\in\Irr(C(\bG))
\},
\]
where $\Irr(A)$ denotes
the equivalence classes
of irreducible representation
of a ${\rm C}^*$-algebra $A$.
\end{prob}

\begin{prob}
Is the counit a unique character
on $C(\bK\backslash\bG)$?
\end{prob}

We will remark on this problem.
Let $\Ga$ be the set of characters on $C(\bG)$.
Then it is probably well-known for experts
that $\Ga$ is a compact group
that is regarded as a quantum subgroup
of $\bG$.
The maximality of $\bK$ implies that
$C(\bK\backslash\bG)\subs C(\Ga\backslash\bG)$.
In particular, the restriction
of every element of $\Ga$
on $C(\bK\backslash\bG)$ gives a counit.
Thus if Problem \ref{prob:irred-maximal} is solved,
this problem holds.

\begin{prob}
$\Aut_{\bG}(C(\bK\backslash\bG))=\{\id\}$?
\end{prob}

If $\bG$ is a compact group,
then $\bK=\bG$.
So these problems are trivial.
We will explain
why the last problem seems plausible.
Let $G$ be a compact group and $H$ a closed subgroup
of $G$.
Then $\Aut_G(C(H\backslash G))$ is isomorphic to
$N_G(H)/H$, where $N_G(H)$ denotes the normalizer group
of $H$.
If there exists a non-trivial $g\in N_G(H)$,
then $H$ and $g$ generate a closed subgroup
larger than $H$.
Hence the maximality of $\bK$
would imply the triviality of
$\Aut_{\bG}(C(\bK\backslash\bG))$.

In the last section,
in order to show that
a faithful product type action of $G_q$ is induced
from a minimal action of the maximal torus $T$,
we have exploited
the representation theory of $G_q$ and $C(G_q)$
to a full.
We would like to obtain this
in a more conceptual way.

\begin{prob}
Let $\bG$ be a co-amenable compact quantum group
with commutative fusion rules
and $\bK$ the maximal quantum subgroup of Kac type.
Then is any faithful product type action of $\bG$
induced from a minimal action of $\bK$?
\end{prob}

\end{document}